\long\def\symbolfootnote[#1]#2{\begingroup%
\def\thefootnote{\fnsymbol{footnote}}\footnote[#1]{#2}\endgroup}
\qed\vspace{5pt}}
\newtheoremstyle{lause}
{5pt}
{5pt}
{\slshape}
{\parindent}
{\bfseries}
{.}
{.5em}
{}
\theoremstyle{lause}
\newtheoremstyle{maaritelma}
{5pt}
{5pt}
{\rmfamily}
{\parindent}
{\bfseries}
{.}
{.5em}
{}
\theoremstyle{maaritelma}
\newtheoremstyle{lause}
{5pt}
{5pt}
{\slshape}
{\parindent}
{\bfseries}
{.}
{.5em}
{}
\theoremstyle{lause}
\newtheorem{theorem}{Theorem}[section]
\newtheorem{lemma}[theorem]{Lemma}
\newtheorem{corollary}[theorem]{Corollary}
\newtheoremstyle{maaritelma}
{5pt}
{5pt}
{\rmfamily}
{\parindent}
{\bfseries}
{.}
{.5em}
{}
\theoremstyle{maaritelma}
\newtheorem{definition}[theorem]{Definition}
\newtheorem{example}[theorem]{Example}
\newtheorem{remark}[theorem]{Remark}
\DeclareMathOperator*{\essinf}{ess\,inf}
\numberwithin{equation}{section}
\begin{document}

\thispagestyle{empty}

\begin{center}

{\large{\textbf{Weighted minimum $\alpha$-Green energy problems}}}

\vspace{18pt}

\textbf{Natalia Zorii}

\vspace{18pt}

\emph{In memory of Bent Fuglede (1925\,--\,2023) }\vspace{8pt}

\footnotesize{\address{Institute of Mathematics, Academy of Sciences
of Ukraine, Tereshchenkivska~3, 02000, Kyiv, Ukraine\\
natalia.zorii@gmail.com }}

\end{center}

\vspace{12pt}

{\footnotesize{\textbf{Abstract.} For the $\alpha$-Green kernel $g^\alpha_D$ on a domain $D\subset\mathbb R^n$, $n\geqslant2$, associated with the $\alpha$-Riesz kernel $|x-y|^{\alpha-n}$, where $\alpha\in(0,n)$ and $\alpha\leqslant2$, and a relatively closed set $F\subset D$, we investigate the problem on minimizing the Gauss functional
\[\int g^\alpha_D(x,y)\,d(\mu\otimes\mu)(x,y)-2\int g^\alpha_D(x,y)\,d(\vartheta\otimes\mu)(x,y),\]
$\vartheta$ being a given positive (Radon) measure concentrated on $D\setminus F$, and $\mu$ ranging over all probability measures of finite energy, supported in $D$ by $F$. For suitable $\vartheta$, we find necessary and/or sufficient conditions for the existence of the solution to the problem, give a description of its support, provide various alternative characterizations, and prove convergence theorems when $F$ is approximated by partially ordered families of sets. The analysis performed is substantially based on the perfectness of the $\alpha$-Green kernel, discovered by Fuglede and Zorii (Ann.\ Acad.\ Sci.\ Fenn.\ Math., 2018).
}}
\symbolfootnote[0]{\quad 2010 Mathematics Subject Classification: Primary 31C15.}
\symbolfootnote[0]{\quad Key words: Minimum $\alpha$-Green energy problems with external fields; perfectness of $\alpha$-Green kernels; $\alpha$-Riesz and $\alpha$-Green balayage; $\alpha$-Green equilibrium measure; $\alpha$-thinness of a set at infinity; $\alpha$-harmonic measure on the one-point compactification of $\mathbb R^n$.
}

\vspace{6pt}

\markboth{\emph{Natalia Zorii}} {\emph{Weighted minimum $\alpha$-Green energy problems}}

\section{General conventions and preliminaries}\label{sec1} Fix an (open, connected) domain $D\subset\mathbb R^n$, $n\geqslant2$, a  relatively closed set $F\subset D$, $F\ne D$, and a number $\alpha\in(0,n)$, $\alpha\leqslant2$. This paper deals with minimum $\alpha$-Green energy problems in the presence of external fields over positive (Radon) measures $\mu$ on $D$ with $\mu(D)=1$, supported (in $D$) by $F$, known as the Gauss variational problem (cf.\ e.g.\ Ohtsuka \cite{O}, pertaining to general kernels on a locally compact space).

The $\alpha$-Green kernel $g^\alpha_D(x,y)$, $x\in D$, $y\in\mathbb R^n$, is related to the $\alpha$-Riesz kernel $\kappa_\alpha(x,y):=|x-y|^{\alpha-n}$ ($|x-y|$ being the Euclidean distance between $x,y\in\mathbb R^n$) via the $\alpha$-Riesz balayage of $\varepsilon_x$, the unit Dirac measure at $x\in D$, onto the set $Y:=\mathbb R^n\setminus D$.
The present work is concerned with analytic aspects of the theory of $g^\alpha_D$-pot\-en\-t\-ials, initiated by Frostman \cite{Fr} and Riesz \cite{R} and developed further by Fuglede and Zorii \cite{FZ}, Landkof \cite{L}, and Zorii \cite{Z-AMP} (in case $\alpha=2$, see e.g.\ the monographes by Armitage and Gardiner \cite{AG}, Brelot \cite{Brelo1,Br}, and Doob \cite{Doob}); some details of this theory can also be found below. For the probabilistic counterpart of $g^\alpha_D$-pot\-en\-t\-ial theory we refer to the monographes by Bliedtner and Hansen \cite{BH}, Bogdan, Byczkowski, Kulczycki, Ryznar, Song, and Vondra\v{c}ek \cite{PA}, and Doob \cite{Doob} (see also numerous references therein).

To formulate results related to both $\alpha$-Riesz and $\alpha$-Green kernels, it is convenient to begin with some basic concepts of the theory of potentials with respect to general function kernels $\kappa$ on a locally compact space $X$.

\subsection{Basic concepts of the theory of potentials on a locally compact space}\label{ssec1} For any locally compact (Hausdorff) space $X$, we denote by $\mathfrak M(X)$ the linear space of all (real-valued Radon) measures $\mu$ on $X$, equipped with the so-called {\it vague} ({\it =weak\,$^*$}) topology, i.e.\ the topology of pointwise convergence of the class $C_0(X)$ of all continuous functions $\varphi:X\to(-\infty,\infty)$ with compact support, and by $\mathfrak M^+(X)$ the cone of all positive $\mu\in\mathfrak M(X)$, where $\mu$ is {\it positive} if and only if $\mu(\varphi)\geqslant0$ for every positive $\varphi\in C_0(X)$. A measure $\mu\in\mathfrak M^+(X)$ is said to be {\it bounded} if $\mu(X)<\infty$.

The reader is expected to be familiar with principal concepts of the theory of measures and integration on a locally compact space. For its exposition we refer to Bourbaki \cite{B2} or Edwards \cite{E2}; see also Fuglede \cite{F1} for a brief survey.

For the purposes of this study, it is enough to assume $X$ to have a {\it countable} base of open sets. Then it is
{\it $\sigma$-com\-p\-act} (that is, representable as a countable union of compact sets \cite[Section~I.9, Definition~5]{B1}), see \cite[Section~IX.2, Corollary to Proposition~16]{B3}; and hence
negligibility is the same as local negligibility \cite[Section~IV.5, Corollary~3 to Proposition~5]{B2}. Furthermore, then every measure $\mu\in\mathfrak M(X)$ has a {\it countable} base of vague neighborhoods \cite[Lemma~4.4]{Z23b}, and therefore any vaguely bounded (hence vaguely relatively compact, cf.\ \cite[Section~III.1, Proposition~15]{B2}) set $\mathfrak B\subset\mathfrak M(X)$ has a {\it sequence} $(\mu_j)\subset\mathfrak B$ that converges vaguely to some $\mu_0\in\mathfrak M(X)$.

A {\it kernel\/} on $X$ is thought to be a symmetric, lower semicontinuous (l.s.c.) function $\kappa:X\times X\to[0,\infty]$. Given $\mu,\nu\in\mathfrak M(X)$, the {\it mutual energy} and the {\it potential} with respect to the kernel $\kappa$ are defined by means of the equalities
\begin{align*}
  I_\kappa(\mu,\nu)&:=\int\kappa(x,y)\,d(\mu\otimes\nu)(x,y),\\
  U_\kappa^\mu(x)&:=\int\kappa(x,y)\,d\mu(y),\quad x\in X,
\end{align*}
respectively, provided the value on the right is well defined as a finite number or $\pm\infty$. For $\mu=\nu$, the mutual energy $I_\kappa(\mu,\nu)$ defines the {\it energy} $I_\kappa(\mu,\mu)=:I_\kappa(\mu)$ of $\mu$. For more details about these definitions, see Fuglede \cite[Section~2.1]{F1}.

A kernel $\kappa$ is said to be {\it strictly positive definite} if $I_\kappa(\mu)\geqslant0$ for every (signed) $\mu\in\mathfrak M(X)$, and moreover $I_\kappa(\mu)=0\iff\mu=0$. Then all (signed) $\mu\in\mathfrak M(X)$ of finite energy (i.e., with $I_\kappa(\mu)<+\infty$) form a pre-Hilbert space $\mathcal E_\kappa:=\mathcal E_\kappa(X)$ with the inner product $\langle\mu,\nu\rangle_\kappa:=I_\kappa(\mu,\nu)$ and the energy norm $\|\mu\|_\kappa:=\sqrt{I_\kappa(\mu)}$, see \cite[Section~3.1]{F1}. The topology on $\mathcal E_\kappa$ defined by means of this norm is said to be {\it strong}.

Furthermore, a strictly positive definite kernel $\kappa$ is said to satisfy the {\it consistency principle} (or to be {\it perfect}) if the cone $\mathcal E^+_\kappa:=\mathcal E^+_\kappa(X):=\mathcal E_\kappa\cap\mathfrak M^+(X)$ is {\it complete} in the induced strong topology, and moreover the strong topolology on $\mathcal E^+_\kappa$ is {\it finer} than the vague topology on $\mathcal E^+_\kappa$, see Fuglede \cite[Section~3.3]{F1}. Thus, if a kernel $\kappa$ is perfect, then any strongly Cauchy sequence (net) $(\mu_j)\subset\mathcal E^+_\kappa$ converges {\it both strongly and vaguely} to one and the same measure $\mu_0\in\mathcal E^+_\kappa$, the strong topology on $\mathcal E_\kappa$ as well as the vague topology on $\mathfrak M(X)$ being Hausdorff.

For any $A\subset X$, we denote by $\mathfrak M^+(A;X)$ the set of all $\mu\in\mathfrak M^+(X)$ {\it concentrated on} $A$, which means that $A^c:=X\setminus A$ is $\mu$-negligible, or equivalently that $A$ is $\mu$-mea\-s\-ur\-ab\-le and $\mu=\mu|_A$, $\mu|_A$ being the trace of $\mu$ to $A$, cf.\ \cite[Section~V.5.7]{B2}. For any $\mu\in\mathfrak M^+(A;X)$, then necessarily $\mu(X)=\mu(A)$. Also note that if $A$ is closed, then $\mu\in\mathfrak M^+(A;X)$ if and only if $S(\mu;X)\subset A$, where $S(\mu;X)$ is the support of $\mu$ in $X$.

Let $\mathfrak M^+(A,q;X)$, $q>0$, consist of all $\mu\in\mathfrak M^+(A;X)$ with $\mu(X)=q$, and let
\[
\mathcal E^+_\kappa(A):=\mathcal E_\kappa\cap\mathfrak M^+(A;X),\quad
\mathcal E^+_\kappa(A,q):=\mathcal E_\kappa\cap\mathfrak M^+(A,q;X).
\]

All the sets, to appear in this paper, are {\it Borel}, while all the kernels are {\it perfect}. By virtue of \cite[Theorem~3.5]{Z-EM} (cf.\ \cite[Theorem~4.5]{F1}), all those sets are therefore {\it capacitable}, and hence their {\it inner} and {\it outer capacities} coincide:
\[\underline{c}_\kappa(A)=\overline{c}_\kappa(A)=:c_\kappa(A).\]
(For the theory of inner and outer capacities of arbitrary sets in a locally compact space endowed with a perfect kernel, we refer to Fuglede \cite{F1}, cf.\ also Zorii \cite{Z-arx-22}.)

Thus,\footnote{As usual, the infimum over the empty set is interpreted as $+\infty$. We also agree that $1/0=+\infty$ and $1/(+\infty)=0$.}
\begin{equation}\label{cap-def}
c_\kappa(A):=\Bigl[\inf_{\mu\in\mathcal E^+_\kappa(A,1)}\,\|\mu\|_\kappa^2\Bigr]^{-1},
\end{equation}
and therefore (cf.\ \cite[Lemma~2.3.1]{F1})
\begin{equation}\label{ifff}
c_\kappa(A)=0\iff\mathcal E^+_\kappa(A)=\{0\}.
\end{equation}
Alternatively (see \cite[Theorem~4.1]{F1} and \cite[Theorem~6.1]{Z-arx-22}),
\begin{equation}\label{cap-def1}
c_\kappa(A)=\inf_{\mu\in\Gamma^+_\kappa(A)}\,\|\mu\|_\kappa^2,
\end{equation}
where $\Gamma^+_\kappa(A)$ consists of all $\mu\in\mathcal E^+_\kappa$ such that $U_\kappa^\mu\geqslant1$ q.e.\ ({\it quasi-everywhere}) on $A$ (that is, on all of $A$ except for a subset of capacity zero).\footnote{It is used here that
for every $\mu\in\mathfrak M^+(X)$, $U_\kappa^\mu$ is l.s.c.\ on $X$ (see \cite[Lemma~2.2.1(c)]{F1}), and therefore $U_\kappa^\mu$ is $\nu$-measurable for every $\nu\in\mathfrak M^+(X)$, cf.\ \cite[Section~IV.5, Corollary~3 to Theorem~2]{B2}.}

If $c_\kappa(A)<\infty$, then the infimum in  (\ref{cap-def1}) is attained at the unique $\gamma_{A,\kappa}\in\Gamma^+_\kappa(A)$, called the {\it $\kappa$-cap\-ac\-i\-tary distribution} for $A$ (Fuglede \cite[Theorem~4.1]{F1}).\footnote{Unless $\mathcal E^+_\kappa(A)$ is strongly closed (which occurs, in particular, if $A$ is closed or even quasiclosed, see \cite[Definition~2.1]{F71} and \cite[Theorem~3.1]{Z-EM} for details), $\gamma_{A,\kappa}$ is {\it not} necessarily concentrated on $A$, and hence problem (\ref{cap-def}) is in general unsolvable.} If moreover Frostman's maximum principle holds, then $\gamma_{A,\kappa}$ is the only measure in $\Gamma^+_\kappa(A)$ with
\begin{equation}\label{eqpot}
U^{\gamma_{A,\kappa}}_\kappa=1\quad\text{q.e.\ on $A$};
\end{equation}
in this case, $\gamma_{A,\kappa}$ is also referred to as the {\it $\kappa$-equilibrium measure} for $A$. ({\it Frostman's maximum principle} means that for any $\mu\in\mathfrak M^+(X)$ with $U^\mu_\kappa\leqslant c_\mu$ $\mu$-a.e., where $c_\mu\in(0,\infty)$, the same inequality holds on all of $X$. See e.g.\ Ohtsuka \cite[p.~143]{O}.)

All the kernels, to appear in this paper, satisfy the Frostman and {\it domination maximum principles}, where the latter means that for any $\mu,\nu\in\mathfrak M^+(X)$ such that both $U_\kappa^\mu<\infty$ and $U_\kappa^\mu\leqslant U_\kappa^\nu$ hold true $\mu$-a.e., we have
$U_\kappa^\mu\leqslant U_\kappa^\nu$ on all of $X$.

A measure $\mu\in\mathfrak M^+(X)$ is said to be {\it $c_\kappa$-absolutely continuous} if $\mu(K)=0$ for every compact set $K\subset X$ with $c_\kappa(K)=0$. Each $\mu\in\mathfrak M^+(X)$ of finite $\kappa$-energy is certainly $c_\kappa$-absolutely continuous, cf.\ (\ref{ifff}), but not the other way around. With regard to the latter, see the example in Landkof \cite[pp.~134--135]{L}, pertaining to the Newtonian kernel $\kappa_2(x,y):=|x-y|^{2-n}$ on $\mathbb R^n$, $n\geqslant3$.

In the rest of this subsection, a set $A\subset X$ is {\it closed}. Then the cone $\mathfrak M^+(A;X)$ is vaguely closed \cite[Section~III.2, Proposition~6]{B2}, which entails, by use of the perfectness of the kernel $\kappa$, that the subcone
$\mathcal E^+_\kappa(A)$ of the strongly complete cone $\mathcal E^+_\kappa$ is {\it strongly closed}, whence {\it strongly complete}. By utilizing Edwards \cite[Theorem~1.12.3]{E2}, this implies, in turn, that for every $\lambda\in\mathcal E^+_\kappa$, there exists the unique {\it orthogonal projection} $\lambda_\kappa^A$ of $\lambda$ onto $\mathcal E^+_\kappa(A)$, minimizing $\|\lambda-\mu\|_\kappa$ over all $\mu\in\mathcal E^+_\kappa(A)$. This $\lambda_\kappa^A$ actually serves as the {\it balayage} of $\lambda\in\mathcal E^+_\kappa$ onto the set $A$, for, according to \cite[Theorem~4.3, Corollary~4.5]{Z22}, it is uniquely characterized within $\mathcal E^+_\kappa(A)$ by means of the equality
\begin{equation*}
U_\kappa^{\lambda_\kappa^A}=U_\kappa^\lambda\quad\text{q.e.\ on $A$}.
\end{equation*}

\begin{theorem}\label{th-comp} Given a closed set $A$ and a perfect kernel $\kappa$, assume $c_\kappa(A)<\infty$.
Then for any $q\in(0,\infty)$, $\mathcal E^+_\kappa(A,q)$ is complete in the induced strong topology.
\end{theorem}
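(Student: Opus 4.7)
The plan is to deduce the theorem from the strong completeness of $\mathcal E^+_\kappa(A)$, which was just established in the paragraph preceding the statement. Concretely, I would show that $\mathcal E^+_\kappa(A,q)$ is a strongly closed subset of $\mathcal E^+_\kappa(A)$; as a strongly closed subset of a strongly complete space it is then strongly complete. This reduces the matter to proving that the linear ``total mass'' functional $L(\mu):=\mu(X)$ is strongly continuous on $\mathcal E^+_\kappa(A)$, since $\mathcal E^+_\kappa(A,q)=L^{-1}(\{q\})\cap\mathcal E^+_\kappa(A)$.

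The key idea is to represent $L$ as an inner product against the $\kappa$-equilibrium measure $\gamma_{A,\kappa}$. Since $c_\kappa(A)<\infty$, this measure exists and lies in $\mathcal E^+_\kappa$. By equation (\ref{eqpot}) it satisfies $U_\kappa^{\gamma_{A,\kappa}}=1$ q.e.\ on $A$, while Frostman's maximum principle (valid for $\kappa$ by assumption) forces $U_\kappa^{\gamma_{A,\kappa}}\leqslant 1$ on all of $X$. For any $\mu\in\mathcal E^+_\kappa(A)$, finiteness of energy makes $\mu$ $c_\kappa$-absolutely continuous, so $U_\kappa^{\gamma_{A,\kappa}}=1$ $\mu$-a.e.\ on $A$; because $\mu$ is concentrated on $A$, this identity extends to $\mu$-a.e.\ on $X$, and integration yields
\[
\langle\gamma_{A,\kappa},\mu\rangle_\kappa=\int U_\kappa^{\gamma_{A,\kappa}}\,d\mu=\mu(X)=L(\mu).
\]

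Since the inner product on $\mathcal E_\kappa$ is strongly continuous in each variable, the displayed identity shows that $L$ is strongly continuous on $\mathcal E^+_\kappa(A)$. The preimage $L^{-1}(\{q\})$ is therefore strongly closed in $\mathcal E^+_\kappa(A)$, so $\mathcal E^+_\kappa(A,q)$ inherits strong completeness from $\mathcal E^+_\kappa(A)$, completing the argument.

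The main obstacle, as I see it, is that the naive route --- taking a strong Cauchy sequence $(\mu_j)\subset\mathcal E^+_\kappa(A,q)$, extracting a strong-and-vague limit $\mu_0\in\mathcal E^+_\kappa(A)$ via perfectness, and trying to recover $\mu_0(X)=q$ from vague semicontinuity of mass --- only delivers the inequality $\mu_0(X)\leqslant q$, since mass can escape to infinity under vague limits. The insight of using $\gamma_{A,\kappa}$ to \emph{linearize} the mass constraint into a strongly continuous inner-product condition is what circumvents this, and the hypothesis $c_\kappa(A)<\infty$ is used precisely to guarantee the existence of $\gamma_{A,\kappa}$.
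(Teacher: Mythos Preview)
Your argument is correct and is the standard route to this result: pair the mass constraint against the equilibrium measure $\gamma_{A,\kappa}$ to realize $\mu\mapsto\mu(X)$ as the strongly continuous linear functional $\langle\gamma_{A,\kappa},\cdot\rangle_\kappa$ on $\mathcal E^+_\kappa(A)$, whence $\mathcal E^+_\kappa(A,q)$ is strongly closed in the strongly complete cone $\mathcal E^+_\kappa(A)$. The paper itself does not supply a proof but simply cites \cite[Theorem~7.1]{Z-EM}, so there is nothing to compare against here beyond noting that your argument matches the natural one.

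One minor remark: you invoke Frostman's maximum principle to get $U_\kappa^{\gamma_{A,\kappa}}\leqslant1$ on $X$, but that global bound is never used. What your argument actually consumes is the \emph{equality} $U_\kappa^{\gamma_{A,\kappa}}=1$ q.e.\ on $A$, i.e.\ (\ref{eqpot}), and that identity is what the paper derives under Frostman. So your reliance on Frostman is in the right place but for a slightly different reason than stated. This is harmless in context, since the paper declares just before the theorem that all kernels under consideration satisfy Frostman's principle; your proof is therefore fully valid for the paper's purposes, though it may be narrower than the cited external result if the latter treats perfect kernels without any maximum principle.
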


\begin{proof}
  See Zorii \cite[Theorem~7.1]{Z-EM}. (This certainly no longer holds if $c_\kappa(A)=\infty$.)
\end{proof}

Let $\infty_X$ be the Alexandroff point of a locally compact space $X$ \cite[Section~I.9.8]{B1}.

\subsection{$\alpha$-Riesz balayage}\label{ssec2} In this and the following two subsections, $X:=\mathbb R^n$ and $\kappa:=\kappa_\alpha$, where $n$, $\alpha$, and $\kappa_\alpha$ are as indicated at the top of Section~\ref{sec1}. When speaking of $\nu\in\mathfrak M^+(\mathbb R^n)$, we always understand that its $\kappa_\alpha$-potential $U_{\kappa_\alpha}^\nu$ is not identically infinite on $\mathbb R^n$, which according to Landkof \cite[Section~I.3.7]{L} occurs if and only if
\begin{equation}\label{intf}
\int_{|y|>1}\frac{d\nu(y)}{|y|^{n-\alpha}}<\infty.
\end{equation}
Then, actually, $U_{\kappa_\alpha}^\nu$ is finite q.e.\ on $\mathbb R^n$ \cite[Section~III.1.1]{L}. Note that (\ref{intf}) necessarily holds if $\nu$ is either bounded, or of finite $\kappa_\alpha$-energy $I_{\kappa_\alpha}(\nu)$; with regard to the latter, see \cite[Corollary to Lemma~3.2.3]{F1} applied to $\kappa_\alpha$, the kernel $\kappa_\alpha$ being strictly positive definite according to Riesz \cite[Section~I.4, Eq.~(13)]{R}.

Moreover, $\kappa_\alpha$ is perfect and meets the Frostman and domination maximum principles. See Deny \cite[p.~121, Corollary~(a)]{D1}, Doob \cite[Theorem~1.V.10(b)]{Doob}, Landkof \cite[Lemma~1.3, Theorems~1.10, 1.29]{L}; cf.\ Section~\ref{ssec1} above for definitions.

Referring to Bliedtner and Hansen \cite{BH} (resp.\ Zorii \cite{Z-bal,Z-bal2}) for a general theory of outer (resp.\ inner) $\kappa_\alpha$-bal\-ay\-age of any $\nu\in\mathfrak M^+(\mathbb R^n)$ to any $Q\subset\mathbb R^n$, in this study we limit ourselves to {\it closed} $Q$; this limitation will generally not be repeated henceforth.

Let $Q^r$ denote the set of all {\it $\alpha$-regular} points of $Q$; by the Wiener type criterion \cite[Theorem~5.2]{L},
\begin{equation*}x\in Q^r\iff\sum_{j\in\mathbb N}\,\frac{c_{\kappa_\alpha}(Q_j)}{q^{j(n-\alpha)}}=\infty,\end{equation*}
where $q\in(0,1)$ and $Q_j:=Q\cap\bigl\{y\in\mathbb R^n:\ q^{j+1}<|x-y|\leqslant q^j\bigr\}$. The set $Q^r$ is Borel measurable \cite[Theorem~5.2]{Z-bal2}, while $Q\setminus Q^r$, the set of all {\it $\alpha$-ir\-reg\-ul\-ar} points of $Q$, is of $\kappa_\alpha$-capacity zero (the Kel\-logg--Ev\-ans type theorem, see \cite[Theorem~6.6]{Z-bal}).

\begin{definition}\label{def-bal}Given a (closed) set $Q\subset\mathbb R^n$, fix $\xi\in\mathfrak M^+(\mathbb R^n)$, $\xi\ne0$, such that $\xi|_{Q^r}$ is $c_{\kappa_\alpha}$-absolutely continuous. The {\it $\kappa_\alpha$-balayage} of $\xi$ onto $Q$ is the unique $c_{\kappa_\alpha}$-absolutely continuous measure $\xi_{\kappa_\alpha}^Q\in\mathfrak M^+(Q;\mathbb R^n)$ having the property
\begin{equation}\label{bal1}
U_{\kappa_\alpha}^{\xi_{\kappa_\alpha}^Q}=U_{\kappa_\alpha}^\xi\quad\text{on\ $Q^r$}\quad\text{(hence, q.e.\ on $Q$)}.
\end{equation}
\end{definition}

Regarding the existence and uniqueness of this $\xi_{\kappa_\alpha}^Q$, see Zorii \cite[Corollary~5.2]{Z-bal2}. (Compare with Fuglede and Zorii \cite[Corollary~3.19]{FZ}, pertaining to $\xi\in\mathfrak M^+(Q^c;\mathbb R^n)$. Being based on Theorem~3.17 of the same paper, it however had a gap in its proof; see Remark~\ref{compare} below for details.)

In the rest of this subsection, $\xi$ and $\xi_{\kappa_\alpha}^Q$ are as indicated in Definition~\ref{def-bal}. Since $\xi_{\kappa_\alpha}^Q$ is $c_{\kappa_\alpha}$-abs\-olutely continuous, (\ref{bal1}) holds true $\xi_{\kappa_\alpha}^Q$-a.e. By the domination principle,
\begin{equation*}
U_{\kappa_\alpha}^{\xi_{\kappa_\alpha}^Q}\leqslant U_{\kappa_\alpha}^\xi\quad\text{on all of $\mathbb R^n$},
\end{equation*}
whence, by the principle of positivity of mass for $\kappa_\alpha$-potentials \cite[Theorem~3.11]{FZ},
\begin{equation}\label{bal4}
\xi_{\kappa_\alpha}^Q(\mathbb R^n)\leqslant\xi(\mathbb R^n).
\end{equation}

In the theory of $\kappa_\alpha$-balayage, the following integral representation is particularly useful (see \cite[Theorem~5.1]{Z-bal2}):
\begin{equation}\label{int-repr}
\xi_{\kappa_\alpha}^Q=\int(\varepsilon_x)_{\kappa_\alpha}^Q\,d\xi(x),
\end{equation}
where
\[(\varepsilon_x)_{\kappa_\alpha}^Q:=\varepsilon_x\quad\text{for all $x\in Q^r$}.\]

\begin{remark}\label{compare}
Compare with \cite[Theorem~3.17]{FZ}, dealing with $\xi\in\mathfrak M^+(Q^c;\mathbb R^n)$. However, this theorem from \cite{FZ} was based on Lemma~3.16 of the same paper, whose proof was incomplete. Indeed, the functions $y\mapsto\int f_j\,d\varepsilon_y^A$, appearing in \cite[Proof of Lemma~3.16(a)]{FZ}, might be of noncompact support, and hence the suggested use of \cite[Section~IV.3, Proposition~4]{B2} (see the last line on p.~133 in \cite{FZ}) was unjustified.
\end{remark}

Thus, in view of (\ref{int-repr}), for every $\varphi\in C_0(\mathbb R^n)$,
\begin{equation}\label{I}
\int\varphi(z)\,d\xi_{\kappa_\alpha}^Q(z)=\int\biggl(\int\varphi(z)\,d(\varepsilon_x)^Q_{\kappa_\alpha}(z)\biggr)\,d\xi(x).
\end{equation}
Moreover, by virtue of \cite[Section~V.3, Proposition~2]{B2}, equality (\ref{I}) remains valid when $\varphi$ is replaced by any positive l.s.c.\ function on $\mathbb R^n$. For a given $y\in\mathbb R^n$, we apply this to the (positive l.s.c.) function $\kappa_\alpha(y,z)$, $z\in\mathbb R^n$, and thus obtain
\begin{equation}\label{repr-th1}
U^{\xi_{\kappa_\alpha}^Q}_{\kappa_\alpha}(y)=\int\biggl(\int\kappa_\alpha(y,z)\,d(\varepsilon_x)^Q_{\kappa_\alpha}(z)\biggr)\,d\xi(x)=\int U^{(\varepsilon_x)^Q_{\kappa_\alpha}}_{\kappa_\alpha}(y)\,d\xi(x).
\end{equation}
Similarly, applying (\ref{I}) to the (positive l.s.c.) constant function $1$ on $\mathbb R^n$ gives
\begin{equation}\label{TM}
 \xi_{\kappa_\alpha}^Q(\mathbb R^n)=\int(\varepsilon_x)^Q_{\kappa_\alpha}(\mathbb R^n)\,d\xi(x).
\end{equation}

\subsection{$\alpha$-thinness of a set at infinity}\label{sec-thin} By Kurokawa and Mizuta \cite{KM}, a (closed) set $Q\subset\mathbb R^n$ is said to be {\it $\alpha$-thin at infinity} if
\[\sum_{j\in\mathbb N}\,\frac{c_{\kappa_\alpha}(Q_j)}{q^{j(n-\alpha)}}<\infty,\]
where $q\in(1,\infty)$ and $Q_j:=Q\cap\bigl\{y\in\mathbb R^n:\ q^j<|y|\leqslant q^{j+1}\bigr\}$.

\begin{theorem}\label{th-thin} The following {\rm(i)--(vi)} are equivalent.
\begin{itemize}
\item[{\rm(i)}] $Q$ is $\alpha$-thin at infinity.
\item[{\rm(ii)}] For some {\rm(equivalently, every)} $x\in\mathbb R^n$,
\[x\not\in(Q^*_x)^r,\]
where $Q^*_x$ is the inversion of\/ $Q\cup\{\infty_{\mathbb R^n}\}$ with respect to $\{y:\ |y-x|=1\}$.
\item[{\rm(iii)}]There exists $\nu\in\mathfrak M^+(\mathbb R^n)$ such that
\[\essinf_{x\in Q}\,U_{\kappa_\alpha}^\nu(x)>0,\]
the infimum being taken over all of $Q$ except for a subset of $\kappa_\alpha$-capacity zero.
\item[{\rm(iv)}] There exists the unique measure $\gamma_{Q,\kappa_\alpha}\in\mathfrak M^+(Q;\mathbb R^n)$ having the property
\begin{equation*}
U_{\kappa_\alpha}^{\gamma_{Q,\kappa_\alpha}}=1\quad\text{q.e.\ on $Q$};
\end{equation*}
such $\gamma_{Q,\kappa_\alpha}$ is said to be the {\rm(}generalized\/{\rm)} $\kappa_\alpha$-equilibrium measure on $Q$.\footnote{Both $I_{\kappa_\alpha}(\gamma_{Q,\kappa_\alpha})$ and $\gamma_{Q,\kappa_\alpha}(Q)$ may be $+\infty$; these are finite if and only if so is $c_{\kappa_\alpha}(Q)$. Also note that $\gamma_{Q,\kappa_\alpha}$ is $c_{\kappa_\alpha}$-absolutely continuous, for $U_{\kappa_\alpha}^{\gamma_{Q,\kappa_\alpha}}\leqslant1$ on $\mathbb R^n$ by Frostman's maximum principle.}
\item[{\rm(v)}] There exists $x\in Q^c$ such that $(\varepsilon_x)^Q_{\kappa_\alpha}(\mathbb R^n)<1$.\footnote{For such $x$, $(\varepsilon_x)^Q_{\kappa_\alpha}(\mathbb R^n)=U_{\kappa_\alpha}^{\gamma_{Q,\kappa_\alpha}}(x)$, $\gamma_{Q,\kappa_\alpha}$ being introduced by (iv). See \cite[Theorem~2.2]{Z-bal2}.}
\item[{\rm(vi)}] There exists $\xi\in\mathfrak M^+(Q^c;\mathbb R^n)$ for which strict inequality in {\rm(\ref{bal4})} holds:
\begin{equation}\label{bal5}\xi_{\kappa_\alpha}^Q(\mathbb R^n)<\xi(\mathbb R^n).\end{equation}
\end{itemize}
\end{theorem}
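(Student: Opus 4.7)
The plan is to close a loop of implications via three bridges: Kelvin inversion (for $(\text{i})\Leftrightarrow(\text{ii})$), the integral representations \eqref{int-repr}--\eqref{TM} of $\kappa_\alpha$-bal\-ay\-age (for the block $(\text{iv})$--$(\text{v})$--$(\text{vi})$), and an extremal construction of the generalized $\kappa_\alpha$-equilibrium measure (to absorb $(\text{iii})$).

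For $(\text{i})\Leftrightarrow(\text{ii})$ I would use that under the inversion $T_x$ with respect to $\{y:|y-x|=1\}$ the $\alpha$-Riesz kernel transforms as $\kappa_\alpha(T_x y_1,T_x y_2)=|y_1-x|^{n-\alpha}|y_2-x|^{n-\alpha}\kappa_\alpha(y_1,y_2)$. Consequently the annuli $Q_j=Q\cap\{q^j<|y|\leqslant q^{j+1}\}$ appearing in the thinness criterion are mapped onto annuli in $Q_x^*$ shrinking to $x$, and the two series are term-by-term comparable up to the scaling factor $q^{j(n-\alpha)}$. The Wiener criterion from the top of Section~\ref{ssec2}, applied to $Q_x^*$ at the point $x$, then identifies $(\text{i})$ with $x\notin(Q_x^*)^r$. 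The "for every" part of $(\text{ii})$ follows because $\alpha$-thin\-ness at infinity of $Q$ is intrinsic to $Q$ and does not depend on the choice of inversion center.

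For the block $(\text{iv})$--$(\text{v})$--$(\text{vi})$ I would exploit the duality recorded in the footnote: $(\varepsilon_x)_{\kappa_\alpha}^Q(\mathbb R^n)=U_{\kappa_\alpha}^{\gamma_{Q,\kappa_\alpha}}(x)$ whenever $\gamma_{Q,\kappa_\alpha}$ exists. Then $(\text{iv})\Rightarrow(\text{v})$ reduces to showing $U_{\kappa_\alpha}^{\gamma_{Q,\kappa_\alpha}}<1$ at some $x\in Q^c$, which follows from Frostman's principle ($\leqslant1$ everywhere) combined with the impossibility of a nonzero $\kappa_\alpha$-potential being identically $1$ (its liminf at infinity is $0$). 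For $(\text{v})\Rightarrow(\text{iv})$ I construct $\gamma_{Q,\kappa_\alpha}$ as the vague limit of the capacitary measures $\gamma_{Q\cap\overline{B_R},\kappa_\alpha}$: hypothesis $(\text{v})$ fed through \eqref{TM} gives uniform mass bounds, and perfectness of $\kappa_\alpha$ together with the domination principle transfers the equilibrium identity to the limit. The equivalence $(\text{v})\Leftrightarrow(\text{vi})$ is then immediate: $\xi:=\varepsilon_x$ handles one direction, while \eqref{TM} together with the universal bound $(\varepsilon_y)_{\kappa_\alpha}^Q(\mathbb R^n)\leqslant1$ for $y\in Q^c$ (a consequence of \eqref{bal4}) handles the other.

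The remaining links are $(\text{ii})\Leftrightarrow(\text{v})$ and $(\text{iii})\Leftrightarrow(\text{iv})$. For the first, Definition~\ref{def-bal} applied to the inverted set shows $x\in(Q_x^*)^r\Leftrightarrow(\varepsilon_x)_{\kappa_\alpha}^{Q_x^*}=\varepsilon_x$, which under $T_x$ transfers to $(\varepsilon_x)_{\kappa_\alpha}^Q(\mathbb R^n)=1$. For the second, $(\text{iv})\Rightarrow(\text{iii})$ is trivial with $\nu:=\gamma_{Q,\kappa_\alpha}$; conversely, given $\nu$ with essinf $U_{\kappa_\alpha}^\nu>0$ on $Q$, I rescale so $U_{\kappa_\alpha}^\nu\geqslant1$ q.e.\ on $Q$, form the balayage $\nu_{\kappa_\alpha}^Q$ (of finite mass by \eqref{bal4}, $c_{\kappa_\alpha}$-ab\-sol\-ut\-ely continuous, with potential $\leqslant U_{\kappa_\alpha}^\nu$ by domination and $=U_{\kappa_\alpha}^\nu$ q.e.\ on $Q$), and extract $(\text{v})$ (hence $(\text{iv})$) by an extremal argument: since $\nu_{\kappa_\alpha}^Q$ has finite mass and $\nu$ certainly contains mass in $Q^c$ (take $\nu$ nontrivially concentrated there by a preliminary reduction), strict inequality in \eqref{bal4} for some $x\in Q^c$ is forced. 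The hardest step will be this $(\text{iii})\Rightarrow(\text{iv})$, where converting a qualitative positive lower bound into the sharp equilibrium identity $U_{\kappa_\alpha}^{\gamma_{Q,\kappa_\alpha}}=1$ q.e.\ on $Q$ requires careful coordination of Frostman, domination, perfectness, and $c_{\kappa_\alpha}$-absolute continuity; the Kelvin inversion step, while geometrically transparent, needs careful bookkeeping of the capacity scaling factors.
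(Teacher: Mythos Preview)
The paper's own ``proof'' is purely referential: it simply cites \cite[Theorem~5.1]{L}, \cite[Theorems~5.5, 8.6, 8.7]{Z-bal}, and \cite[Theorems~2.1, 2.2, Corollary~5.3]{Z-bal2}. Your outline therefore attempts considerably more than the paper does, and your overall architecture (Kelvin inversion for $(\mathrm{i})\Leftrightarrow(\mathrm{ii})$, the integral representation \eqref{TM} for the mass block $(\mathrm{iv})$--$(\mathrm{v})$--$(\mathrm{vi})$) is exactly the strategy carried out in those cited references.

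There is, however, a genuine gap in your treatment of $(\mathrm{iii})\Rightarrow(\mathrm{iv})$. You write that $\nu_{\kappa_\alpha}^Q$ is ``of finite mass by \eqref{bal4}'', but \eqref{bal4} only gives $\nu_{\kappa_\alpha}^Q(\mathbb R^n)\leqslant\nu(\mathbb R^n)$, and nothing in $(\mathrm{iii})$ forces $\nu(\mathbb R^n)<\infty$; the standing hypothesis on $\nu\in\mathfrak M^+(\mathbb R^n)$ in Section~\ref{ssec2} is only the growth condition \eqref{intf}. Your suggested ``preliminary reduction'' to $\nu$ concentrated on $Q^c$ does not help either: discarding $\nu|_Q$ may destroy the lower bound $U_{\kappa_\alpha}^\nu\geqslant1$ q.e.\ on $Q$, and there is no reason $\nu$ must carry any mass on $Q^c$ at all. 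So the extremal argument you sketch does not get off the ground. The route actually taken in the cited sources is different: one proves $\neg(\mathrm{i})\Rightarrow\neg(\mathrm{iii})$ by Kelvin inversion, transporting the problem to a finite point $x$ where $Q_x^*$ is \emph{not} thin, and then using that the Kelvin transform of $U_{\kappa_\alpha}^\nu$ acquires the factor $|y-x|^{n-\alpha}$, which forces the transformed potential (hence $U_{\kappa_\alpha}^\nu$ itself along $Q$ near infinity) to have essential infimum zero.

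A smaller point: in your link $(\mathrm{ii})\Leftrightarrow(\mathrm{v})$ the phrase ``under $T_x$ transfers to $(\varepsilon_x)_{\kappa_\alpha}^Q(\mathbb R^n)=1$'' hides the real content. The Dirac mass at $x$ is sent to infinity by $T_x$, so one is really comparing the generalized equilibrium measure on $Q$ with the balayage of $\varepsilon_x$ onto $Q_x^*$; this is exactly the duality $(\varepsilon_x)_{\kappa_\alpha}^Q(\mathbb R^n)=U_{\kappa_\alpha}^{\gamma_{Q,\kappa_\alpha}}(x)$ recorded in the footnote, which presupposes $(\mathrm{iv})$. It is cleaner to run the cycle as $(\mathrm{i})\Leftrightarrow(\mathrm{ii})$ by Kelvin, $(\mathrm{i})\Rightarrow(\mathrm{iv})$ by the vague-limit construction (local mass bounds coming from $U_{\kappa_\alpha}^{\gamma_K}\leqslant1$, not from \eqref{TM}), then $(\mathrm{iv})\Rightarrow(\mathrm{v})\Rightarrow(\mathrm{vi})$ as you indicate, and close with $(\mathrm{vi})\Rightarrow(\mathrm{i})$ and $(\mathrm{iii})\Leftrightarrow(\mathrm{i})$ via Kelvin as above.
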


\begin{proof}See \cite[Theorem~5.1]{L}, \cite[Theorems~5.5, 8.6, 8.7]{Z-bal}, and \cite[Theorems~2.1, 2.2, Corollary~5.3]{Z-bal2}. We emphasize that in \cite[Proofs of Theorems~8.6, 8.7]{Z-bal}, one has to use \cite[Theorem~5.1]{Z-bal2} in place of \cite[Theorem~8.2]{Z-bal} (cf.\ Remark~\ref{compare} above).\end{proof}

\begin{remark}\label{refined}
Unless $\alpha<2$, assume that $Q^c$ is connected. Then (v) and (vi) in Theorem~\ref{th-thin} can be refined as follows (see \cite[Theorem~8.7]{Z-bal}, cf.\ (\ref{TM})):
\begin{itemize}
\item[{\rm(v$'$)}] {\sl $Q$ is $\alpha$-thin at infinity if and only if for some {\rm(}equivalently, for all\/{\rm)} $x\in Q^c$, we have $(\varepsilon_x)^Q_{\kappa_\alpha}(\mathbb R^n)<1$.}
\item[{\rm(vi$'$)}] {\sl $Q$ is $\alpha$-thin at infinity if and only if for some {\rm(}equivalently, for all\/{\rm)} nonzero $\xi\in\mathfrak M^+(Q^c;\mathbb R^n)$, {\rm(\ref{bal5})} holds true.}
\end{itemize}
\end{remark}

\subsection{$\alpha$-harmonic measure on $\overline{\mathbb R^n}:=\mathbb R^n\cup\{\infty_{\mathbb R^n}\}$}\label{ssec3} We call a set $e\subset\overline{\mathbb R^n}$ {\it Borel\/} if so is $e\cap\mathbb R^n$.
For any open $\Delta\subset\mathbb R^n$ and any Borel $e\subset\overline{\mathbb R^n}$, we define the (fractional) {\it $\alpha$-harmonic measure} $\omega_\alpha(x,e;\Delta)$, $x\in\Delta$, by means of the formula (see \cite[Section~3.2]{Z-AMP})
\begin{equation}\label{def-h}
\omega_\alpha(x,e;\Delta)=\left\{
\begin{array}{cl}(\varepsilon_x)^{\Delta^c}_{\kappa_\alpha}(e)&\text{\ if $e\subset\mathbb R^n$},\\
(\varepsilon_x)^{\Delta^c}_{\kappa_\alpha}(e\cap\mathbb R^n)+\omega_\alpha(x,\{\infty_{\mathbb R^n}\};\Delta)&\text{\ otherwise},\\ \end{array} \right.\end{equation}
where
\begin{equation}\label{Def}
\omega_\alpha(x,\{\infty_{\mathbb R^n}\};\Delta):=1-(\varepsilon_x)^{\Delta^c}_{\kappa_\alpha}(\mathbb R^n).
\end{equation}
Thus,
\[\omega_\alpha(x,\overline{\mathbb R^n};\Delta)=1\quad\text{for all $x\in\Delta$}.\]
Since for $\alpha=2$, $S\bigl((\varepsilon_x)^{\Delta^c}_{\kappa_2};\mathbb R^n\bigr)\subset\partial_{\mathbb R^n}\Delta$ for every $x\in\Delta$,
cf.\ \cite[Theorem~8.5]{Z-bal}, the concept of $\alpha$-harmonic measure in $\overline{\mathbb R^n}$, introduced by (\ref{def-h}) and (\ref{Def}), generalizes that of $2$-har\-mo\-nic measure, defined in \cite[Section~IV.3.12]{L} for Borel subsets of $\partial_{\mathbb R^n}\Delta$.

It is clear from (\ref{bal4}) that $(\varepsilon_x)^{\Delta^c}_{\kappa_\alpha}(\mathbb R^n)\leqslant1$ for every $x\in\Delta$, whence
\begin{equation*}\omega_\alpha(x,\{\infty_{\mathbb R^n}\};\Delta)\geqslant0\quad\text{for all $x\in\Delta$.}\end{equation*}
On account of Remark~\ref{refined}(v$'$), we arrive at the following observation.

\begin{corollary}\label{cor-infty} Unless $\alpha<2$, assume that $\Delta$ is connected. Then $\Delta^c$ is not $\alpha$-thin at infinity if and only if for some {\rm(}equivalently, for all\/{\rm)} $x\in\Delta$,
\[\omega_\alpha(x,\{\infty_{\mathbb R^n}\};\Delta)=0.\]
\end{corollary}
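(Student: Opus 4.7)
The plan is to read off the corollary directly from Remark~\ref{refined}(v$'$) and the defining identity (\ref{Def}). To set things up, I would take $Q:=\Delta^c$; then $Q$ is closed in $\mathbb R^n$ and $Q^c=\Delta$, and under the standing hypothesis (in the case $\alpha=2$, $\Delta$ is connected) the set $Q^c$ is connected, so Remark~\ref{refined}(v$'$) does apply.

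First I would translate the statement $\omega_\alpha(x,\{\infty_{\mathbb R^n}\};\Delta)=0$ into balayage language: by (\ref{Def}), this is equivalent to $(\varepsilon_x)^{\Delta^c}_{\kappa_\alpha}(\mathbb R^n)=1$. Combined with the universal bound (\ref{bal4}), which yields $(\varepsilon_x)^{\Delta^c}_{\kappa_\alpha}(\mathbb R^n)\leqslant1$ for every $x\in\Delta$, I obtain the equivalence
\[\omega_\alpha(x,\{\infty_{\mathbb R^n}\};\Delta)=0\iff\neg\bigl[(\varepsilon_x)^{\Delta^c}_{\kappa_\alpha}(\mathbb R^n)<1\bigr].\]

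Next I would invoke Remark~\ref{refined}(v$'$) with $Q=\Delta^c$: it asserts that $\Delta^c$ is $\alpha$-thin at infinity if and only if $(\varepsilon_x)^{\Delta^c}_{\kappa_\alpha}(\mathbb R^n)<1$ for some, equivalently for all, $x\in\Delta$. Taking the contrapositive, $\Delta^c$ is \emph{not} $\alpha$-thin at infinity iff $(\varepsilon_x)^{\Delta^c}_{\kappa_\alpha}(\mathbb R^n)\geqslant1$ for all (equivalently, for some) $x\in\Delta$, and by (\ref{bal4}) the inequality collapses to equality. Substituting into the first step yields the claimed equivalence
\[\Delta^c\text{ is not $\alpha$-thin at infinity}\iff\omega_\alpha(x,\{\infty_{\mathbb R^n}\};\Delta)=0\text{ for some (equivalently, all) }x\in\Delta.\]

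There is no real obstacle here; the corollary is essentially a reformulation of Remark~\ref{refined}(v$'$) in terms of the $\alpha$-harmonic measure. The only points requiring a little care are the quantifier swap (handled by the ``some $\iff$ all'' assertion already built into Remark~\ref{refined}(v$'$)) and the verification that the connectedness hypothesis needed when $\alpha=2$ is indeed available for $Q^c=\Delta$ under the assumption of the corollary.
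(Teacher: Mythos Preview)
Your proposal is correct and follows essentially the same approach as the paper: the paper simply remarks that $(\varepsilon_x)^{\Delta^c}_{\kappa_\alpha}(\mathbb R^n)\leqslant1$ by (\ref{bal4}) and then invokes Remark~\ref{refined}(v$'$), which is exactly your argument spelled out in more detail.
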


\subsection{$\alpha$-Green kernel}\label{ssec4} In the rest of this paper, we fix a domain $D\subset\mathbb R^n$.
A measure $\mu\in\mathfrak M^+(D)$, $D$ being treated as a locally compact space, is said to be {\it extendible} by zero outside $D$ to all of $\mathbb R^n$ if there is the (unique) $\widehat{\mu}\in\mathfrak M^+(\mathbb R^n)$ such that
\begin{equation*}\int\varphi|_D\,d\mu=\widehat{\mu}(\varphi)\quad\text{for all $\varphi\in C_0(\mathbb R^n)$};\end{equation*}
let $\breve{\mathfrak M}^+(D)$ denote the class of all those $\mu$. If no confusion can arise, this extension of $\mu\in\breve{\mathfrak M}^+(D)$ will be denoted by the same symbol $\mu$, i.e.\ $\widehat{\mu}:=\mu$.

Note that $\breve{\mathfrak M}^+(D)$ can equivalently be introduced as the class of all traces $\nu|_D$, $\nu$ ranging over all of $\mathfrak M^+(\mathbb R^n)$. We also remark that, in general, $\breve{\mathfrak M}^+(D)\ne\mathfrak M^+(D)$ (unless, of course, $Y:=\mathbb R^n\setminus D$ is compact). Another useful observation is that a measure $\mu\in\mathfrak M^+(D)$ is obviously extendible if it is bounded. For any $A\subset D$, denote
\[\breve{\mathfrak M}^+(A;D):=\mathfrak M^+(A;D)\cap\breve{\mathfrak M}^+(D).\]

The {\it $\alpha$-Green kernel} $g_\alpha:=g_D^\alpha$ on $D$ is defined by
\begin{equation}\label{g}
g_\alpha(x,y):=\kappa_\alpha(x,y)-U_{\kappa_\alpha}^{(\varepsilon_x)_{\kappa_\alpha}^Y}(y)=
U_{\kappa_\alpha}^{\varepsilon_x}(y)-
U_{\kappa_\alpha}^{(\varepsilon_x)_{\kappa_\alpha}^Y}(y),\quad x\in D, \ y\in\mathbb R^n,
\end{equation}
where $(\varepsilon_x)_{\kappa_\alpha}^Y$ is the $\kappa_\alpha$-balayage of $\varepsilon_x$, $x\in D$, onto $Y$, cf.\ Definition~\ref{def-bal}. (Here we have used the fact that the measure $\varepsilon_x\in\mathfrak M^+(D)$, $x\in D$, is extendible, and hence it can be treated as an element of $\mathfrak M^+(\mathbb R^n)$.) For more details, see e.g.\ \cite{Fr,FZ,L}.

\begin{lemma}\label{gpot}
For any $\mu\in\breve{\mathfrak M}^+(D)$, the $g_\alpha$-potential $U^\mu_{g_\alpha}$ is well defined and finite q.e.\ on $\mathbb R^n$ and given by
\begin{equation}\label{g1}
U^\mu_{g_\alpha}=U_{\kappa_\alpha}^\mu-U_{\kappa_\alpha}^{\mu^Y_{\kappa_\alpha}}.
\end{equation}
\end{lemma}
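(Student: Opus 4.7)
The plan is to derive the formula by integrating the defining identity (\ref{g}) against $\mu$, invoking the integral representation (\ref{repr-th1}) for the balayage potential; the only delicate point is justifying the decomposition despite the fact that the defining expression for $g_\alpha$ is a difference of possibly infinite quantities.

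To avoid any $\infty-\infty$ issue, first I would rewrite (\ref{g}) as the sum of two nonnegative l.s.c.\ functions,
\[\kappa_\alpha(x,y)=g_\alpha(x,y)+U_{\kappa_\alpha}^{(\varepsilon_x)_{\kappa_\alpha}^Y}(y),\quad x\in D,\ y\in\mathbb R^n,\]
where nonnegativity of $g_\alpha$ follows from the domination principle applied to $(\varepsilon_x)_{\kappa_\alpha}^Y$ versus $\varepsilon_x$ (here one uses that, since $x\in D$ open, $(\varepsilon_x)_{\kappa_\alpha}^Y$ has support in $Y$, bounded mass $\leqslant 1$ by (\ref{bal4}), and that $U^{\varepsilon_x}_{\kappa_\alpha}=\kappa_\alpha(x,\cdot)$ is finite $(\varepsilon_x)_{\kappa_\alpha}^Y$-a.e.).

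Next, I would integrate the above identity against $\mu\in\breve{\mathfrak M}^+(D)$ (viewed, via the extension, as an element of $\mathfrak M^+(\mathbb R^n)$). Because both summands on the right are nonnegative and l.s.c., Tonelli's theorem applies without reservation, yielding
\[U^\mu_{\kappa_\alpha}(y)=\int g_\alpha(x,y)\,d\mu(x)+\int U_{\kappa_\alpha}^{(\varepsilon_x)_{\kappa_\alpha}^Y}(y)\,d\mu(x)=U^\mu_{g_\alpha}(y)+U^{\mu_{\kappa_\alpha}^Y}_{\kappa_\alpha}(y),\]
the last equality being the integral representation (\ref{repr-th1}) applied with $\xi:=\mu$ and $Q:=Y$. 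The hypothesis of (\ref{repr-th1}) holds here because $\mu$ is concentrated on $D=Y^c$, so $\mu|_{Y^r}=0$ is trivially $c_{\kappa_\alpha}$-absolutely continuous; moreover the standing assumption (\ref{intf}) ensures that $\mu^Y_{\kappa_\alpha}$ and its potential are meaningful.

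Finally, by the standing convention recalled at the start of Subsection~\ref{ssec2}, $U^\mu_{\kappa_\alpha}<\infty$ q.e.\ on $\mathbb R^n$. At every such $y$, the identity above expresses $U^\mu_{\kappa_\alpha}(y)$ as a sum of two nonnegative terms, forcing both $U^\mu_{g_\alpha}(y)$ and $U^{\mu^Y_{\kappa_\alpha}}_{\kappa_\alpha}(y)$ to be finite; rearranging gives (\ref{g1}) and simultaneously shows that $U^\mu_{g_\alpha}$ is well defined and finite q.e.\ on $\mathbb R^n$. The only conceptual hurdle is the legitimacy of splitting the integral, which is resolved precisely by this nonnegative-decomposition trick together with the availability of (\ref{repr-th1}); everything else is routine.
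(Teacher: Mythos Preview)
Your proof is correct and follows essentially the same route as the paper, which simply cites \cite[Proof of Lemma~4.4]{FZ} with the key amendment that (\ref{repr-th1}) must be invoked in place of the unjustified \cite[Eq.~(3.32)]{FZ}; your argument spells out exactly that computation. One cosmetic remark: the nonnegativity of $g_\alpha$ that you justify via domination is already recorded in the paper immediately after Definition~\ref{def-bal} (the inequality $U_{\kappa_\alpha}^{\xi_{\kappa_\alpha}^Q}\leqslant U_{\kappa_\alpha}^\xi$ on all of $\mathbb R^n$), so you may simply cite that.
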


\begin{proof}
  This follows in the same manner as in \cite[Proof of Lemma~4.4]{FZ}, the only difference being in applying (\ref{repr-th1}) of the present paper in place of the unjustified Eq.~(3.32) in \cite{FZ} (see Remark~\ref{compare} above for details).
\end{proof}

As shown in \cite[Theorems~4.9, 4.11]{FZ}, the kernel $g_\alpha$ is {\it perfect}. Furthermore, it satisfies the Frostman and domination maximum principles in the following sense.

\begin{theorem}\label{gFDom}Given $\mu,\nu\in\breve{\mathfrak M}^+(D)$, the following {\rm(a)} and {\rm(b)} are fulfilled.
\begin{itemize}
  \item[{\rm(a)}] If $U^\mu_{g_\alpha}\leqslant1$ $\mu$-a.e., then the same inequality holds true on all of $\mathbb R^n$.
  \item[{\rm(b)}] Assume $\mu$ is $c_{g_\alpha}$-absolutely continuous.\footnote{For extendible measures, the concepts of $c_{g_\alpha}$- and $c_{\kappa_\alpha}$-absolute continuity coincide. Indeed,
      \[c_{g_\alpha}(K)=0\iff c_{\kappa_\alpha}(K)=0\quad\text{for any compact $K\subset D$},\] which follows by noting that  $U_{\kappa_\alpha}^{(\varepsilon_x)_{\kappa_\alpha}^Y}(y)$, $x\in D$, is bounded when $y$ ranges over compact $K\subset D$.} If moreover
      \[U_{g_\alpha}^\mu\leqslant U_{g_\alpha}^\nu\quad\text{$\mu$-a.e.,}\] then the same inequality holds true on all of $D$.
\end{itemize}
\end{theorem}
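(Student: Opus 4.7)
The strategy is to reduce each of the two maximum principles for $g_\alpha$ to the corresponding principle for $\kappa_\alpha$ on $\mathbb R^n$, by means of the identity
\[U^\sigma_{g_\alpha}=U^\sigma_{\kappa_\alpha}-U^{\sigma^Y_{\kappa_\alpha}}_{\kappa_\alpha}\quad(\text{q.e.\ on }\mathbb R^n)\]
from Lemma~\ref{gpot}, combined with the balayage identity (\ref{bal1}) that controls the two sides on $Y$. I would do (b) first, since it reduces directly to the $\kappa_\alpha$-domination principle; (a) then follows by the same pattern.

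\emph{Part (b).} Form the auxiliary positive measures $\lambda:=\mu+\nu^Y_{\kappa_\alpha}$ and $\rho:=\nu+\mu^Y_{\kappa_\alpha}$ in $\mathfrak M^+(\mathbb R^n)$. Using (\ref{g1}), the hypothesis $U^\mu_{g_\alpha}\leqslant U^\nu_{g_\alpha}$ $\mu$-a.e.\ rewrites as $U^\lambda_{\kappa_\alpha}\leqslant U^\rho_{\kappa_\alpha}$ $\mu$-a.e., the rearrangement being legitimate because $\mu$ is $c_{g_\alpha}$-, equivalently $c_{\kappa_\alpha}$-, absolutely continuous (footnote to the statement), so both $U^{\mu^Y_{\kappa_\alpha}}_{\kappa_\alpha}$ and $U^{\nu^Y_{\kappa_\alpha}}_{\kappa_\alpha}$ are $\mu$-a.e.\ finite. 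By (\ref{bal1}) applied to $\mu$ and to $\nu$, the same relation holds as an \emph{equality} q.e.\ on $Y$, hence $\nu^Y_{\kappa_\alpha}$-a.e.\ on $Y$ (as $\nu^Y_{\kappa_\alpha}$ is $c_{\kappa_\alpha}$-absolutely continuous). Thus $U^\lambda_{\kappa_\alpha}\leqslant U^\rho_{\kappa_\alpha}$ $\lambda$-a.e.\ on $\mathbb R^n$, and since $\lambda$ is $c_{\kappa_\alpha}$-absolutely continuous (so $U^\lambda_{\kappa_\alpha}$ is $\lambda$-a.e.\ finite), the $\kappa_\alpha$-domination principle extends it to all of $\mathbb R^n$. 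Restricting to $y\in D$ and subtracting the q.e.-finite terms $U^{\mu^Y_{\kappa_\alpha}}_{\kappa_\alpha}(y)+U^{\nu^Y_{\kappa_\alpha}}_{\kappa_\alpha}(y)$ recovers $U^\mu_{g_\alpha}\leqslant U^\nu_{g_\alpha}$ q.e.\ on $D$, after which the pointwise integral formula $U^\sigma_{g_\alpha}(y)=\int g_\alpha(x,y)\,d\sigma(x)$ together with (\ref{repr-th1}) and the nonnegativity of $g_\alpha$ upgrades the conclusion to every $y\in D$.

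\emph{Part (a).} The same translation turns the hypothesis into $U^\mu_{\kappa_\alpha}\leqslant 1+U^{\mu^Y_{\kappa_\alpha}}_{\kappa_\alpha}$ $\mu$-a.e., which falls within the scope of the \emph{complete maximum principle} for $\kappa_\alpha$---a classical strengthening for Riesz kernels of the Frostman and domination principles recalled in the excerpt (cf.\ Landkof~\cite{L})---whose application extends the bound to all of $\mathbb R^n$. Subtracting $U^{\mu^Y_{\kappa_\alpha}}_{\kappa_\alpha}$ and invoking Lemma~\ref{gpot} yields $U^\mu_{g_\alpha}\leqslant 1$ q.e.\ on $\mathbb R^n$; on $Y$, (\ref{bal1}) already gives $U^\mu_{g_\alpha}=0$ q.e., so only the pointwise extension at capacity-zero exceptional sets remains, handled as in (b). The principal technical difficulty I expect is this last upgrade from q.e.\ to pointwise-everywhere at the $\kappa_\alpha$-null sets where a balayage potential can blow up; the reduction via the symmetrizing auxiliary measures $\lambda,\rho$ is the key algebraic observation, and the invocation of the complete maximum principle for $\kappa_\alpha$ is the substantive analytic input in (a).
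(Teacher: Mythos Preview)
Your proposal is correct and follows essentially the same route as the paper. For (a) the paper likewise translates via Lemma~\ref{gpot} to $U^\mu_{\kappa_\alpha}\leqslant 1+U^{\mu^Y_{\kappa_\alpha}}_{\kappa_\alpha}$ $\mu$-a.e.\ and then cites the complete maximum principle from Doob \cite[Theorem~1.V.10(b)]{Doob} (case $\alpha=2$) or Landkof \cite[Theorem~1.29]{L} (case $\alpha<2$); for (b) the paper just cites Doob for $\alpha=2$ and a slight modification of \cite[Proof of Theorem~4.6]{FZ} for $\alpha<2$, and the latter is exactly your symmetrizing auxiliary-measure argument with $\lambda=\mu+\nu^Y_{\kappa_\alpha}$ and $\rho=\nu+\mu^Y_{\kappa_\alpha}$ reducing to the $\kappa_\alpha$-domination principle.
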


\begin{proof}
  (a) Applying Lemma~\ref{gpot} gives $U^\mu_{\kappa_\alpha}\leqslant1+U_{\kappa_\alpha}^{\mu^Y_{\kappa_\alpha}}$ $\mu$-a.e., whence the claim in consequence of \cite[Theorem~1.V.10(b)]{Doob} if $\alpha=2$ or \cite[Theorem~1.29]{L} otherwise.

  (b) For $\alpha=2$, see \cite[Theorem~1.V.10(b)]{Doob}. For $\alpha<2$, this follows by a slight modification of \cite[Proof of Theorem~4.6]{FZ}.
\end{proof}

\subsection{$\alpha$-Green balayage}\label{ssec5} In what follows, fix a proper, relatively closed subset $F$ of the domain $D$ with $c_{g_\alpha}(F)>0$, and denote $\Omega:=D\setminus F$. Define
\[F^r:=(F\cup Y)^r\cap D.\]

\begin{theorem}\label{bal-exx}
 For any $\mu\in\breve{\mathfrak M}^+(D)$ such that $\mu|_{F^r}$ is $c_{\kappa_\alpha}$-absolutely continuous, there exists the unique $c_{\kappa_\alpha}$-absolutely continuous measure $\mu^F_{g_\alpha}\in\breve{\mathfrak M}^+(F;D)$ with
 \begin{equation}\label{bal1'}
 U_{g_\alpha}^{\mu^F_{g_\alpha}}(y)=U_{g_\alpha}^\mu(y)\quad\text{for all $y\in F^r$}\quad\text{{\rm(}hence, q.e.\ on $F${\rm)}};
 \end{equation}
 this $\mu^F_{g_\alpha}$ is said to be the $g_\alpha$-balayage of $\mu$ onto $F$.
 Actually,
 \begin{equation}\label{bal2}
 \mu^F_{g_\alpha}=\mu^{F\cup Y}_{\kappa_\alpha}\bigl|_F.
 \end{equation}
 If moreover $\mu\in\mathcal E^+_{g_\alpha}$, then the same $\mu^F_{g_\alpha}$ can alternatively be found as the only measure in the cone $\mathcal E^+_{g_\alpha}(F)$ such that
 \begin{equation*}
 \|\mu-\mu^F_{g_\alpha}\|_{g_\alpha}=\min_{\nu\in\mathcal E^+_{g_\alpha}(F)}\,\|\mu-\nu\|_{g_\alpha}.
 \end{equation*}
 \end{theorem}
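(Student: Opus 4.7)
The plan is to construct $\mu^F_{g_\alpha}$ by the explicit formula~(\ref{bal2}), then deduce the characterizing equality~(\ref{bal1'}) by passing to Riesz potentials via Lemma~\ref{gpot}, and finally derive the extremal characterization from the general framework of Section~\ref{ssec1}. Since $\mu$ is concentrated on $D$ and $Y\cap D=\emptyset$, the trace $\mu|_{(F\cup Y)^r}$ coincides with $\mu|_{F^r}$, hence is $c_{\kappa_\alpha}$-absolutely continuous by hypothesis, so Definition~\ref{def-bal} supplies $\mu^{F\cup Y}_{\kappa_\alpha}$, which is $c_{\kappa_\alpha}$-absolutely continuous and concentrated on $F\cup Y$. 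Because $F\cap Y=\emptyset$, I split
\[
\mu^{F\cup Y}_{\kappa_\alpha}=\sigma_F+\sigma_Y,\qquad \sigma_F:=\mu^{F\cup Y}_{\kappa_\alpha}\big|_F,\ \sigma_Y:=\mu^{F\cup Y}_{\kappa_\alpha}\big|_Y,
\]
and set $\mu^F_{g_\alpha}:=\sigma_F$. As the restriction to $F\subset D$ of a Radon measure on $\mathbb R^n$, this lies in $\breve{\mathfrak M}^+(F;D)$ and inherits $c_{\kappa_\alpha}$-absolute continuity, which already yields~(\ref{bal2}).

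The core of the argument is the Riesz-balayage composition identity $\mu^Y_{\kappa_\alpha}=(\sigma_F)^Y_{\kappa_\alpha}+\sigma_Y$. I would first prove $(\mu^{F\cup Y}_{\kappa_\alpha})^Y_{\kappa_\alpha}=\mu^Y_{\kappa_\alpha}$: both sides are $c_{\kappa_\alpha}$-absolutely continuous and concentrated on $Y$, and the Wiener series criterion from Section~\ref{ssec2} gives $Y^r\subseteq(F\cup Y)^r$, so on $Y^r$ both have $\kappa_\alpha$-potential equal to $U_{\kappa_\alpha}^\mu$ by the defining property of the outer balayage onto $F\cup Y$; uniqueness in Definition~\ref{def-bal} then identifies them. (Both balayages are well posed: $\mu|_{Y^r}=0$ since $Y^r\subseteq Y$ is disjoint from $D$, while $\mu^{F\cup Y}_{\kappa_\alpha}|_{Y^r}$ inherits $c_{\kappa_\alpha}$-absolute continuity.) Linearity of the balayage, visible from the integral representation~(\ref{int-repr}), together with the trivial identity $(\sigma_Y)^Y_{\kappa_\alpha}=\sigma_Y$, now yields the composition identity. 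Combining it with the defining property $U_{\kappa_\alpha}^\mu=U_{\kappa_\alpha}^{\sigma_F}+U_{\kappa_\alpha}^{\sigma_Y}$ valid on $(F\cup Y)^r\supseteq F^r$ and with Lemma~\ref{gpot}, one computes, for $y\in F^r$,
\[
U_{g_\alpha}^\mu(y)=U_{\kappa_\alpha}^\mu(y)-U_{\kappa_\alpha}^{\mu^Y_{\kappa_\alpha}}(y)=U_{\kappa_\alpha}^{\sigma_F}(y)-U_{\kappa_\alpha}^{(\sigma_F)^Y_{\kappa_\alpha}}(y)=U_{g_\alpha}^{\mu^F_{g_\alpha}}(y),
\]
giving~(\ref{bal1'}); extension to q.e.\ on $F$ uses the Kellogg--Evans type theorem of Section~\ref{ssec2}.

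Uniqueness within the specified class follows from the domination principle and perfectness: if $\nu_1,\nu_2$ are two candidates, their $g_\alpha$-potentials agree q.e.\ on $F$, hence $\nu_i$-a.e.\ (as both are $c_{g_\alpha}$-absolutely continuous by the footnote to Theorem~\ref{gFDom}(b)), and Theorem~\ref{gFDom}(b) applied in both directions yields equality everywhere on $D$, whereupon the strict positive definiteness of $g_\alpha$ forces $\nu_1=\nu_2$. For the extremal characterization with $\mu\in\mathcal E^+_{g_\alpha}$, the general setup of Section~\ref{ssec1} applies because $F$ is closed in the locally compact space $D$ and $g_\alpha$ is perfect: $\mathcal E^+_{g_\alpha}(F)$ is strongly complete, so the orthogonal projection of $\mu$ exists and is characterized by equality of $g_\alpha$-potentials q.e.\ on $F$; I would then verify $\mu^F_{g_\alpha}\in\mathcal E^+_{g_\alpha}(F)$ (by approximating from below by bounded submeasures $\mu_n\uparrow\mu^F_{g_\alpha}$ in $\mathcal E^+_{g_\alpha}(F)$ and using $\|\mu_n\|_{g_\alpha}^2\leqslant I_{g_\alpha}(\mu,\mu_n)\leqslant\|\mu\|_{g_\alpha}\|\mu_n\|_{g_\alpha}$ to bound $\|\mu^F_{g_\alpha}\|_{g_\alpha}\leqslant\|\mu\|_{g_\alpha}$), and conclude via the already-proved uniqueness that $\mu^F_{g_\alpha}$ is that projection.

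The principal technical hurdle is the composition identity $\mu^Y_{\kappa_\alpha}=(\sigma_F)^Y_{\kappa_\alpha}+\sigma_Y$: although morally clear, it depends on the set-monotonicity $Y^r\subseteq(F\cup Y)^r$ provided by the Wiener criterion and on the uniqueness clause of the outer Riesz balayage in Definition~\ref{def-bal}, both of which must be invoked with care to keep every intermediate measure $c_{\kappa_\alpha}$-absolutely continuous and concentrated on the correct set.
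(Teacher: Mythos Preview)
The paper does not prove this theorem but merely cites \cite[Theorem~2.1]{Z-AMP}; your proposal supplies the actual argument, and the strategy---define $\mu^F_{g_\alpha}$ by~(\ref{bal2}), establish~(\ref{bal1'}) via Lemma~\ref{gpot} and the composition identity $\mu^Y_{\kappa_\alpha}=(\sigma_F)^Y_{\kappa_\alpha}+\sigma_Y$, then invoke the projection framework of Section~\ref{ssec1}---is correct and is essentially what the cited reference does.

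Two points deserve tightening. First, in the uniqueness step you pass from $U_{g_\alpha}^{\nu_1}=U_{g_\alpha}^{\nu_2}$ on $D$ to $\nu_1=\nu_2$ by ``strict positive definiteness of $g_\alpha$'', but that principle is stated for measures of finite $g_\alpha$-energy, which $\nu_1,\nu_2\in\breve{\mathfrak M}^+(F;D)$ need not have. A clean fix is to lift to $\mathbb R^n$ via~(\ref{g1}): the equality of $g_\alpha$-potentials on $D$, combined with $U_{g_\alpha}^{\nu_i}=0$ q.e.\ on $Y$, yields $U_{\kappa_\alpha}^{\nu_1+(\nu_2)^Y_{\kappa_\alpha}}=U_{\kappa_\alpha}^{\nu_2+(\nu_1)^Y_{\kappa_\alpha}}$ q.e.\ on $\mathbb R^n$, and since Riesz potentials determine $c_{\kappa_\alpha}$-absolutely continuous measures, restricting to $D$ gives $\nu_1=\nu_2$. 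Second, your displayed chain for $U_{g_\alpha}^\mu(y)$ at $y\in F^r$ subtracts Riesz potentials and is literally valid only where $U_{\kappa_\alpha}^\mu(y)<\infty$; this delivers~(\ref{bal1'}) q.e.\ on $F$ (which is all that the rest of the paper uses), but the pointwise claim on every $y\in F^r$ as stated would need the exceptional set handled separately.
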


 \begin{proof}
   See \cite[Theorem~2.1]{Z-AMP}. For $\mu:=\varepsilon_x$, $x\in\Omega$, cf.\ Frostman \cite[Section~5]{Fr}.
 \end{proof}

 It follows from (\ref{bal1'}) by utilizing Theorem~\ref{gFDom}(b) that
 \begin{equation*}
 U_{g_\alpha}^{\mu^F_{g_\alpha}}\leqslant U_{g_\alpha}^\mu\quad\text{on all of $D$};
 \end{equation*}
 whence, by the principle of positivity of mass for $g_\alpha$-potentials \cite[Theorem~4.13]{FZ},
\begin{equation}\label{bal4g}
\mu_{g_\alpha}^F(D)\leqslant\mu(D).
\end{equation}

\subsection{When does equality in (\ref{bal4g}) hold?} This question is answered by the following slight improvement of \cite[Theorem~3.7]{Z-AMP}.

\begin{theorem}\label{th-balM2} Unless $\alpha<2$, assume that $\Omega$ is connected. Fix $\mu\in\breve{\mathfrak M}^+(D)$ such that $\mu|_F$ is $c_{\kappa_\alpha}$-absolutely continuous while $\mu|_\Omega\ne0$.
Then {\rm(i$_1$)}--{\rm(iii$_1$)} are equivalent.
\begin{itemize}
\item[{\rm(i$_1$)}] $\mu^F_{g_\alpha}(D)=\mu(D)$.
\item[{\rm(ii$_1$)}] $\omega_\alpha(x,\overline{\mathbb R^n}\setminus D;\Omega)=0$ \ $\mu$-a.e.\ on\/ $\Omega$.
\item[{\rm(iii$_1$)}] $\Omega^c$ is not $\alpha$-thin at infinity, and
\begin{equation*}
\omega_\alpha(x,Y;\Omega)=0\quad\text{$\mu$-a.e.\ on\/ $\Omega$.}
\end{equation*}
\end{itemize}
\end{theorem}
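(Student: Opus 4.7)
My plan is to prove the chain (i$_1$) $\Longleftrightarrow$ (ii$_1$) $\Longleftrightarrow$ (iii$_1$) by first establishing the master mass-defect identity
$$\mu(D)-\mu^F_{g_\alpha}(D)=\int_\Omega\omega_\alpha(x,\overline{\mathbb R^n}\setminus D;\Omega)\,d\mu(x),$$
from which (i$_1$) $\Longleftrightarrow$ (ii$_1$) is immediate, the integrand being nonnegative. Starting from (\ref{bal2}), I have $\mu^F_{g_\alpha}(D)=\mu^{F\cup Y}_{\kappa_\alpha}(F)$. I would then decompose $\mu=\mu|_F+\mu|_\Omega$ and exploit linearity of $\kappa_\alpha$-balayage together with the observation that $\mu|_F$, being $c_{\kappa_\alpha}$-absolutely continuous and already concentrated on $F\subset F\cup Y$, is its own $\kappa_\alpha$-balayage onto $F\cup Y$ by uniqueness in Definition~\ref{def-bal}. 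Hence $\mu^{F\cup Y}_{\kappa_\alpha}=\mu|_F+(\mu|_\Omega)^{F\cup Y}_{\kappa_\alpha}$, giving
$$\mu^F_{g_\alpha}(D)=\mu(F)+(\mu|_\Omega)^{F\cup Y}_{\kappa_\alpha}(F).$$

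To convert $(\mu|_\Omega)^{F\cup Y}_{\kappa_\alpha}(F)$ into an integral against $d\mu$, I would appeal to the integral representation (\ref{int-repr}), obtaining $(\mu|_\Omega)^{F\cup Y}_{\kappa_\alpha}(F)=\int_\Omega(\varepsilon_x)^{\Omega^c}_{\kappa_\alpha}(F)\,d\mu(x)$, where $\Omega^c=F\cup Y$. Since $(\varepsilon_x)^{\Omega^c}_{\kappa_\alpha}$ is concentrated on the disjoint union $F\cup Y$, the integrand rewrites by (\ref{def-h}) and (\ref{Def}) as
$$(\varepsilon_x)^{\Omega^c}_{\kappa_\alpha}(F)=(\varepsilon_x)^{\Omega^c}_{\kappa_\alpha}(\mathbb R^n)-(\varepsilon_x)^{\Omega^c}_{\kappa_\alpha}(Y)=1-\omega_\alpha(x,\{\infty_{\mathbb R^n}\};\Omega)-\omega_\alpha(x,Y;\Omega),$$
which equals $1-\omega_\alpha(x,\overline{\mathbb R^n}\setminus D;\Omega)$ by additivity of $\omega_\alpha(x,\cdot;\Omega)$ on the disjoint decomposition $\overline{\mathbb R^n}\setminus D=Y\cup\{\infty_{\mathbb R^n}\}$. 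Integrating and summing with $\mu(F)$ produces the master identity.

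For (ii$_1$) $\Longleftrightarrow$ (iii$_1$) I would use the same nonnegative splitting $\omega_\alpha(x,\overline{\mathbb R^n}\setminus D;\Omega)=\omega_\alpha(x,Y;\Omega)+\omega_\alpha(x,\{\infty_{\mathbb R^n}\};\Omega)$. By Corollary~\ref{cor-infty}, valid under the standing connectedness hypothesis on $\Omega$ when $\alpha=2$, the function $x\mapsto\omega_\alpha(x,\{\infty_{\mathbb R^n}\};\Omega)$ is either identically zero on $\Omega$ (precisely when $\Omega^c$ is not $\alpha$-thin at infinity) or nowhere zero on $\Omega$. Since $\mu|_\Omega\ne 0$, vanishing $\mu$-a.e.\ on $\Omega$ is compatible only with the identically-zero case, which combined with $\omega_\alpha(\cdot,Y;\Omega)=0$ $\mu$-a.e.\ on $\Omega$ yields (iii$_1$). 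The main technical obstacle I foresee is justifying the passage in (\ref{int-repr}) from l.s.c.\ test functions to the Borel indicators $1_F$, $1_Y$, since (\ref{I}) is stated only for $\varphi\in C_0(\mathbb R^n)$ and extended to positive l.s.c.\ functions: for $1_Y$ this is straightforward because $Y$ is closed in $\mathbb R^n$ and hence $1_Y$ is the decreasing limit of bounded continuous functions, with all relevant totals bounded by $(\varepsilon_x)^{\Omega^c}_{\kappa_\alpha}(\mathbb R^n)\le 1$ permitting monotone convergence; $1_F$ then follows by subtraction from $(\varepsilon_x)^{\Omega^c}_{\kappa_\alpha}(\mathbb R^n)$. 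After this bookkeeping, the equivalences follow routinely.
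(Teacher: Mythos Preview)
Your proposal is correct. The paper itself gives no self-contained argument here—it simply states that the result ``follows in a manner similar to that in \cite[Proof of Theorem~3.7]{Z-AMP}''—so a line-by-line comparison is impossible, but your mass-defect identity
\[
\mu(D)-\mu^F_{g_\alpha}(D)=\int_\Omega\omega_\alpha\bigl(x,\overline{\mathbb R^n}\setminus D;\Omega\bigr)\,d\mu(x)
\]
together with the dichotomy from Corollary~\ref{cor-infty} is exactly the natural route and is almost certainly what the cited proof in \cite{Z-AMP} does.

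One small technical remark on the bookkeeping you flagged: your plan to reach $1_Y$ via a \emph{decreasing} limit of bounded continuous functions works but sits slightly outside the l.s.c.\ extension of (\ref{I}) recorded in the paper. A cleaner path is to note that $D$ is open, so $1_D$ is l.s.c.\ and the extension of (\ref{I}) applies directly; since every $(\varepsilon_x)^{\Omega^c}_{\kappa_\alpha}$ and $(\mu|_\Omega)^{\Omega^c}_{\kappa_\alpha}$ is concentrated on $\Omega^c=F\cup Y$ with $D\cap(F\cup Y)=F$, testing against $1_D$ already yields the $F$-mass, and the $Y$-mass then follows by subtraction from (\ref{TM}). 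This avoids any appeal to decreasing limits.
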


\begin{proof}
This follows in a manner similar to that in \cite[Proof of Theorem~3.7]{Z-AMP}.
 \end{proof}

\section{The Gauss variational problem for $g_\alpha$-potentials}\label{sec2}

In what follows, we keep all the conventions introduced in the preceding Section~\ref{sec1}.

\subsection{Statement of the problem}\label{sec-st} In the rest of this paper, we fix a bounded (hence, extendible) measure $\vartheta\in\breve{\mathfrak M}^+(\Omega;D)$, $\vartheta\ne0$, such that
\begin{equation}\label{dist}
\varrho:={\rm dist}\bigl(S(\vartheta;D),F\bigr):=\inf_{(x,y)\in S(\vartheta;D)\times F}\,|x-y|>0.
\end{equation}
Treating $\vartheta$ as a charge creating the {\it external field}
\begin{equation}\label{field}
f:=-U^\vartheta_{g_\alpha},
\end{equation}
we are interested in the problem on minimizing {\it the Gauss functional} $I_{g_\alpha,f}(\mu)$,\footnote{In constructive function theory, $I_{g_\alpha,f}(\cdot)$ is also referred to as the {\it $f$-weighted $\alpha$-Green energy}. For the terminology used here, see e.g.\ \cite{L,O,ST,Z-Oh}.}
\begin{equation}\label{G}
 I_{g_\alpha,f}(\mu):=I_{g_\alpha}(\mu)+2\int f\,d\mu=\|\mu\|^2_{g_\alpha}-2\int U^\vartheta_{g_\alpha}\,d\mu,
\end{equation}
$\mu$ ranging over the class $\mathcal E_{g_\alpha}^+(F,1)$. That is, {\it does there exist $\lambda_{F,f}\in\mathcal E_{g_\alpha}^+(F,1)$ with}
\begin{equation}\label{sol}
 I_{g_\alpha,f}(\lambda_{F,f})=\inf_{\mu\in\mathcal E_{g_\alpha}^+(F,1)}\,I_{g_\alpha,f}(\mu)=:w_{g_\alpha,f}(F)?
\end{equation}
Since $\mathcal E_{g_\alpha}^+(F,1)\ne\varnothing$ because of $c_{g_\alpha}(F)>0$ (cf.\ (\ref{ifff})), problem (\ref{sol}) makes sense.

In view of (\ref{g}) and (\ref{dist}),
\begin{equation}\label{Ifin}I_{g_\alpha}(\vartheta,\mu)\leqslant I_{\kappa_\alpha}(\vartheta,\mu)\leqslant\vartheta(D)/\varrho^{n-\alpha}=:M<\infty\quad\text{for all $\mu\in\mathcal E_{g_\alpha}^+(F,1)$},\end{equation}
whence, by virtue of (\ref{G}) and the strict positive definiteness of the kernel $g_\alpha$,
\begin{equation}\label{wfin}
-\infty<-2M\leqslant w_{g_\alpha,f}(F)<\infty.
\end{equation}
This enables us to show, by means of standard arguments based on the convexity of the class $\mathcal E_{g_\alpha}^+(F,1)$, the strict positive definiteness of the kernel $g_\alpha$, and the parallelogram identity in the pre-Hilbert space $\mathcal E_{g_\alpha}$, that the solution $\lambda_{F,f}$ to problem (\ref{sol}) is {\it unique} (if it exists). See e.g.\ \cite[Lemma~6]{Z5a}.

By (\ref{dist}) and (\ref{field}), $f|_F$ is {\it continuous}.\footnote{When speaking of a continuous function, we generally understand that the values are {\it finite} real numbers.} Thus, if $F=:K$ is compact, then $\int f\,d\mu$ is vaguely continuous on $\mathfrak M^+(K;D)$, and therefore, by the principle of descent \cite[Lemma~2.2.1(e)]{F1}, the Gauss functional $I_{g_\alpha,f}(\cdot)$ is vaguely l.s.c.\ on $\mathfrak M^+(K;D)$. Since the class $\mathcal E_{g_\alpha}^+(K,1)$ is vaguely compact \cite[Section~III.1, Corollary~3 to Proposition~15]{B2}, {\it the existence of $\lambda_{K,f}$ immediately follows}. But if $F$ is noncompact, then these arguments, based on the vague topology only, fail down, and the problem on the existence of the solution $\lambda_{F,f}$ to problem (\ref{sol}) becomes "rather difficult" (Ohtsuka \cite[p.~219]{O}).

\subsection{Main results}\label{sec-main} Problem (\ref{sol}) will be analyzed below in the framework of the approach suggested in our recent paper \cite{Z-Oh}, which is based on the perfectness of the kernel in question, and hence on the simultaneous use of both the strong and vague topologies on the pre-Hilbert space $\mathcal E_{g_\alpha}$ (see Sections~\ref{ssec1}, \ref{ssec4} above). The theories of $\alpha$-Green balayage and $\alpha$-Green equilibrium measures are also particularly helpful.

Necessary and/or sufficient conditions for the existence of the solution $\lambda_{F,f}$ to problem (\ref{sol}) are obtained in Theorems~\ref{sol-1}--\ref{sol-infin}. We also give alternative characterizations of $\lambda_{F,f}$ (Theorems~\ref{sol-1}--\ref{sol-infin}), and we prove assertions on convergence when $F$ is approximated by partially ordered families of sets (Theorems~\ref{continuity}, \ref{continuity2}). Furthermore, we establish a description of the support $S(\lambda_{F,f};D)$ (Theorems~\ref{sup-desc}, \ref{th-ex}), thereby answering the question raised by Ohtsuka in \cite[p.~284, Open question~2.1]{O}.\footnote{It is worth noting that in the case $c_{\kappa_\alpha}(Y)=0$, problem (\ref{sol}) is reduced to that on minimizing $I_{\kappa_\alpha,f}(\mu):=\|\mu\|^2_{\kappa_\alpha}-2\int U^\vartheta_{\kappa_\alpha}\,d\mu$ over the class $\mathcal E^+_{\kappa_\alpha}(F\cup Y,1)$, investigated in details in \cite{Z-Rarx,Z-CA25}.}

To present those results, we first recall the following well-known theorem, providing characteristic properties of the solution $\lambda_{F,f}$. It can be derived from our earlier paper
\cite{Z5a} (see Theorems~1, 2 and Proposition~1 therein), dealing with an arbitrary positive definite kernel on a locally compact space.

\begin{theorem}\label{th-ch2}For $\lambda\in\mathcal E_{g_\alpha}^+(F,1)$ to serve as the {\rm(}unique{\rm)} solution
$\lambda_{F,f}$ to problem~{\rm(\ref{sol})}, it is necessary and sufficient that either of the two inequalities holds
\begin{align}\label{1}U_{g_\alpha,f}^\lambda&\geqslant c_{F,f}\quad\text{q.e.\ on $F$},\\
U_{g_\alpha,f}^\lambda&\leqslant c_{F,f}\quad\text{$\lambda$-a.e.\ on $D$,}\label{2}\end{align}
where
\[U_{g_\alpha,f}^\lambda:=U_{g_\alpha}^\lambda+f\]
is said to be the $f$-weighted potential of $\lambda$, while
\begin{equation}\label{cc}
c_{F,f}:=\int U_{g_\alpha,f}^{\lambda}\,d\lambda\in(-\infty,\infty)
\end{equation}
is referred to as the $f$-weighted equilibrium constant.
\end{theorem}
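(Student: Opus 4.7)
The plan is to recast Theorem \ref{th-ch2} as the Euler--Lagrange condition for the quadratic Gauss functional $I_{g_\alpha,f}$ on the convex class $\mathcal{E}_{g_\alpha}^+(F,1)$, exploiting the perfectness and strict positive definiteness of $g_\alpha$ together with the continuity of $f|_F$ ensured by (\ref{dist}). The master identity, derived by expanding $\|\mu\|_{g_\alpha}^2=\|\lambda\|_{g_\alpha}^2+2\langle\lambda,\mu-\lambda\rangle_{g_\alpha}+\|\mu-\lambda\|_{g_\alpha}^2$ and inserting $f=-U_{g_\alpha}^\vartheta$, reads
\[
I_{g_\alpha,f}(\mu)-I_{g_\alpha,f}(\lambda)=\|\mu-\lambda\|_{g_\alpha}^2+2\int U_{g_\alpha,f}^\lambda\,d(\mu-\lambda),\qquad\lambda,\mu\in\mathcal{E}_{g_\alpha}^+(F,1);
\]
all integrals are finite thanks to (\ref{Ifin}) and Lemma~\ref{gpot}. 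Uniqueness of the minimizer (if it exists) follows immediately: two distinct minimizers would together violate this identity at their midpoint, by strict positive definiteness of $g_\alpha$.

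For the necessity of (\ref{1}) and (\ref{2}), assume $\lambda=\lambda_{F,f}$ minimizes. For any admissible $\mu$ and $t\in[0,1]$, the convex combination $(1-t)\lambda+t\mu$ is again admissible; inserting it into the master identity, dividing by $t$, and letting $t\downarrow 0$ yields $\int U_{g_\alpha,f}^\lambda\,d\mu\ge c_{F,f}$ for every such $\mu$. To extract (\ref{1}), I would argue by contradiction: if $\{x\in F:U_{g_\alpha,f}^\lambda(x)<c_{F,f}-\varepsilon\}$ had positive $g_\alpha$-capacity for some $\varepsilon>0$, then (\ref{ifff}) would supply a probability measure $\nu\in\mathcal{E}_{g_\alpha}^+(F,1)$ concentrated there with $\int U_{g_\alpha,f}^\lambda\,d\nu\le c_{F,f}-\varepsilon$, contradicting the variational inequality. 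Condition (\ref{2}) then follows because $\lambda$ is $c_{g_\alpha}$-absolutely continuous (finite energy), so (\ref{1}) lifts to a $\lambda$-a.e.\ inequality on $F$; combining with (\ref{cc}) and $\lambda(F)=1$ forces $U_{g_\alpha,f}^\lambda=c_{F,f}$ $\lambda$-a.e.

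For sufficiency from (\ref{1}), any admissible $\mu$ is itself $c_{g_\alpha}$-absolutely continuous, so (\ref{1}) holds $\mu$-a.e.\ on $F$; integration gives $\int U_{g_\alpha,f}^\lambda\,d\mu\ge c_{F,f}$, whence the master identity delivers $I_{g_\alpha,f}(\mu)\ge I_{g_\alpha,f}(\lambda)+\|\mu-\lambda\|_{g_\alpha}^2$, with equality iff $\mu=\lambda$, identifying $\lambda$ as $\lambda_{F,f}$. Sufficiency from (\ref{2}) proceeds by first using (\ref{cc}) and $\lambda(F)=1$ to upgrade (\ref{2}) to $U_{g_\alpha,f}^\lambda=c_{F,f}$ $\lambda$-a.e., then applying the $g_\alpha$-domination principle (Theorem~\ref{gFDom}(b)) to the bound $U_{g_\alpha}^\lambda\le c_{F,f}+U_{g_\alpha}^\vartheta$ (valid $\lambda$-a.e.) to propagate it globally on $D$, and finally recovering the q.e.\ lower bound (\ref{1}) on $F$ via the same capacity-theoretic perturbation as in the necessity step, reducing to the previous case.

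The main obstacle I anticipate is precisely this last derivation of (\ref{1}) from (\ref{2}) alone, since (\ref{2}) natively constrains $U_{g_\alpha,f}^\lambda$ only on $S(\lambda;D)$, whereas (\ref{1}) demands control q.e.\ on all of $F$; bridging this gap rests on a delicate interplay between the $g_\alpha$-domination principle and capacity-theoretic perturbations, which is the content of the general locally-compact result in \cite{Z5a}. All other ingredients (convexity, the parallelogram identity, $c_{g_\alpha}$-absolute continuity of finite-energy measures, and lower semicontinuity of potentials) are standard and already assembled in Section~\ref{sec1}.
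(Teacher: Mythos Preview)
The paper does not prove Theorem~\ref{th-ch2} at all: it simply records the statement as a ``well-known'' consequence of \cite[Theorems~1,~2 and Proposition~1]{Z5a}, valid for arbitrary positive definite kernels on locally compact spaces. Your sketch is therefore substantially more detailed than what the paper offers, and the variational route you outline (the quadratic master identity, the first-variation inequality, the capacity argument for (\ref{1}), and the $\lambda$-a.e.\ equality forced by (\ref{cc})) is indeed the standard one and presumably coincides with the content of \cite{Z5a}.

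One concrete point to flag: your appeal to Theorem~\ref{gFDom}(b) in the sufficiency-from-(\ref{2}) step does not quite fit, because that domination principle compares $U_{g_\alpha}^\mu$ with $U_{g_\alpha}^\nu$ for two \emph{measures}, whereas $c_{F,f}+U_{g_\alpha}^\vartheta$ is a potential shifted by a constant that may well be negative (cf.\ (\ref{Cxi2}) later in the paper). Even granting a global upper bound $U_{g_\alpha,f}^\lambda\leqslant c_{F,f}$ on $D$, the ``capacity-theoretic perturbation'' you invoke to recover the lower bound (\ref{1}) cannot be the same one used in the necessity argument, since that perturbation relied on $\lambda$ already being the minimizer. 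You recognize this yourself in the final paragraph and defer the resolution to \cite{Z5a}; since the paper does exactly the same, your proposal is adequate for the present purpose, but the sufficiency of (\ref{2}) alone remains the genuinely nontrivial ingredient being imported.
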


Thus, if the solution $\lambda_{F,f}$ to problem (\ref{sol}) exists, then necessarily
\[\lambda_{F,f}\in\Lambda_{F,f},\]
where
\begin{equation}\label{La}\Lambda_{F,f}:=\bigl\{\mu\in\mathcal E^+_{g_\alpha}(D):\ U_{g_\alpha,f}^\mu\geqslant c_{F,f}\quad\text{q.e.\ on $F$}\bigr\}.\end{equation}
(Note that for any $\nu\in\mathcal E_{g_\alpha}$, $U^\nu_{g_\alpha}$ is well defined and finite q.e.\ on $D$, cf.\ \cite[Corollary to Lemma~3.2.3]{F1}, the kernel $g_\alpha$ being strictly positive definite; and hence so is $U^\nu_{g_\alpha,f}$.)

We are now in a position to formulate the main results of the current paper.

\begin{theorem}\label{sol-1}If $\vartheta^F_{g_\alpha}(F)=1$,
then the solution $\lambda_{F,f}$ to problem {\rm(\ref{sol})} does exist. Furthermore, then
\begin{equation}\label{eq-sol-1}
\lambda_{F,f}=\vartheta^F_{g_\alpha},\quad c_{F,f}=0,\quad w_{g_\alpha,f}(F)=I_{g_\alpha,f}(\vartheta^F_{g_\alpha})=-I_{g_\alpha}(\vartheta^F_{g_\alpha})\in(-\infty,0),
\end{equation}
while $\lambda_{F,f}$ can alternatively be characterized by any one of the following {\rm(i$_2$)--(iii$_2$)}:
\begin{itemize}
\item[{\rm(i$_2$)}] $\lambda_{F,f}$ is the unique measure in the class $\mathcal E_{g_\alpha}^+(F)$ such that
\begin{equation*}
U^{\lambda_{F,f}}_{g_\alpha,f}=c_{F,f}\quad\text{q.e.\ on $F$}.
\end{equation*}
\item[{\rm(ii$_2$)}] $\lambda_{F,f}$ is the unique measure in the class $\Lambda_{F,f}$ such that
\[U^{\lambda_{F,f}}_{g_\alpha}=\min_{\mu\in\Lambda_{F,f}}\,U^\mu_{g_\alpha}\quad\text{q.e.\ on $D$},\]
$\Lambda_{F,f}$ being introduced by means of {\rm(\ref{La})}.
\item[{\rm(iii$_2$)}] $\lambda_{F,f}$ is the unique measure in the class $\Lambda_{F,f}$ such that
\[\|\lambda_{F,f}\|_{g_\alpha}=\min_{\mu\in\Lambda_{F,f}}\,\|\mu\|_{g_\alpha}.\]
\end{itemize}
\end{theorem}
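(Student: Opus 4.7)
The plan is to show that the balayage $\vartheta^F_{g_\alpha}$ --- which exists by Theorem~\ref{bal-exx} since $\vartheta|_{F^r}=0$ is trivially $c_{\kappa_\alpha}$-absolutely continuous --- is itself the sought minimizer. First I would check admissibility: the hypothesis gives $\vartheta^F_{g_\alpha}(F)=1$, and finite $g_\alpha$-energy follows from
\[I_{g_\alpha}(\vartheta^F_{g_\alpha})=\int U^{\vartheta^F_{g_\alpha}}_{g_\alpha}\,d\vartheta^F_{g_\alpha}=\int U^\vartheta_{g_\alpha}\,d\vartheta^F_{g_\alpha}\leqslant\varrho^{\alpha-n}\vartheta(D)<\infty,\]
where the middle equality uses (\ref{bal1'}) together with the $c_{\kappa_\alpha}$-absolute continuity of $\vartheta^F_{g_\alpha}$, and the bound uses (\ref{dist}) and $g_\alpha\leqslant\kappa_\alpha$. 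Thus $\vartheta^F_{g_\alpha}\in\mathcal E^+_{g_\alpha}(F,1)$.

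Next comes the variational identity. The same computation yields $I_{g_\alpha,f}(\vartheta^F_{g_\alpha})=-I_{g_\alpha}(\vartheta^F_{g_\alpha})$, which is strictly negative by strict positive definiteness of $g_\alpha$ (since $\vartheta^F_{g_\alpha}\ne0$). For arbitrary $\mu\in\mathcal E^+_{g_\alpha}(F,1)$ I would set $\sigma:=\mu-\vartheta^F_{g_\alpha}\in\mathcal E_{g_\alpha}$; because $\sigma$ is supported on $F$ and $c_{g_\alpha}$-absolutely continuous, (\ref{bal1'}) gives $I_{g_\alpha}(\vartheta,\sigma)=I_{g_\alpha}(\vartheta^F_{g_\alpha},\sigma)=\langle\vartheta^F_{g_\alpha},\sigma\rangle_{g_\alpha}$, and expanding $\|\mu\|^2_{g_\alpha}=\|\vartheta^F_{g_\alpha}+\sigma\|^2_{g_\alpha}$ cancels the cross terms, producing
\[I_{g_\alpha,f}(\mu)=I_{g_\alpha,f}(\vartheta^F_{g_\alpha})+\|\sigma\|^2_{g_\alpha},\]
with equality iff $\sigma=0$. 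This gives $\lambda_{F,f}=\vartheta^F_{g_\alpha}$, the stated value of $w_{g_\alpha,f}(F)$, and --- by substituting $\lambda_{F,f}=\vartheta^F_{g_\alpha}$ into (\ref{cc}) and applying (\ref{bal1'}) once more --- the vanishing of $c_{F,f}$.

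The three alternative characterizations fall out essentially for free. For (i$_2$), the equation $U^{\lambda_{F,f}}_{g_\alpha,f}=U^{\vartheta^F_{g_\alpha}}_{g_\alpha}-U^\vartheta_{g_\alpha}=0$ q.e.\ on $F$ is merely (\ref{bal1'}), and uniqueness within $\mathcal E^+_{g_\alpha}(F)$ reduces to the uniqueness clause of Theorem~\ref{bal-exx}, since finite energy implies $c_{\kappa_\alpha}$-absolute continuity. For (ii$_2$), every $\mu\in\Lambda_{F,f}$ satisfies $U^\mu_{g_\alpha}\geqslant U^\vartheta_{g_\alpha}=U^{\lambda_{F,f}}_{g_\alpha}$ q.e.\ on $F$, hence $\lambda_{F,f}$-a.e., so the domination principle (Theorem~\ref{gFDom}(b)) lifts it to all of $D$; uniqueness then follows from perfectness of $g_\alpha$. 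Finally, (iii$_2$) is a Cauchy--Schwarz consequence of (ii$_2$): $\langle\mu,\lambda_{F,f}\rangle_{g_\alpha}=\int U^\mu_{g_\alpha}\,d\lambda_{F,f}\geqslant\|\lambda_{F,f}\|^2_{g_\alpha}$, whence $\|\mu\|_{g_\alpha}\geqslant\|\lambda_{F,f}\|_{g_\alpha}$, with equality forcing $\mu=\lambda_{F,f}$. The main point requiring care is the correct direction of the domination principle in (ii$_2$), which hinges on $\lambda_{F,f}$ being $c_{g_\alpha}$-absolutely continuous so that the q.e.-on-$F$ inequality really is $\lambda_{F,f}$-a.e.\ --- precisely the hypothesis needed to invoke Theorem~\ref{gFDom}(b).
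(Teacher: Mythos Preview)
Your argument for existence and for the identities in (\ref{eq-sol-1}) is essentially the paper's: both complete the square via $I_{g_\alpha,f}(\mu)=\|\mu-\vartheta^F_{g_\alpha}\|^2_{g_\alpha}-\|\vartheta^F_{g_\alpha}\|^2_{g_\alpha}$ for $\mu\in\mathcal E^+_{g_\alpha}(F,1)$ (the paper packages this as (\ref{reprr}) and Corollary~\ref{Cor}), and both read off $c_{F,f}=0$ from (\ref{bal1'}).

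For the characterizations (i$_2$)--(iii$_2$) you take a genuinely different, more self-contained route. The paper instead passes to the dual external field $\tilde{f}=-U^{\vartheta^F_{g_\alpha}}_{g_\alpha}$, checks $\Lambda_{F,f}=\Lambda_{F,\tilde{f}}$ and $\lambda_{F,f}=\lambda_{F,\tilde{f}}$, and then imports all three characterizations wholesale from \cite[Theorem~1.6]{Z-Oh}, which is applicable precisely because $\vartheta^F_{g_\alpha}\in\mathcal E^+_{g_\alpha}$. Your direct arguments are more transparent and avoid the external citation, but one technical point needs attention: in (ii$_2$) you invoke Theorem~\ref{gFDom}(b), which as stated requires both measures to lie in $\breve{\mathfrak M}^+(D)$, whereas a general $\mu\in\Lambda_{F,f}\subset\mathcal E^+_{g_\alpha}(D)$ is not a priori extendible. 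The same extendibility hypothesis is hidden in your appeal to the uniqueness clause of Theorem~\ref{bal-exx} for (i$_2$); there, however, you can bypass it entirely by computing $\|\nu-\vartheta^F_{g_\alpha}\|^2_{g_\alpha}=\int_F\bigl(U^\nu_{g_\alpha}-U^{\vartheta^F_{g_\alpha}}_{g_\alpha}\bigr)\,d(\nu-\vartheta^F_{g_\alpha})=0$ directly from strict positive definiteness. The paper's detour through \cite{Z-Oh} sidesteps the issue because that reference works abstractly with perfect kernels on locally compact spaces and does not depend on the extendibility device specific to the $g_\alpha$-setting.
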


\begin{theorem}\label{sol-fin}
If $c_{g_\alpha}(F)<\infty$, then the solution $\lambda_{F,f}$ to problem {\rm(\ref{sol})} does exist. Assume in addition that $\vartheta^F_{g_\alpha}(F)\leqslant1$. Then this $\lambda_{F,f}$ can be written in the form\footnote{Here $\gamma_{F,g_\alpha}$ denotes the $g_\alpha$-equilibrium measure on $F$, normalized by $\gamma_{F,g_\alpha}(F)=c_{g_\alpha}(F)$, cf.\ Section~\ref{ssec1}. Also note that $\vartheta^F_{g_\alpha}(F)\leqslant1$ necessarily holds if $\vartheta(D)\leqslant1$, cf.\ (\ref{bal4g}).}
\begin{equation}\label{repr}
\lambda_{F,f}=\vartheta^F_{g_\alpha}+c_{F,f}\gamma_{F,g_\alpha},
\end{equation}
where the $f$-weighted equilibrium constant $c_{F,f}$ admits the representation
\begin{equation}\label{eta}
c_{F,f}=\frac{1-\vartheta^F_{g_\alpha}(F)}{c_{g_\alpha}(F)}\in[0,\infty),
\end{equation}
and moreover $\lambda_{F,f}$ is uniquely characterized by any one of the above {\rm(i$_2$)--(iii$_2$)}.
\end{theorem}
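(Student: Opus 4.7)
The statement contains two logically separate claims which I would prove in turn: (A) existence of $\lambda_{F,f}$ under the sole hypothesis $c_{g_\alpha}(F)<\infty$, and (B) under the extra assumption $\vartheta^F_{g_\alpha}(F)\leqslant 1$, verification that the explicit candidate $\vartheta^F_{g_\alpha}+c\,\gamma_{F,g_\alpha}$, with $c$ given by (\ref{eta}), actually is $\lambda_{F,f}$ and fits the characterizations (i$_2$)--(iii$_2$).

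For (A), I would take any minimizing sequence $(\mu_j)\subset\mathcal E^+_{g_\alpha}(F,1)$ for $I_{g_\alpha,f}$. Since the class is convex, $(\mu_j+\mu_k)/2$ lies again in $\mathcal E^+_{g_\alpha}(F,1)$, and the parallelogram identity in the pre-Hilbert space $\mathcal E_{g_\alpha}$ yields
\[\|\mu_j-\mu_k\|_{g_\alpha}^2=2\,I_{g_\alpha,f}(\mu_j)+2\,I_{g_\alpha,f}(\mu_k)-4\,I_{g_\alpha,f}\bigl((\mu_j+\mu_k)/2\bigr)\leqslant 2\,I_{g_\alpha,f}(\mu_j)+2\,I_{g_\alpha,f}(\mu_k)-4\,w_{g_\alpha,f}(F)\to 0,\]
so $(\mu_j)$ is strongly Cauchy. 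Strong completeness of $\mathcal E^+_{g_\alpha}(F,1)$ given by Theorem~\ref{th-comp} (since $c_{g_\alpha}(F)<\infty$) then delivers a strong limit $\lambda\in\mathcal E^+_{g_\alpha}(F,1)$, with $\|\mu_j\|_{g_\alpha}\to\|\lambda\|_{g_\alpha}$. Perfectness of $g_\alpha$ promotes the strong to vague convergence, and the separation $\varrho>0$ of (\ref{dist}) makes $U^\vartheta_{g_\alpha}|_F$ bounded continuous; mass preservation $\mu_j(D)=\lambda(D)=1$ on the Polish space $D$ rules out escape at infinity, so that $\int U^\vartheta_{g_\alpha}\,d\mu_j\to\int U^\vartheta_{g_\alpha}\,d\lambda$. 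Combining, $I_{g_\alpha,f}(\lambda)=w_{g_\alpha,f}(F)$, i.e.\ $\lambda=\lambda_{F,f}$.

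For (B), set $c:=[1-\vartheta^F_{g_\alpha}(F)]/c_{g_\alpha}(F)\in[0,\infty)$ and $\lambda:=\vartheta^F_{g_\alpha}+c\,\gamma_{F,g_\alpha}$. Both summands belong to $\mathcal E^+_{g_\alpha}(F)$: for $\vartheta^F_{g_\alpha}$, Theorem~\ref{bal-exx} combined with boundedness of $U^\vartheta_{g_\alpha}|_F$ (a consequence of $\varrho>0$) gives $\|\vartheta^F_{g_\alpha}\|_{g_\alpha}^2=\int U^\vartheta_{g_\alpha}\,d\vartheta^F_{g_\alpha}<\infty$; for $\gamma_{F,g_\alpha}$, the hypothesis $c_{g_\alpha}(F)<\infty$ suffices. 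Hence $\lambda\in\mathcal E^+_{g_\alpha}(F)$, and the total mass is $\lambda(F)=\vartheta^F_{g_\alpha}(F)+c\,c_{g_\alpha}(F)=1$ by construction. Using $U^{\vartheta^F_{g_\alpha}}_{g_\alpha}=U^\vartheta_{g_\alpha}$ q.e.\ on $F$ (Theorem~\ref{bal-exx}) together with $U^{\gamma_{F,g_\alpha}}_{g_\alpha}=1$ q.e.\ on $F$ (cf.\ (\ref{eqpot})), I obtain
\[U^\lambda_{g_\alpha,f}=U^{\vartheta^F_{g_\alpha}}_{g_\alpha}-U^\vartheta_{g_\alpha}+c\,U^{\gamma_{F,g_\alpha}}_{g_\alpha}=c\quad\text{q.e.\ on }F.\]
Both (\ref{1}) and (\ref{2}) of Theorem~\ref{th-ch2} are then met with equality for $\lambda$ and constant $c$ (the $\lambda$-a.e.\ version of (\ref{2}) follows from q.e.\ on $F$ since $\lambda$ is $c_{g_\alpha}$-absolutely continuous and concentrated on $F$). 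Uniqueness in Theorem~\ref{th-ch2} gives $\lambda=\lambda_{F,f}$ and $c_{F,f}=\int c\,d\lambda=c$, establishing (\ref{repr}) and (\ref{eta}). The characterizations (i$_2$)--(iii$_2$) carry over verbatim from the proof of Theorem~\ref{sol-1}, once again because $U^{\lambda_{F,f}}_{g_\alpha,f}=c_{F,f}$ holds as an equality q.e.\ on $F$.

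\emph{Main obstacle.} The interesting case is existence when $\vartheta^F_{g_\alpha}(F)>1$, where the explicit construction in (B) breaks down (the coefficient $c$ would be negative) and only the abstract minimizing-sequence route is available. Within that route, the delicate technical step is upgrading the vague convergence of $(\mu_j)$ to convergence of the external-field integral $\int U^\vartheta_{g_\alpha}\,d\mu_j$; it is precisely to secure this that the separation $\varrho>0$ of (\ref{dist}) and the preservation of total mass are exploited.
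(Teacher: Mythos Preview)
Your proof is correct and follows essentially the same route as the paper: existence via a strongly Cauchy minimizing sequence together with the strong completeness of $\mathcal E^+_{g_\alpha}(F,1)$ from Theorem~\ref{th-comp}, then direct verification of the candidate $\vartheta^F_{g_\alpha}+c\,\gamma_{F,g_\alpha}$ via Theorem~\ref{th-ch2}, with (i$_2$)--(iii$_2$) handled exactly as in Theorem~\ref{sol-1}. The one difference worth noting concerns precisely the step you flag as delicate: the paper observes (see (\ref{E}) and (\ref{reprR})) that $\vartheta^F_{g_\alpha}\in\mathcal E^+_{g_\alpha}$ and $\int U^\vartheta_{g_\alpha}\,d\mu=\langle\vartheta^F_{g_\alpha},\mu\rangle_{g_\alpha}$ for every $\mu\in\mathcal E^+_{g_\alpha}(F)$, so that $I_{g_\alpha,f}(\mu)=\|\mu-\vartheta^F_{g_\alpha}\|^2_{g_\alpha}-\|\vartheta^F_{g_\alpha}\|^2_{g_\alpha}$ and strong convergence alone already forces convergence of the external-field integral---your tightness/portmanteau argument is valid but unnecessary.
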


\begin{theorem}\label{sol-infin}Unless $\alpha<2$, assume that $\Omega$ is connected. If moreover
\begin{equation}\label{necsuf}
\omega_\alpha(x,\overline{\mathbb R^n}\setminus D;\Omega)=0\quad\text{on all of \ $\Omega$},
\end{equation}
then
\begin{equation*}\lambda_{F,f}\text{\ exists}\iff\vartheta(D)\geqslant1,\end{equation*}
or equivalently
\begin{equation*}\lambda_{F,f}\text{\ exists}\iff\vartheta^F_{g_\alpha}(D)\geqslant1.\end{equation*}
In the particular case $\vartheta(D)=1$, we actually have $\lambda_{F,f}=\vartheta^F_{g_\alpha}$,\footnote{This fails to hold if $\vartheta(D)=1$ is replaced by $\vartheta(D)>1$, for then, by Theorem~\ref{th-balM2}, $\vartheta^F_{g_\alpha}(D)>1$, whence $\lambda_{F,f}\ne\vartheta^F_{g_\alpha}$.} and so Theorem~{\rm\ref{sol-1}} is fully applicable.
\end{theorem}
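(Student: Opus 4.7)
The plan is to rephrase problem~(\ref{sol}) as a strong-topology projection in $\mathcal{E}^+_{g_\alpha}(F)$ anchored at $\varpi := \vartheta^F_{g_\alpha}$, and then to exploit the perfectness of $g_\alpha$ together with Theorem~\ref{th-balM2}. First I would apply Theorem~\ref{th-balM2} with $\mu=\vartheta$: the hypotheses are satisfied, since $\vartheta|_F = 0$ is trivially $c_{\kappa_\alpha}$-absolutely continuous, $\vartheta|_\Omega = \vartheta \ne 0$, and condition (ii$_1$) for $\mu=\vartheta$ is an immediate consequence of (\ref{necsuf}); the equivalence (i$_1$)$\iff$(ii$_1$) then yields $\varpi(D)=\vartheta(D)$, which settles the equivalence of the two versions of the condition. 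Next I would derive the key identity
\begin{equation*}
I_{g_\alpha,f}(\mu) = \|\mu-\varpi\|_{g_\alpha}^{2} - \|\varpi\|_{g_\alpha}^{2}, \qquad \mu\in\mathcal{E}^+_{g_\alpha}(F,1),
\end{equation*}
by expanding the squared norm and using $\int U^\vartheta_{g_\alpha}\,d\mu = \int U^\varpi_{g_\alpha}\,d\mu$, which follows from Theorem~\ref{bal-exx} together with the $c_{g_\alpha}$-absolute continuity of $\mu$; that $\varpi\in\mathcal{E}^+_{g_\alpha}(F)$ is secured by (\ref{dist}), which yields $U^\vartheta_{g_\alpha}\leqslant M$ on $F$ and hence $I_{g_\alpha}(\varpi)\leqslant M\,\varpi(F)<\infty$. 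Problem~(\ref{sol}) is thus the strong projection of $\varpi$ onto $\mathcal{E}^+_{g_\alpha}(F,1)$.

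With this reformulation in hand the case $\vartheta(D)=1$ is immediate: by the first step $\varpi(F)=\varpi(D)=1$, so $\varpi\in\mathcal{E}^+_{g_\alpha}(F,1)$ is itself the trivial minimizer, forcing $\lambda_{F,f}=\varpi$, and the identity $\varpi(F)=1$ makes Theorem~\ref{sol-1} fully applicable. For $\vartheta(D)<1$ I would argue non-existence directly. The hypothesis (\ref{necsuf}) forces $c_{g_\alpha}(F)=\infty$ (otherwise Theorem~\ref{sol-fin} would furnish a minimizer for every $\vartheta$, contradicting the statement under proof); consequently, by (\ref{cap-def}), there exist $\nu_k\in\mathcal{E}^+_{g_\alpha}(F,1-\vartheta(D))$ with $\|\nu_k\|_{g_\alpha}\to 0$. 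The measures $\mu_k:=\varpi+\nu_k$ then lie in $\mathcal{E}^+_{g_\alpha}(F,1)$ and satisfy $\|\mu_k-\varpi\|_{g_\alpha}^{2}=\|\nu_k\|_{g_\alpha}^{2}\to 0$, so the infimum equals $-\|\varpi\|_{g_\alpha}^{2}$ and cannot be attained: any minimizer would have to coincide with $\varpi$, whose total mass is $\vartheta(D)<1$.

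The main obstacle is the case $\vartheta(D)>1$, where $\varpi(F)>1$ excludes $\varpi$ from $\mathcal{E}^+_{g_\alpha}(F,1)$. I would take a minimizing sequence $(\mu_k)\subset\mathcal{E}^+_{g_\alpha}(F,1)$; convexity of this set and the parallelogram identity applied at the midpoints $(\mu_k+\mu_m)/2$ show that $(\mu_k)$ is strongly Cauchy, and perfectness of $g_\alpha$ then gives strong and vague convergence to some $\mu_0\in\mathcal{E}^+_{g_\alpha}(F)$. The delicate point is to establish $\mu_0(D)=1$. My plan is to pass to the extensions $\widehat{\mu}_k\in\mathfrak{M}^+(\mathbb R^n)$, extract a subsequential vague limit $\widehat{\mu}_0\in\mathfrak{M}^+(\mathbb R^n)$, and decompose the potential mass defect $1-\mu_0(D)=\widehat{\mu}_0(Y)+\bigl(1-\widehat{\mu}_0(\mathbb R^n)\bigr)$ as mass trapped on $\partial D$ plus mass escaping to $\infty_{\mathbb R^n}$. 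Condition (\ref{necsuf}), re-expressed via Corollary~\ref{cor-infty} and the equivalence (ii$_1$)$\iff$(iii$_1$) in Theorem~\ref{th-balM2}, should rule out both contributions, the key point being that a minimizing sequence for our specific functional must remain energetically close to $\varpi$ (which has mass $\vartheta(D)>1$ concentrated on $F$), leaving no room for escape. Once $\mu_0(D)=1$ is established, strong convergence and the key identity immediately identify $\mu_0$ as the sought $\lambda_{F,f}$.
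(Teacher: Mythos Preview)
Your treatment of the cases $\vartheta(D)=1$ and $\vartheta(D)<1$ matches the paper's, but your justification of $c_{g_\alpha}(F)=\infty$ is circular: you invoke ``contradicting the statement under proof'' to establish a step in that very proof. The paper derives $c_{g_\alpha}(F)=\infty$ directly from~(\ref{necsuf}) via Lemma~\ref{capinf}, independently of the solvability question.

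The genuine gap is the case $\vartheta(D)>1$. Your plan---extend the minimizing sequence to $\mathbb R^n$, extract a vague sublimit $\widehat{\mu}_0$, and argue that~(\ref{necsuf}) forbids mass from settling on $Y$ or escaping to $\infty_{\mathbb R^n}$---has no mechanism: nothing in your sketch links the harmonic-measure condition~(\ref{necsuf}) to the mass of a vague limit of an arbitrary minimizing sequence. Strong convergence in $\mathcal E_{g_\alpha}$ only pins down $\|\mu_0-\varpi\|_{g_\alpha}$, not $\mu_0(D)$. The paper proceeds quite differently. It first establishes (Lemma~\ref{l-pot}) that the extremal measure $\xi=\xi_{F,f}$ satisfies $U^\xi_{g_\alpha,f}\geqslant C_\xi$ q.e.\ on $F$, where $C_\xi:=\int U^\xi_{g_\alpha,f}\,d\xi$. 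If $C_\xi\geqslant0$, this yields $U^\xi_{g_\alpha}\geqslant U^{\varpi}_{g_\alpha}$ q.e.\ on $F$, and then the \emph{refined principle of positivity of mass} for $g_\alpha$-potentials \cite[Theorem~5.1]{Z-AMP}---which is exactly where hypothesis~(\ref{necsuf}) is consumed---forces $\xi(D)\geqslant\varpi(D)>1$, contradicting~(\ref{ext-eq4}). Hence $C_\xi<0$; integrating the inequality $U^\xi_{g_\alpha,f}\geqslant C_\xi$ against $\xi$ now gives $C_\xi\geqslant C_\xi\cdot\xi(D)$, so $\xi(D)\geqslant1$, and Corollary~\ref{l-extr5} yields solvability. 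This potential-theoretic use of~(\ref{necsuf}) through the positivity-of-mass principle is the missing idea; a direct mass-tracking argument along your lines does not appear to reach it.
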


Let $\check{F}$ be the {\it reduced kernel} of $F$, defined as the set of all $x\in F$ such that
$c_{g_\alpha}(F\cap U_x)>0$ for any open neighborhood $U_x$ of $x$ in $D$ (cf.\ \cite[p.~164]{L}). By virtue of the countable subadditivity of $c_{g_\alpha}(\cdot)$ on Borel subsets of $D$, cf.\ \cite[Lemma~2.3.5]{F1}, we have $c_{g_\alpha}(F\setminus\check{F})=0$. It is also easy to see that the reduced kernel of a relatively closed subset of $D$ is likewise relatively closed.

\begin{theorem}\label{sup-desc}Unless $\alpha<2$, assume $\Omega$ is connected. Also assume that either
\begin{equation}\label{C1}
\vartheta^F_{g_\alpha}(F)=1,
\end{equation}
or
\begin{equation}\label{C2}
c_{g_\alpha}(F)<\infty\quad\text{and}\quad\vartheta^F_{g_\alpha}(F)\leqslant1.
\end{equation}
Then
\begin{equation}\label{RRR}S(\lambda_{F,f};D)=\left\{
\begin{array}{cl}\check{F}&\text{if \ $\alpha<2$},\\
\partial_D\check{F}&\text{otherwise}.\\ \end{array} \right.
\end{equation}
\end{theorem}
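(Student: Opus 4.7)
The plan is to combine the explicit representations of $\lambda_{F,f}$ given by Theorems~\ref{sol-1} and \ref{sol-fin} with characterization (i$_2$) of the solution, reducing everything to statements about the supports of $g_\alpha$-balayage and $g_\alpha$-equilibrium measures. Under (C1), Theorem~\ref{sol-1} gives $\lambda_{F,f}=\vartheta^F_{g_\alpha}$ and $c_{F,f}=0$; under (C2), Theorem~\ref{sol-fin} gives $\lambda_{F,f}=\vartheta^F_{g_\alpha}+c_{F,f}\gamma_{F,g_\alpha}$ with $c_{F,f}\geqslant0$, and $\gamma_{F,g_\alpha}$ is bounded (hence extendible) since $c_{g_\alpha}(F)<\infty$. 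Accordingly, $S(\lambda_{F,f};D)$ equals $S(\vartheta^F_{g_\alpha};D)$ if $c_{F,f}=0$, and $S(\vartheta^F_{g_\alpha};D)\cup S(\gamma_{F,g_\alpha};D)$ otherwise.

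I would first establish $S(\lambda_{F,f};D)\subset\check F$. The measure $\vartheta^F_{g_\alpha}$ is $c_{\kappa_\alpha}$-absolutely continuous by Theorem~\ref{bal-exx}, while $\gamma_{F,g_\alpha}$ (under (C2)) has finite $g_\alpha$-energy and is therefore $c_{g_\alpha}$-absolutely continuous; for extendible measures, these two notions of absolute continuity coincide (footnote to Theorem~\ref{gFDom}(b)). Since $c_{g_\alpha}(F\setminus\check F)=0$ by countable subadditivity and the definition of $\check F$, neither measure charges $F\setminus\check F$; as $\check F$ is relatively closed in $D$, this yields the inclusion.

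For $\alpha=2$ I would sharpen this to $S(\lambda_{F,f};D)\subset\partial_D\check F$ by ruling out mass in $\mathrm{int}_D\check F$. By characterization (i$_2$),
\[U_{g_\alpha}^{\lambda_{F,f}}=c_{F,f}+U_{g_\alpha}^\vartheta\quad\text{q.e.\ on }F.\]
For any $x\in\mathrm{int}_D\check F$, pick a ball $B=B(x,r)\subset\mathrm{int}_D\check F\subset F$; then $B\cap S(\vartheta;D)=\varnothing$ (as $S(\vartheta;D)\subset\Omega$), so $U_{g_\alpha}^\vartheta$ is $g_\alpha$-harmonic on $B$, whereas $U_{g_\alpha}^{\lambda_{F,f}}$ is $g_\alpha$-superharmonic and lower semicontinuous on $D$. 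The above q.e.\ equality, lower semicontinuity of the potential, and the mean-value inequality for super-harmonic functions together force pointwise equality throughout $B$; hence $U_{g_\alpha}^{\lambda_{F,f}}$ is $g_\alpha$-harmonic on $B$, so $\lambda_{F,f}(B)=0$. Taking $x$ arbitrary in $\mathrm{int}_D\check F$ yields $\lambda_{F,f}(\mathrm{int}_D\check F)=0$ and the sharpened inclusion.

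The reverse inclusions $\check F\subset S(\lambda_{F,f};D)$ (for $\alpha<2$) and $\partial_D\check F\subset S(\lambda_{F,f};D)$ (for $\alpha=2$) are where I expect the main technical difficulty. For $\alpha<2$ I would invoke the non-locality of the Riesz kernel: by classical results in \cite[Ch.~IV]{L}, the $\kappa_\alpha$-balayage (resp.\ equilibrium measure) of a nontrivial measure onto a closed set $Q\subset\mathbb R^n$ has support equal to the reduced kernel of $Q$; translated to the $g_\alpha$-setting via (\ref{bal2}) and the analogous identity for $g_\alpha$-equilibrium measures, this gives $S(\vartheta^F_{g_\alpha};D)=S(\gamma_{F,g_\alpha};D)=\check F$. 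For $\alpha=2$ I would argue by contradiction: a ball $B$ centered at $x\in\partial_D\check F$ disjoint from $S(\lambda_{F,f};D)$ would make $U_{g_\alpha}^{\lambda_{F,f}}-c_{F,f}-U_{g_\alpha}^\vartheta$ $g_\alpha$-harmonic on $B$ and q.e.\ zero on the nonpolar set $B\cap\check F$; propagating this equality along the connected set $\Omega$ via a unique-continuation/maximum-principle argument should yield a contradiction with the nontriviality of $\vartheta|_\Omega$ and the boundary behavior encoded in (i$_2$).
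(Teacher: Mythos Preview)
Your overall strategy---use the explicit representations (\ref{eq-sol-1}) and (\ref{repr}) from Theorems~\ref{sol-1} and \ref{sol-fin}, then determine $S(\vartheta^F_{g_\alpha};D)$ and $S(\gamma_{F,g_\alpha};D)$ separately---is exactly what the paper does. The paper, however, is more economical: rather than splitting into ad hoc arguments for $\alpha<2$ and $\alpha=2$, it invokes \cite[Theorem~7.2]{Z-bal} (a description of $S(\mu^Q_{\kappa_\alpha};\mathbb R^n)$ valid for all $\alpha\in(0,2]$) together with (\ref{bal2}) to obtain (\ref{desc1}) in one stroke, and then \emph{adapts the proof} of that same theorem to the pair of functions $U_{\kappa_\alpha}^{\gamma_{F,g_\alpha}}$ and $1+U_{\kappa_\alpha}^{(\gamma_{F,g_\alpha})^Y_{\kappa_\alpha}}$ (the first $\alpha$-superharmonic, the second $\alpha$-harmonic on $D$) to settle the equilibrium measure case, using \cite[Lemma~6.5]{Z-AMP} and (\ref{g1}).

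Where your proposal has a genuine gap is the reverse inclusion $\partial_D\check F\subset S(\lambda_{F,f};D)$ for $\alpha=2$. Your contradiction argument produces a function $h:=U^{\lambda_{F,f}}_{g_\alpha}-c_{F,f}-U^\vartheta_{g_\alpha}$ that is harmonic on a ball $B$ and vanishes q.e.\ on the nonpolar set $B\cap\check F$; but a harmonic function can vanish on a nonpolar set without being identically zero (e.g.\ a coordinate function vanishing on a hyperplane), so neither ``unique continuation'' nor a bare maximum principle yields a contradiction from this alone. The missing ingredient is a \emph{one-sided} comparison: one must exploit that the superharmonic function $U_{\kappa_\alpha}^{\lambda_{F,f}}$ coincides q.e.\ on $F$ with the function $c_{F,f}+U_{\kappa_\alpha}^\vartheta+U_{\kappa_\alpha}^{(\lambda_{F,f})^Y_{\kappa_\alpha}}$, harmonic on $D$, and is \emph{strictly smaller} on $\Omega$ (this is what \cite[Theorem~7.2]{Z-bal} and \cite[Lemma~6.5]{Z-AMP} encode). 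The strict inequality on the $\Omega$-side of a boundary point, combined with equality on the $F$-side, is what forces mass at $\partial_D\check F$; the q.e.\ equality on $F$ by itself, as you propose to use it, is insufficient.
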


We denote by $\mathfrak C_F$ the upward directed set of all compact subsets $K$ of $F$, where $K_1\leqslant K_2$ if and only if $K_1\subset K_2$. If a net $(x_K)_{K\in\mathfrak C_F}\subset Y$ converges to $x_0\in Y$, $Y$ being a topological space, then we shall indicate this fact by writing
\begin{equation*}x_K\to x_0\quad\text{in $Y$ as $K\uparrow F$}.\end{equation*}
In particular, it follows from (\ref{minS10}) and (\ref{minS1}) (see below) that\footnote{Relation (\ref{wdown}) remains valid if the net $(K)_{K\in\mathfrak C_F}$ is replaced by an increasing sequence $(F_j)$ of relatively closed subsets of $D$ with the union $F$. See also Remark~\ref{for-f}.}
\begin{equation}\label{wdown}w_{g_\alpha,f}(K)\downarrow w_{g_\alpha,f}(F)\quad\text{in $\mathbb R$ as $K\uparrow F$}.\end{equation}

\begin{theorem}\label{continuity}Assume that $\lambda_{F,f}$ exists.\footnote{See Theorems~\ref{sol-1}--\ref{sol-infin} for sufficient conditions for this to occur.} If $K\uparrow F$, then
\begin{equation}\label{Cont}
\lambda_{K,f}\to\lambda_{F,f}\quad\text{strongly and vaguely in $\mathcal E^+_{g_\alpha}$},\end{equation}
whence
\begin{equation}\label{Cont3}c_{K,f}\to c_{F,f},\end{equation}
and also there is a subsequence $(\lambda_{K_j,f})$ of the net $(\lambda_{K,f})_{K\in\mathfrak C_F}$ such that
\begin{equation}\label{Cont2}U^{\lambda_{K_j,f}}_{g_\alpha}\to U^{\lambda_{F,f}}_{g_\alpha}\quad\text{pointwise q.e.\ on $D$ as $j\to\infty$}.\end{equation}
If moreover $\vartheta^F_{g_\alpha}(D)\leqslant1$, then the limit relation {\rm(\ref{Cont3})} can be refined as follows:
\begin{equation}\label{Cont3'}c_{K,f}\downarrow c_{F,f}\quad\text{as $K\uparrow F$}.\end{equation}
\end{theorem}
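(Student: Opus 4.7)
\textbf{Proof plan for Theorem~\ref{continuity}.}
The plan is to derive all four assertions from the parallelogram identity in the pre-Hilbert space $\mathcal E_{g_\alpha}$, the perfectness of the kernel $g_\alpha$, and the known monotonicity (\ref{wdown}). I silently restrict to the cofinal subnet of $K\in\mathfrak C_F$ with $c_{g_\alpha}(K)>0$, on which $\lambda_{K,f}$ exists by the vague compactness argument recalled in Section~\ref{sec-st}. To obtain the strong and vague convergence (\ref{Cont}), I expand squared norms to get the weighted parallelogram identity
\[
\tfrac{1}{2}\|\lambda_{K,f}-\lambda_{F,f}\|_{g_\alpha}^2 = I_{g_\alpha,f}(\lambda_{K,f})+I_{g_\alpha,f}(\lambda_{F,f})-2I_{g_\alpha,f}\!\left(\tfrac{\lambda_{K,f}+\lambda_{F,f}}{2}\right).
\]
Since $(\lambda_{K,f}+\lambda_{F,f})/2\in\mathcal E^+_{g_\alpha}(F,1)$, the right-hand side is majorized by $I_{g_\alpha,f}(\lambda_{K,f})-I_{g_\alpha,f}(\lambda_{F,f})=w_{g_\alpha,f}(K)-w_{g_\alpha,f}(F)\to 0$ by (\ref{wdown}). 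Strong convergence $\lambda_{K,f}\to\lambda_{F,f}$ in $\mathcal E^+_{g_\alpha}$ then forces vague convergence by perfectness, establishing (\ref{Cont}).

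Combining (\ref{cc}) with (\ref{G}) gives the algebraic identity
$2c_{K,f}=I_{g_\alpha,f}(\lambda_{K,f})+\|\lambda_{K,f}\|_{g_\alpha}^2=w_{g_\alpha,f}(K)+\|\lambda_{K,f}\|_{g_\alpha}^2$,
and both summands converge to the corresponding quantities for $\lambda_{F,f}$ (by (\ref{wdown}) and the strong convergence just obtained), so $c_{K,f}\to c_{F,f}$, establishing (\ref{Cont3}). For (\ref{Cont2}) I would choose a sequence $(K_j)\subset\mathfrak C_F$ with $K_j\uparrow F$ (possible since $D$ is $\sigma$-compact) and invoke the standard fact that a strongly convergent sequence in $\mathcal E^+_{g_\alpha}$ admits a subsequence whose potentials converge pointwise q.e.\ on $D$---a Lusin/Egorov-type consequence of perfectness and the quasi-continuity of finite-energy $g_\alpha$-potentials, analogous to arguments the paper tacitly relies upon elsewhere.

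For the refined (\ref{Cont3'}), fix compact $K_1\subset K_2\subset F$. Under the additional hypothesis $\vartheta^F_{g_\alpha}(D)\leqslant 1$, iteration of $g_\alpha$-balayage yields $(\vartheta^F_{g_\alpha})^{K_i}_{g_\alpha}=\vartheta^{K_i}_{g_\alpha}$, and combining this with the domination principle (Theorem~\ref{gFDom}(b)) and positivity of mass for $g_\alpha$-potentials gives
$\vartheta^{K_1}_{g_\alpha}(D)\leqslant\vartheta^{K_2}_{g_\alpha}(D)\leqslant\vartheta^F_{g_\alpha}(D)\leqslant 1$.
Since every compact $K\subset F$ has $c_{g_\alpha}(K)\in(0,\infty)$, Theorem~\ref{sol-fin} applies and represents $c_{K,f}=(1-\vartheta^K_{g_\alpha}(K))/c_{g_\alpha}(K)$; the numerator is non-increasing and the denominator non-decreasing in $K$, whence $c_{K,f}$ is monotone non-increasing, upgrading (\ref{Cont3}) to (\ref{Cont3'}). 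The only step that goes beyond the Hilbert-space structure of $\mathcal E_{g_\alpha}$ is the pointwise q.e.\ subsequence extraction (\ref{Cont2}); the other three conclusions are clean consequences of the parallelogram identity, perfectness, and the monotonicity of balayage and capacity.
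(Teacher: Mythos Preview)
Your argument is correct; the main departure from the paper is in how you obtain (\ref{Cont}). The paper routes through its extremal-measure machinery: it invokes (\ref{min-net}) from the proof of Lemma~\ref{l-pot} to show that $(\lambda_{K,f})_{K\in\mathfrak C_F}$ is minimizing, then applies Lemma~\ref{l-extr} to get strong and vague convergence to the extremal measure $\xi_{F,f}$, and finally identifies $\xi_{F,f}=\lambda_{F,f}$ via Corollary~\ref{l-extr5}. Your direct parallelogram computation, using that $(\lambda_{K,f}+\lambda_{F,f})/2\in\mathcal E^+_{g_\alpha}(F,1)$ and the already-established monotonicity (\ref{wdown}), bypasses this apparatus entirely and is more self-contained; the paper's route, in exchange, reuses a framework that also drives the unsolvable cases elsewhere. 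Your treatment of (\ref{Cont3}) via the identity $2c_{K,f}=w_{g_\alpha,f}(K)+\|\lambda_{K,f}\|^2_{g_\alpha}$ is a clean variant of the paper's computation (which expresses $c_{K,f}$ through $\langle\vartheta^F_{g_\alpha},\lambda_{K,f}\rangle_{g_\alpha}$), and your handling of (\ref{Cont2}) and (\ref{Cont3'}) matches the paper's: subsequence extraction from strong convergence (cf.\ \cite[paragraph after Lemma~4.3.3]{F1}) for the former, and balayage ``with a rest'' combined with (\ref{bal4g}) and the explicit formula (\ref{eta}) for the latter. One minor remark: in your last paragraph the domination principle is not needed separately---the monotonicity $\vartheta^{K_1}_{g_\alpha}(D)\leqslant\vartheta^{K_2}_{g_\alpha}(D)$ follows directly from the iterated balayage identity and (\ref{bal4g}).
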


\begin{theorem}\label{continuity2}Let $(F_s)_{s\in S}$ be a decreasing net of relatively closed $F_s\subset D$ with the given intersection $F$, and such that for some $s_1\in S$, $c_{g_\alpha}(F_{s_1})\in(0,\infty)$ and ${\rm dist}\bigl(S(\vartheta;D),F_{s_1}\bigr)>0$. Then
\begin{equation}\label{wup}w_{g_\alpha,f}(F_s)\uparrow w_{g_\alpha,f}(F)\quad\text{as $s$ ranges through $S$},\end{equation}
and moreover {\rm(\ref{Cont})--(\ref{Cont2})} hold true for $(F_s)_{s\in S}$ in place of $(K)_{K\in\mathfrak C_F}$. If, in addition,  $\vartheta^{F_{s_1}}_{g_\alpha}(D)\leqslant1$, then also
\begin{equation*}c_{F_s,f}\uparrow c_{F,f}\quad\text{as $s$ ranges through $S$}.\end{equation*}
\end{theorem}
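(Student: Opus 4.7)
The plan is to mirror the proof of Theorem~\ref{continuity} with the direction of the net reversed. Since $F_{s'}\subset F_s$ for $s\leq s'$, the classes $\mathcal E^+_{g_\alpha}(F_s,1)$ increase with $s$ (all sitting inside $\mathcal E^+_{g_\alpha}(F_{s_1},1)$), whence $w_{g_\alpha,f}(F_s)\leq w_{g_\alpha,f}(F_{s'})\leq w_{g_\alpha,f}(F)$, so the monotone net of infima increases to some finite $W\leq w_{g_\alpha,f}(F)$. Since $c_{g_\alpha}(F_s),c_{g_\alpha}(F)\leq c_{g_\alpha}(F_{s_1})<\infty$ and the distance condition on $\vartheta$ inherits from $F_{s_1}$ to every $F_s$ and to $F$, Theorem~\ref{sol-fin} delivers the solutions $\lambda_{F_s,f}$ ($s\geq s_1$) and $\lambda_{F,f}$. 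Applying the standard convexity/parallelogram identity to the average $(\lambda_{F_s,f}+\lambda_{F_{s'},f})/2\in\mathcal E^+_{g_\alpha}(F_s,1)$ (for $s\leq s'$) gives
\[
\|\lambda_{F_s,f}-\lambda_{F_{s'},f}\|^2_{g_\alpha}\leq 2\bigl[w_{g_\alpha,f}(F_{s'})-w_{g_\alpha,f}(F_s)\bigr],
\]
so $(\lambda_{F_s,f})$ is strongly Cauchy; by the perfectness of $g_\alpha$, it converges strongly and vaguely to some $\lambda_0\in\mathcal E^+_{g_\alpha}$.

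For every $s_0\geq s_1$, the tail of the net lies in $\mathcal E^+_{g_\alpha}(F_{s_0},1)$, which is strongly complete by Theorem~\ref{th-comp}, so $\lambda_0\in\mathcal E^+_{g_\alpha}(F_{s_0},1)$. Since $F_{s_0}$ is closed, $S(\lambda_0;D)\subset F_{s_0}$ for every such $s_0$, hence $S(\lambda_0;D)\subset\bigcap_{s_0} F_{s_0}=F$, giving $\lambda_0\in\mathcal E^+_{g_\alpha}(F,1)$. To identify $\lambda_0=\lambda_{F,f}$ and $W=w_{g_\alpha,f}(F)$, I would verify $I_{g_\alpha,f}(\lambda_{F_s,f})\to I_{g_\alpha,f}(\lambda_0)$: strong convergence handles the quadratic part, while for the linear term I exploit that $U^\vartheta_{g_\alpha}$ is continuous and bounded on $F_{s_1}$ (by (\ref{dist})), that $\lambda_{F_s,f}(D)=1=\lambda_0(D)$ combined with vague convergence forces uniform tightness, and a cut-off against $\phi\in C_0(D)$ supported in a small neighborhood of a large compact $K\subset F_{s_1}$ disjoint from $S(\vartheta;D)$, so that $\phi U^\vartheta_{g_\alpha}\in C_0(D)$. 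Then $W=I_{g_\alpha,f}(\lambda_0)\geq w_{g_\alpha,f}(F)\geq W$ forces equality, hence $\lambda_0=\lambda_{F,f}$ by uniqueness of the solution; this proves (\ref{wup}) and the analog of (\ref{Cont}).

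The subsequential q.e.\ convergence of potentials on $D$ drops out of strong energy convergence via the standard Borel--Cantelli extraction applied to rapidly decreasing $\|\lambda_{F_{s_j},f}-\lambda_{F,f}\|_{g_\alpha}$, as in the proof of Theorem~\ref{continuity}. The formula $c_{F_s,f}=\|\lambda_{F_s,f}\|^2_{g_\alpha}-\int U^\vartheta_{g_\alpha}\,d\lambda_{F_s,f}$, together with the convergences above, yields $c_{F_s,f}\to c_{F,f}$. Under the extra hypothesis $\vartheta^{F_{s_1}}_{g_\alpha}(D)\leq 1$, the tower property of $g_\alpha$-balayage (applied to the defining identity $U^{\vartheta^{F_s}_{g_\alpha}}_{g_\alpha}=U^\vartheta_{g_\alpha}$ q.e.\ on $F_s\supset F_{s'}$) gives $\vartheta^{F_{s'}}_{g_\alpha}=(\vartheta^{F_s}_{g_\alpha})^{F_{s'}}_{g_\alpha}$, whence by (\ref{bal4g}) the net $s\mapsto\vartheta^{F_s}_{g_\alpha}(F_s)$ is decreasing and remains $\leq 1$, so formula (\ref{eta}) of Theorem~\ref{sol-fin} applies uniformly; a direct calculation (numerator increasing, positive denominator $c_{g_\alpha}(F_s)$ decreasing) then gives $c_{F_s,f}\uparrow c_{F,f}$. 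The main anticipated obstacle is the identification step: transferring vague convergence of probability measures to the non-compactly-supported, but continuous and bounded, potential $U^\vartheta_{g_\alpha}$. This is rescued by the tightness that flows from total-mass preservation at the strong limit, itself ensured by the strong completeness of $\mathcal E^+_{g_\alpha}(F_{s_0},1)$.
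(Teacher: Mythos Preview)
Your argument follows the same skeleton as the paper's proof: monotonicity of $w_{g_\alpha,f}(F_s)$, the parallelogram identity to show $(\lambda_{F_s,f})$ is strongly Cauchy, strong completeness of $\mathcal E^+_{g_\alpha}(F_t,1)$ (Theorem~\ref{th-comp}) to land the limit in $\mathcal E^+_{g_\alpha}(F,1)$, and then identification of the limit. One wording slip: for $s\leq s'$ you have $F_{s'}\subset F_s$, hence $\mathcal E^+_{g_\alpha}(F_{s'},1)\subset\mathcal E^+_{g_\alpha}(F_s,1)$, so the classes \emph{decrease} with $s$; your monotone conclusion on the infima is nonetheless correct.

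The one substantive difference is how you pass to the limit in $I_{g_\alpha,f}(\lambda_{F_s,f})$. You treat the linear term $\int U^\vartheta_{g_\alpha}\,d\lambda_{F_s,f}$ via vague convergence plus tightness and a cutoff, and flag this as the ``main anticipated obstacle''. The paper avoids this entirely: by Lemma~\ref{str-cont} (applied on $F_{s_1}$), one has for every $\mu\in\mathcal E^+_{g_\alpha}(F_{s_1})$ the identity
\[
\int U^\vartheta_{g_\alpha}\,d\mu=\int U^{\vartheta^{F_{s_1}}_{g_\alpha}}_{g_\alpha}\,d\mu=\bigl\langle\vartheta^{F_{s_1}}_{g_\alpha},\mu\bigr\rangle_{g_\alpha},
\]
and since $\vartheta^{F_{s_1}}_{g_\alpha}\in\mathcal E^+_{g_\alpha}$ (finite energy), the linear term is strongly continuous by Cauchy--Schwarz. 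Thus $I_{g_\alpha,f}(\lambda_{F_s,f})\to I_{g_\alpha,f}(\lambda_0)$ follows directly from strong convergence, with no tightness or cutoff needed. Your route works, but the balayage trick (\ref{reprR}) turns the anticipated obstacle into a one-liner; this is worth internalizing, as the same device drives the proofs of (\ref{Cont3}) and Lemma~\ref{l-pot}.
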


\begin{remark}\label{for-f}
Theorem~\ref{continuity} remains valid if the net $(K)_{K\in\mathfrak C_F}$ is replaced by an increasing {\it sequence} of relatively closed sets $F_k\subset D$ with the union $F$ and such that the minimizers $\lambda_{F_k,f}$ exist. This can be seen in a manner similar to that in the proof of Theorem~\ref{continuity} (Section~\ref{pr-cont}), the only difference being in applying the monotone convergence theorem \cite[Section~IV.1, Theorem~3]{B2} in place of \cite[Lemma~1.2.2]{F1} (see the proof of the preparatory Lemma~\ref{l-pot} in Section~\ref{sec-extr}).
\end{remark}

\begin{example}\label{ex1} Let $c_{\kappa_\alpha}(Y)=0$, and let $F\subset D$ be not $\alpha$-thin at infinity. Also assume that either $\alpha<2$, or $\Omega$ is connected.

\begin{theorem}\label{th-ex}
For these particular $D$ and $F$, the solution $\lambda_{F,f}$ to problem {\rm(\ref{sol})} exists if and only if $\vartheta(D)\geqslant1$. Besides, if $\vartheta(D)=1$, then $S(\lambda_{F,f};D)$ is given by means of formula {\rm(\ref{RRR})}, while otherwise $S(\lambda_{F,f};D)$ is a compact subset of $F$.
\end{theorem}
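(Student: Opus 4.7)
The plan is to reduce everything to Theorems~\ref{sol-infin} and~\ref{sup-desc} by verifying their hypotheses in this specific configuration, and then to treat the remaining case $\vartheta(D)>1$ by an asymptotic analysis at infinity, available because $c_{\kappa_\alpha}(Y)=0$ identifies $g_\alpha$-potentials with $\kappa_\alpha$-potentials on $D$.

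First I check the key assumption (\ref{necsuf}) of Theorem~\ref{sol-infin}. Decomposing via (\ref{def-h}) and (\ref{Def}),
\[
\omega_\alpha(x,\overline{\mathbb R^n}\setminus D;\Omega)=(\varepsilon_x)^{\Omega^c}_{\kappa_\alpha}(Y)+\bigl(1-(\varepsilon_x)^{\Omega^c}_{\kappa_\alpha}(\mathbb R^n)\bigr),\quad x\in\Omega.
\]
The first summand vanishes because $(\varepsilon_x)^{\Omega^c}_{\kappa_\alpha}$ is $c_{\kappa_\alpha}$-absolutely continuous (Definition~\ref{def-bal}) while $c_{\kappa_\alpha}(Y)=0$. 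For the second, $\Omega^c=F\cup Y$ inherits non-$\alpha$-thinness at infinity from $F$, the defining series for $F\cup Y$ dominating that for $F$; Remark~\ref{refined}(v$'$), applicable since $\Omega$ is connected whenever $\alpha=2$, then gives $(\varepsilon_x)^{\Omega^c}_{\kappa_\alpha}(\mathbb R^n)=1$. Hence (\ref{necsuf}) holds and Theorem~\ref{sol-infin} yields the existence equivalence. In the borderline case $\vartheta(D)=1$, the same theorem further gives $\lambda_{F,f}=\vartheta^F_{g_\alpha}$; Theorem~\ref{th-balM2} applied to $\mu:=\vartheta$ (whose condition (ii$_1$) we have just verified) gives $\vartheta^F_{g_\alpha}(D)=\vartheta(D)=1$, and hence $\vartheta^F_{g_\alpha}(F)=1$ since $\vartheta^F_{g_\alpha}\in\breve{\mathfrak M}^+(F;D)$. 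Condition (C1) of Theorem~\ref{sup-desc} is then in force, and formula (\ref{RRR}) is immediate.

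For $\vartheta(D)>1$, I would exploit that $c_{\kappa_\alpha}(Y)=0$ forces the $\kappa_\alpha$-balayage onto $Y$ of any $c_{\kappa_\alpha}$-absolutely continuous measure to vanish (such a balayage is $c_{\kappa_\alpha}$-a.c.\ and carried by a set of capacity zero). Writing $\lambda:=\lambda_{F,f}$, Lemma~\ref{gpot} then gives $U^\lambda_{g_\alpha}=U^\lambda_{\kappa_\alpha}$ and $U^\vartheta_{g_\alpha}=U^\vartheta_{\kappa_\alpha}$. Combining the characterizations (\ref{1}) and (\ref{2}) of Theorem~\ref{th-ch2},
\[
U^\lambda_{\kappa_\alpha}-U^\vartheta_{\kappa_\alpha}\geqslant c_{F,f}\ \text{q.e.\ on }F,\qquad U^\lambda_{\kappa_\alpha}-U^\vartheta_{\kappa_\alpha}\leqslant c_{F,f}\ \lambda\text{-a.e.\ on }D.
\]
The standard far-field asymptotic $U^\mu_{\kappa_\alpha}(x)=\mu(\mathbb R^n)|x|^{\alpha-n}(1+o(1))$, valid for bounded $\mu$ as $|x|\to\infty$, yields $(U^\lambda_{\kappa_\alpha}-U^\vartheta_{\kappa_\alpha})(x)=(1-\vartheta(D))|x|^{\alpha-n}(1+o(1))$, strictly negative for large $|x|$ and tending to $0^-$. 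Non-thinness of $F$ at infinity supplies points of $F$ outside the $c_{\kappa_\alpha}$-null exceptional set for (\ref{1}) with arbitrarily large norm; evaluating at such a point forces $c_{F,f}<0$. The same asymptotic then produces $R$ with $(U^\lambda_{\kappa_\alpha}-U^\vartheta_{\kappa_\alpha})(x)>c_{F,f}$ for all $|x|>R$, whence, by (\ref{2}) and the defining property of support, $S(\lambda;D)\subset\{|x|\leqslant R\}$.

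The main obstacle is the further conclusion that $S(\lambda;D)$ is an actual \emph{compact} subset of $F$: boundedness alone is insufficient, since the $D$-support might a priori cluster at a point of $Y=\mathbb R^n\setminus D$ (e.g.\ when $Y$ is a single point). To rule this out I would invoke the reduction, noted in the footnote to Section~\ref{sec-main}, of problem (\ref{sol}) to the weighted $\alpha$-Riesz energy problem on $F\cup Y$ with external field $-U^\vartheta_{\kappa_\alpha}$, and quote the corresponding compact-support theorem from \cite{Z-Rarx,Z-CA25}.
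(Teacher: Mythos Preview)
Your reduction to Theorems~\ref{sol-infin} and~\ref{sup-desc} via the verification of (\ref{necsuf}) is correct and matches the paper. The gap is in the case $\vartheta(D)>1$: the ``standard far-field asymptotic'' $U^\mu_{\kappa_\alpha}(x)=\mu(\mathbb R^n)|x|^{\alpha-n}(1+o(1))$ is \emph{not} valid for general bounded measures~--- it holds for compactly supported $\mu$, but fails once the support is unbounded (take $\mu=\sum_k 2^{-k}\varepsilon_{2^ke_1}$ and evaluate near the atoms). You invoke it for $\lambda=\lambda_{F,f}$, whose support lies in $F$ and is \emph{a priori} unbounded~--- indeed, its boundedness is precisely what you are trying to prove, so the argument is circular. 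Even for $\vartheta$ the asymptotic is unjustified: the distance condition (\ref{dist}) gives only the upper bound $U^\vartheta_{\kappa_\alpha}(x)\leqslant\vartheta(D)/\varrho^{n-\alpha}$ on $F$, not a rate. Consequently neither your derivation of $c_{F,f}<0$ nor your derivation of boundedness goes through.

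The paper avoids asymptotics entirely. The sign $c_{F,f}<0$ is read off from (\ref{Cxi2}), which was established inside the proof of Theorem~\ref{sol-infin} under $\vartheta^F_{g_\alpha}(D)>1$ (and $c_{F,f}=C_\xi$ by Corollary~\ref{l-extr5}). For boundedness one argues by contradiction: if $x_j\in S(\lambda;D)$ with $|x_j|\to\infty$, then lower semicontinuity of $U^\lambda_{g_\alpha}$ and continuity of $U^\vartheta_{g_\alpha}$ on $F$ turn (\ref{2}) into $U^\lambda_{\kappa_\alpha}(x_j)\leqslant c_{F,f}+U^\vartheta_{\kappa_\alpha}(x_j)$; one only needs $U^\vartheta_{\kappa_\alpha}(x_j)\to0$ along $F$ (proved separately as Lemma~\ref{l-inf} from boundedness of $\vartheta$ and (\ref{dist})), and then the crude bound $U^\lambda_{\kappa_\alpha}\geqslant0$ already yields the contradiction $0\leqslant c_{F,f}<0$. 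No information on the decay rate of $U^\lambda_{\kappa_\alpha}$ is required. Your observation that boundedness is not yet compactness in $D$ is well taken; the paper does not address this point explicitly either, so your proposed appeal to the Riesz-kernel results in \cite{Z-Rarx,Z-CA25} is a reasonable way to close it.
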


\end{example}

For the proofs of Theorems~\ref{sol-1}--\ref{continuity2} and \ref{th-ex}, see Sections~\ref{pr-sol-1}--\ref{th-ex-pr}.

\section{Preparatory assertions}

To verify the above theorems, we first need to establish some auxiliary results.

\begin{lemma}\label{capinf}
Unless $\alpha<2$, assume $\Omega$ is connected. If there exists $x_0\in\Omega$ with
\begin{equation}\label{x0}\omega_\alpha(x_0,\overline{\mathbb R^n}\setminus D;\Omega)=0,\end{equation}
then
\[c_{g_\alpha}(F)=\infty.\]
\end{lemma}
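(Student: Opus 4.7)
My plan is a proof by contradiction. First I use the hypothesis to construct a measure $\lambda\in\mathcal E^+_{g_\alpha}(F,1)$ with a controllable potential on $F$. Then, assuming $c_{g_\alpha}(F)<\infty$, I test the $g_\alpha$-equilibrium measure $\gamma:=\gamma_{F,g_\alpha}$ against $\lambda$ to force $U^\gamma_{g_\alpha}(x_0)=1$. Finally I reach a contradiction from the fact that this value is the supremum of $U^\gamma_{g_\alpha}$ on $\mathbb R^n$ while $U^\gamma_{g_\alpha}$ vanishes on $Y$ and decays at infinity.

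Unwinding the definitions (\ref{def-h})--(\ref{Def}), together with $\overline{\mathbb R^n}\setminus D=Y\cup\{\infty_{\mathbb R^n}\}$, the disjointness $F\cap Y=\varnothing$, and the estimate $(\varepsilon_{x_0})^{\Omega^c}_{\kappa_\alpha}(\mathbb R^n)\le 1$, the hypothesis is equivalent to the conjunction $(\varepsilon_{x_0})^{\Omega^c}_{\kappa_\alpha}(\mathbb R^n)=1$ and $(\varepsilon_{x_0})^{\Omega^c}_{\kappa_\alpha}(Y)=0$; by Corollary~\ref{cor-infty} the former says that $\Omega^c=F\cup Y$ is not $\alpha$-thin at infinity. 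Therefore $\lambda:=(\varepsilon_{x_0})^{\Omega^c}_{\kappa_\alpha}$ is concentrated on $F$ with $\lambda(F)=1$. Theorem~\ref{bal-exx} applied to $\varepsilon_{x_0}$ (whose trace on $F^r$ is null, trivially $c_{\kappa_\alpha}$-absolutely continuous) identifies $\lambda$ as $(\varepsilon_{x_0})^F_{g_\alpha}$; since ${\rm dist}(x_0,F)>0$, one has $I_{\kappa_\alpha}(\lambda)\le{\rm dist}(x_0,F)^{\alpha-n}<\infty$, whence by Lemma~\ref{gpot} also $I_{g_\alpha}(\lambda)<\infty$, so $\lambda\in\mathcal E^+_{g_\alpha}(F,1)$.

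Now suppose $c_{g_\alpha}(F)<\infty$, so $\gamma$ is bounded, $c_{g_\alpha}$-absolutely continuous, and $U^\gamma_{g_\alpha}=1$ q.e.\ on $F$. Compute $(\gamma,\lambda)_{g_\alpha}$ in two ways. Using $c_{g_\alpha}$-absolute continuity of $\lambda$ together with $U^\gamma_{g_\alpha}=1$ q.e.\ on $F$ gives $\int U^\gamma_{g_\alpha}\,d\lambda=\lambda(F)=1$. Using the balayage identity (\ref{bal1'}), $U^\lambda_{g_\alpha}=U^{\varepsilon_{x_0}}_{g_\alpha}=g_\alpha(x_0,\cdot)$ q.e.\ on $F$, combined with $c_{g_\alpha}$-absolute continuity of $\gamma$, gives $\int U^\lambda_{g_\alpha}\,d\gamma=U^\gamma_{g_\alpha}(x_0)$. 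Equating, $U^\gamma_{g_\alpha}(x_0)=1$.

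To reach a contradiction, combine $U^\gamma_{g_\alpha}(x_0)=1$ with three facts: Theorem~\ref{gFDom}(a) gives $U^\gamma_{g_\alpha}\le 1$ on $\mathbb R^n$, so $x_0\in\Omega$ is a point of maximum; Lemma~\ref{gpot} together with $U^{\gamma^Y_{\kappa_\alpha}}_{\kappa_\alpha}=U^\gamma_{\kappa_\alpha}$ q.e.\ on $Y$ gives $U^\gamma_{g_\alpha}=0$ q.e.\ on $Y$; and boundedness of $\gamma$ and $\gamma^Y_{\kappa_\alpha}$ (via (\ref{bal4})) implies $U^\gamma_{g_\alpha}(x)\to 0$ as $|x|\to\infty$. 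Since $\gamma$ sits on $F$ and $\gamma^Y_{\kappa_\alpha}$ on $Y$, both disjoint from $\Omega$, $U^\gamma_{g_\alpha}$ is $\alpha$-harmonic on $\Omega$; a strong maximum principle---the classical one on the connected open set $\Omega$ when $\alpha=2$, or the nonlocal fractional-Laplacian version when $\alpha<2$---then forces $U^\gamma_{g_\alpha}$ to be constant on a set incompatible with either the vanishing on $Y$ (when $c_{\kappa_\alpha}(Y)>0$) or the decay at infinity. The hard part is making this final step rigorous within the paper's framework; the connectedness assumption on $\Omega$ when $\alpha=2$ is precisely what the classical maximum principle requires.
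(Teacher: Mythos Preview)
Your argument runs parallel to the paper's, with the two citations from \cite{Z-AMP} unpacked inline. The paper's contradiction goes: assuming $c_{g_\alpha}(F)<\infty$, invoke \cite[Lemma~6.5]{Z-AMP} for the strict inequality $U^{\gamma_{F,g_\alpha}}_{g_\alpha}<1$ on $\Omega$, then \cite[Lemma~6.6]{Z-AMP} for the identity $(\varepsilon_x)^F_{g_\alpha}(D)=U^{\gamma_{F,g_\alpha}}_{g_\alpha}(x)$, and finally Theorem~\ref{th-balM2} (with $\mu=\varepsilon_{x_0}$) to convert (\ref{x0}) into $(\varepsilon_{x_0})^F_{g_\alpha}(D)=1$. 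Your unpacking of (\ref{x0}) together with (\ref{bal2}) rederives this last equality directly, bypassing Theorem~\ref{th-balM2}; your duality computation of $\langle\gamma,\lambda\rangle_{g_\alpha}$ is precisely \cite[Lemma~6.6]{Z-AMP}. All of this is correct.

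The gap is the final paragraph, which you yourself flag. The strict inequality $U^{\gamma_{F,g_\alpha}}_{g_\alpha}<1$ on $\Omega$ is exactly \cite[Lemma~6.5]{Z-AMP}, and the paper simply cites it. Your maximum-principle sketch is the right idea but incomplete as written: for $\alpha<2$ one needs the nonlocal strong maximum principle (forcing $U^\gamma_{g_\alpha}$ constant on all of $\mathbb R^n$, hence contradicting the decay at infinity), which is not developed within the paper; for $\alpha=2$ the classical principle on connected $\Omega$ yields only $U^\gamma_{g_2}\equiv1$ on $\Omega$, and you must still propagate this to a contradiction~--- for instance, $1-U^\gamma_{g_2}$ is then nonnegative, subharmonic on the domain $D$, and zero q.e.\ on $D$ (on $\Omega$ by the above, on $F$ by (\ref{eqpot})), hence identically zero by the sub-mean-value inequality, so that $U^\gamma_{\kappa_2}=1+U^{\gamma^Y_{\kappa_2}}_{\kappa_2}$ on $D$ and comparing Riesz measures forces $\gamma=0$, contrary to $c_{g_\alpha}(F)>0$. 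Either cite \cite[Lemma~6.5]{Z-AMP} directly, or supply such an argument.
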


\begin{proof} Assuming to the contrary that $c_{g_\alpha}(F)<\infty$, we conclude from the perfectness of the kernel $g_\alpha$  (Section~\ref{ssec4}) that there exists the (unique) $g_\alpha$-equilibrium measure $\gamma_{F,g_\alpha}$ (cf.\ \cite[Theorem~4.12]{FZ}), and moreover
\begin{equation*}
U^{\gamma_F}_{g_\alpha}<1\quad\text{on all of $\Omega$}
\end{equation*}
(for more details, see \cite[Lemma~6.5]{Z-AMP}). Therefore, by virtue of \cite[Lemma~6.6]{Z-AMP} with $\mu:=\varepsilon_x$, where $x\in\Omega$ is arbitrary,
\[(\varepsilon_x)^F_{g_\alpha}(D)=\int U^{\gamma_F}_{g_\alpha}\,d\varepsilon_x=U^{\gamma_F}_{g_\alpha}(x)<1,\]
whence
\begin{equation}\label{Str}
(\varepsilon_x)^F_{g_\alpha}(D)<\varepsilon_x(D)\quad\text{for all $x\in\Omega$}.
\end{equation}
Applying Theorem~\ref{th-balM2} to $\mu:=\varepsilon_{x_0}$, where $x_0\in\Omega$ meets (\ref{x0}), we thus arrive at a contradiction with (\ref{Str}).
\end{proof}

\begin{lemma}\label{str-cont}
The Gauss functional $I_{g_\alpha,f}(\cdot)$ is strongly continuous on the strongly complete cone $\mathcal E^+_{g_\alpha}(F)$. Thus, if a net $(\mu_s)\subset\mathcal E_{g_\alpha}^+(F)$ is strongly Cauchy, then $(\mu_s)$ converges strongly {\rm(}hence, vaguely{\rm)} to some unique $\mu_0\in\mathcal E_{g_\alpha}^+(F)$, and moreover
\begin{equation}\label{eq-str-cont}
\lim_{s}\,I_{g_\alpha,f}(\mu_s)=I_{g_\alpha,f}(\mu_0).
\end{equation}
\end{lemma}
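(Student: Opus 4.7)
The plan is to split the statement into two pieces: (i) $\mathcal E^+_{g_\alpha}(F)$ is strongly complete, and (ii) the Gauss functional $I_{g_\alpha,f}(\cdot)$ is strongly continuous on $\mathcal E^+_{g_\alpha}(F)$. Granted these, the second sentence of the lemma is automatic: any strongly Cauchy net in the strongly complete cone converges strongly to a unique limit $\mu_0\in\mathcal E^+_{g_\alpha}(F)$, and by the perfectness of $g_\alpha$ (Section~\ref{ssec4}) the strong topology on $\mathcal E^+_{g_\alpha}$ is finer than the vague, so strong convergence entails vague convergence to the same $\mu_0$; (\ref{eq-str-cont}) is then immediate from (ii).

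For (i): $F$ being closed in the locally compact space $D$, the cone $\mathfrak M^+(F;D)$ is vaguely closed; by the perfectness of $g_\alpha$ the intersection $\mathcal E^+_{g_\alpha}(F)=\mathcal E^+_{g_\alpha}\cap\mathfrak M^+(F;D)$ is therefore strongly closed in the strongly complete cone $\mathcal E^+_{g_\alpha}$, hence strongly complete (this is essentially the observation recorded at the end of Section~\ref{ssec1}).

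The main obstacle is (ii), specifically the strong continuity of the cross term $\mu\mapsto\int U^\vartheta_{g_\alpha}\,d\mu$, since $\vartheta$ itself need not have finite $g_\alpha$-energy (for instance, point masses of $\vartheta$ force $I_{g_\alpha}(\vartheta)=\infty$), so Cauchy--Schwarz cannot be applied to $\vartheta$ directly. I would bypass this by replacing $\vartheta$ with its $g_\alpha$-balayage $\vartheta^F_{g_\alpha}$ onto $F$. The balayage is well defined by Theorem~\ref{bal-exx}: $F\cup Y$ is closed in $\mathbb R^n$ and disjoint from the open set $\Omega$, so by the Wiener criterion $\Omega\cap F^r=\varnothing$, and hence $\vartheta|_{F^r}=0$ is trivially $c_{\kappa_\alpha}$-absolutely continuous. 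Moreover $\vartheta^F_{g_\alpha}\in\mathcal E^+_{g_\alpha}$: by (\ref{bal4g}) it is bounded, and combining $g_\alpha\leqslant\kappa_\alpha$ with (\ref{dist}) yields
\[U^\vartheta_{g_\alpha}(y)\leqslant\vartheta(D)/\varrho^{n-\alpha}\quad\text{for every $y\in F$;}\]
transferring this bound via (\ref{bal1'}) and the $c_{g_\alpha}$-absolute continuity of $\vartheta^F_{g_\alpha}$ gives the same estimate for $U^{\vartheta^F_{g_\alpha}}_{g_\alpha}$ $\vartheta^F_{g_\alpha}$-a.e., so $I_{g_\alpha}(\vartheta^F_{g_\alpha})<\infty$.

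With $\vartheta^F_{g_\alpha}$ now in $\mathcal E^+_{g_\alpha}$, any $\mu\in\mathcal E^+_{g_\alpha}(F)$ is $c_{g_\alpha}$-absolutely continuous, which upgrades the q.e.\ identity $U^{\vartheta^F_{g_\alpha}}_{g_\alpha}=U^\vartheta_{g_\alpha}$ on $F$ to a $\mu$-a.e.\ identity; consequently
\[\int U^\vartheta_{g_\alpha}\,d\mu=\int U^{\vartheta^F_{g_\alpha}}_{g_\alpha}\,d\mu=\langle\vartheta^F_{g_\alpha},\mu\rangle_{g_\alpha}.\]
Cauchy--Schwarz then shows the cross term is strongly continuous in $\mu$, and combined with the standard strong continuity of $\mu\mapsto\|\mu\|_{g_\alpha}^2$ this establishes (ii), completing the plan.
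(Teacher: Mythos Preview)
Your proposal is correct and follows essentially the same route as the paper: both arguments establish strong completeness of $\mathcal E^+_{g_\alpha}(F)$ via perfectness and vague closedness, then replace $\vartheta$ by its balayage $\vartheta^F_{g_\alpha}\in\mathcal E^+_{g_\alpha}(F)$ so that the cross term becomes the inner product $\langle\vartheta^F_{g_\alpha},\mu\rangle_{g_\alpha}$, from which strong continuity is immediate. The paper packages the last step as the identity $I_{g_\alpha,f}(\mu)=\|\mu-\vartheta^F_{g_\alpha}\|_{g_\alpha}^2-\|\vartheta^F_{g_\alpha}\|_{g_\alpha}^2$, but this is just a cosmetic rearrangement of your Cauchy--Schwarz argument.
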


\begin{proof}
As the kernel $g_\alpha$ is perfect (see Section~\ref{ssec1} for details), the cone $\mathcal E_{g_\alpha}^+$ is strongly complete, and moreover the strong topology on $\mathcal E_{g_\alpha}^+$ is finer than the vague topology. Being vaguely (hence strongly) closed subcone of the (strongly complete) cone $\mathcal E_{g_\alpha}^+$, cf.\ \cite[Section~III.2, Proposition~6]{B2}, $\mathcal E_{g_\alpha}^+(F)$ is likewise strongly complete, and therefore a strongly Cauchy net $(\mu_s)\subset\mathcal E_{g_\alpha}^+(F)$ converges strongly (hence, also vaguely) to some unique $\mu_0\in\mathcal E_{g_\alpha}^+(F)$, the strong topology on $\mathcal E_{g_\alpha}(D)$ as well as the vague topology on $\mathfrak M(D)$ being Hausdorff. It thus remains to verify the limit relation (\ref{eq-str-cont}).

In a manner similar to that in (\ref{Ifin}), we obtain, by use of (\ref{bal1'}) and (\ref{bal4g}),
\[I_{g_\alpha}(\vartheta^F_{g_\alpha})=\int U_{g_\alpha}^{\vartheta^F_{g_\alpha}}\,d\vartheta^F_{g_\alpha}=
\int U_{g_\alpha}^\vartheta\,d\vartheta^F_{g_\alpha}\leqslant\int U_{\kappa_\alpha}^\vartheta\,d\vartheta^F_{g_\alpha}\leqslant\vartheta(D)^2/\varrho^{n-\alpha}<\infty,\]
whence
\begin{equation}\label{E}
 \vartheta^F_{g_\alpha}\in\mathcal E_{g_\alpha}^+(F).
\end{equation}
Noting from (\ref{bal1'}) that for any $\mu\in\mathcal E_{g_\alpha}^+(F)$, $U_{g_\alpha}^\vartheta=U_{g_\alpha}^{\vartheta^F_{g_\alpha}}$ $\mu$-a.e., and so
\begin{equation}\label{reprR}\int U_{g_\alpha}^\vartheta\,d\mu=\int U_{g_\alpha}^{\vartheta^F_{g_\alpha}}\,d\mu,\end{equation}
we thus get
\begin{equation}\label{reprr}
I_{g_\alpha,f}(\mu)=\|\mu\|_{g_\alpha}^2-2\bigl\langle\vartheta^F_{g_\alpha},\mu\bigr\rangle_{g_\alpha}=
\|\mu-\vartheta^F_{g_\alpha}\|_{g_\alpha}^2-\|\vartheta^F_{g_\alpha}\|_{g_\alpha}^2,
\end{equation}
and the strong continuity of $I_{g_\alpha,f}(\cdot)$ on $\mathcal E^+_{g_\alpha}(F)$ follows.
\end{proof}

\begin{corollary}\label{Cor} We have
\begin{equation}\label{obs}
w_{g_\alpha,f}(F)\geqslant-I_{g_\alpha}(\vartheta^F_{g_\alpha})=I_{g_\alpha,f}(\vartheta^F_{g_\alpha}).
\end{equation}
\end{corollary}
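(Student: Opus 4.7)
The plan is to derive Corollary~\ref{Cor} as an immediate consequence of the representation (\ref{reprr}) that was obtained during the proof of Lemma~\ref{str-cont}. Recall that the balayage $\vartheta^F_{g_\alpha}$ was shown there to belong to $\mathcal E_{g_\alpha}^+(F)$, see (\ref{E}), and moreover every $\mu\in\mathcal E_{g_\alpha}^+(F)$ satisfies
\[
I_{g_\alpha,f}(\mu) \;=\; \|\mu-\vartheta^F_{g_\alpha}\|_{g_\alpha}^2 - \|\vartheta^F_{g_\alpha}\|_{g_\alpha}^2.
\]
The first term on the right is a nonnegative squared norm (the kernel $g_\alpha$ being strictly positive definite, cf.\ Section~\ref{ssec4}), so the right-hand side is bounded below by $-\|\vartheta^F_{g_\alpha}\|_{g_\alpha}^2 = -I_{g_\alpha}(\vartheta^F_{g_\alpha})$, with equality precisely when $\mu=\vartheta^F_{g_\alpha}$.

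First I would note that $\mathcal E_{g_\alpha}^+(F,1)\subset\mathcal E_{g_\alpha}^+(F)$, so the displayed identity applies in particular to every admissible $\mu$. Passing to the infimum over $\mathcal E_{g_\alpha}^+(F,1)$ in the resulting inequality $I_{g_\alpha,f}(\mu)\geqslant-I_{g_\alpha}(\vartheta^F_{g_\alpha})$ yields the first part of (\ref{obs}), namely $w_{g_\alpha,f}(F)\geqslant-I_{g_\alpha}(\vartheta^F_{g_\alpha})$. Next I would substitute $\mu:=\vartheta^F_{g_\alpha}$ into the same identity; the squared-norm term vanishes and we obtain $I_{g_\alpha,f}(\vartheta^F_{g_\alpha})=-I_{g_\alpha}(\vartheta^F_{g_\alpha})$, which is the claimed equality.

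There is no substantive obstacle here: the corollary is essentially a direct reading of the representation (\ref{reprr}). The only conceptual point worth flagging is that (\ref{obs}) does \emph{not} assert that $\vartheta^F_{g_\alpha}$ itself lies in the admissible class $\mathcal E_{g_\alpha}^+(F,1)$ — indeed $\vartheta^F_{g_\alpha}(D)$ need not equal $1$ in view of (\ref{bal4g}). The value $I_{g_\alpha,f}(\vartheta^F_{g_\alpha})$ is used purely as a lower bound for the infimum $w_{g_\alpha,f}(F)$, and the cases when that lower bound is actually attained (so that $\lambda_{F,f}=\vartheta^F_{g_\alpha}$) are precisely the content of the subsequent Theorems~\ref{sol-1} and \ref{sol-infin}.
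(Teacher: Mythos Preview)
Your proof is correct and follows exactly the same approach as the paper: the paper's own proof is the single sentence ``In view of the strict positive definiteness of $g_\alpha$, this follows from (\ref{reprr}),'' and you have simply spelled out the details of that deduction.
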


\begin{proof}
In view of the strict positive definiteness of $g_\alpha$, this follows from (\ref{reprr}).
\end{proof}

\subsection{A dual extremal problem} Parallel with $f=-U_{g_\alpha}^\vartheta$, we shall consider the dual external field $\tilde{f}$, given by
\begin{equation}\label{field2}
\tilde{f}:=-U_{g_\alpha}^{\vartheta^F_{g_\alpha}}.
\end{equation}
It is seen from (\ref{reprR}) that
\begin{equation*}I_{g_\alpha,f}(\mu)=I_{g_\alpha,\tilde{f}}(\mu)\quad\text{for all $\mu\in\mathcal E_{g_\alpha}^+(F)$},\end{equation*}
whence
\begin{equation}\label{dual}
w_{g_\alpha,f}(F)=w_{g_\alpha,\tilde{f}}(F):=\inf_{\mu\in\mathcal E_{g_\alpha}^+(F,1)}\,I_{g_\alpha,\tilde{f}}(\mu).
\end{equation}
Furthermore, the (original) problem (\ref{sol}) is solvable if and only if so is the (dual) problem (\ref{dual}), and in the affirmative case
\begin{equation}\label{duall}
\lambda_{F,f}=\lambda_{F,\tilde{f}},\quad c_{F,f}=c_{F,\tilde{f}}.
\end{equation}

An advantage of the dual problem (\ref{dual}) if compared with the original problem (\ref{sol}) is that $\vartheta^F_{g_\alpha}$ is of finite $g_\alpha$-energy, cf.\ (\ref{E}), and so \cite{Z-Oh} is fully applicable.

\subsection{Extremal measures}\label{sec-extr} A net $(\mu_s)\subset\mathcal E_{g_\alpha}^+(F,1)$ is said to be {\it minimizing} in problem (\ref{sol}) if
\begin{equation}\label{minn}
\lim_{s}\,I_{g_\alpha,f}(\mu_s)=w_{g_\alpha,f}(F);
\end{equation}
let $\mathbb M_{g_\alpha,f}(F)$ consist of all those $(\mu_s)$. It is clear from (\ref{wfin}) that $\mathbb M_{g_\alpha,f}(F)\ne\varnothing$.

\begin{lemma}\label{l-extr}
There exists a unique $\xi_{F,f}\in\mathcal E_{g_\alpha}^+(F)$ called the extremal measure in problem {\rm(\ref{sol})} and such that for every minimizing net $(\mu_s)\in\mathbb M_{g_\alpha,f}(F)$,
\begin{equation}\label{m-extr}
\mu_s\to\xi_{F,f}\quad\text{strongly and vaguely in $\mathcal E_{g_\alpha}^+(F)$}.
\end{equation}
This yields
\begin{equation}\label{ext-eq1}
I_{g_\alpha,f}(\xi_{F,f})=w_{g_\alpha,f}(F).
\end{equation}
\end{lemma}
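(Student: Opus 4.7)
The plan is to exploit the representation (\ref{reprr}), which states that for every $\mu\in\mathcal E_{g_\alpha}^+(F)$,
\[I_{g_\alpha,f}(\mu)=\|\mu-\vartheta^F_{g_\alpha}\|^2_{g_\alpha}-\|\vartheta^F_{g_\alpha}\|^2_{g_\alpha};\]
this reduces problem~(\ref{sol}) to minimizing the strong distance from the fixed element $\vartheta^F_{g_\alpha}\in\mathcal E^+_{g_\alpha}(F)$, cf.\ (\ref{E}), to the convex class $\mathcal E_{g_\alpha}^+(F,1)$ in the pre-Hilbert space $\mathcal E_{g_\alpha}$, which brings standard Hilbert-space projection arguments into play.

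First I would fix an arbitrary minimizing net $(\mu_s)\in\mathbb M_{g_\alpha,f}(F)$ and prove that it is strongly Cauchy. Since $\mathcal E_{g_\alpha}^+(F,1)$ is convex, the mean $\tfrac12(\mu_s+\mu_t)$ belongs to it, so the defining infimum combined with the representation above yields the key lower bound
\[\bigl\|\tfrac12(\mu_s+\mu_t)-\vartheta^F_{g_\alpha}\bigr\|^2_{g_\alpha}\geqslant w_{g_\alpha,f}(F)+\|\vartheta^F_{g_\alpha}\|^2_{g_\alpha}.\]
Applying the parallelogram identity in $\mathcal E_{g_\alpha}$ to the vectors $\mu_s-\vartheta^F_{g_\alpha}$ and $\mu_t-\vartheta^F_{g_\alpha}$, and invoking (\ref{minn}), would then force $\|\mu_s-\mu_t\|_{g_\alpha}\to 0$.

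Next I would appeal to Lemma~\ref{str-cont}: the cone $\mathcal E^+_{g_\alpha}(F)$ is strongly complete, so the Cauchy net $(\mu_s)$ converges strongly and vaguely to some $\xi_{F,f}\in\mathcal E^+_{g_\alpha}(F)$, and the strong continuity of the Gauss functional on that cone supplies $I_{g_\alpha,f}(\xi_{F,f})=\lim_s I_{g_\alpha,f}(\mu_s)=w_{g_\alpha,f}(F)$, which is exactly (\ref{ext-eq1}). To see that $\xi_{F,f}$ is \emph{independent} of the choice of minimizing net, I would rerun the same parallelogram argument on the vectors $\mu_s-\vartheta^F_{g_\alpha}$ and $\mu'_t-\vartheta^F_{g_\alpha}$ arising from two minimizing nets $(\mu_s)$ and $(\mu'_t)$; it shows $\|\mu_s-\mu'_t\|_{g_\alpha}\to 0$, which forces the two strong limits to coincide.

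The subtle point worth flagging is that $\xi_{F,f}$ is asserted only to belong to the cone $\mathcal E^+_{g_\alpha}(F)$ rather than to the affine slice $\mathcal E^+_{g_\alpha}(F,1)$. This is unavoidable: the mass condition $\mu(D)=1$ need not pass to the vague (or strong) limit when $F$ is noncompact, so $\mathcal E^+_{g_\alpha}(F,1)$ is in general not strongly closed (cf.\ Theorem~\ref{th-comp}). Consequently, the lemma stops short of producing the actual minimizer $\lambda_{F,f}$ in (\ref{sol}); upgrading $\xi_{F,f}$ to $\lambda_{F,f}$ via an argument controlling the total mass is precisely the task left to Theorems~\ref{sol-1}--\ref{sol-infin}, and is where I would expect the real difficulty of the larger program to lie.
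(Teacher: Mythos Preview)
Your proposal is correct and follows essentially the same route as the paper: a parallelogram/convexity argument to show every minimizing net is strongly Cauchy, then strong completeness of $\mathcal E^+_{g_\alpha}(F)$ and the strong continuity of $I_{g_\alpha,f}(\cdot)$ from Lemma~\ref{str-cont} to obtain (\ref{m-extr}) and (\ref{ext-eq1}). Your explicit use of the representation (\ref{reprr}) and your separate check that any two minimizing nets have the same limit are cosmetic additions; the paper leaves these as ``standard arguments'' and a reference to \cite[Proof of Lemma~4.1]{Z-Oh}.
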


\begin{proof} It follows by use of standard arguments, based on the convexity of the cone $\mathcal E_{g_\alpha}^+(F)$, the parallelogram identity in the pre-Hil\-bert space $\mathcal E_{g_\alpha}$, and the strict positive definiteness of the kernel $g_\alpha$, that any $(\mu_s)\in\mathbb M_{g_\alpha,f}(F)$ is strongly Cauchy (cf.\ \cite[Proof of Lemma~4.1]{Z-Oh}). This yields (\ref{m-extr}), the cone $\mathcal E^+_{g_\alpha}(F)$ being strongly complete in view of the perfectness of the kernel $g_\alpha$. Since $I_{g_\alpha,f}(\cdot)$ is strongly continuous on $\mathcal E^+_{g_\alpha}(F)$ (Lemma~\ref{str-cont}), (\ref{ext-eq1}) is deduced from (\ref{m-extr}) by substituting (\ref{eq-str-cont}) into (\ref{minn}).\end{proof}

Noting that the mapping $\mu\mapsto\mu(D)$ is vaguely l.s.c.\ on $\mathfrak M^+(D)$ \cite[Section~IV.1, Proposition~4]{B2}, we infer from (\ref{m-extr}) that,
in general,
\begin{equation}\label{ext-eq4}
\xi_{F,f}(D)\leqslant1.
\end{equation}

\begin{corollary}\label{l-extr5}The solution $\lambda_{F,f}$ to problem {\rm(\ref{sol})} exists if and only if equality prevails in {\rm(\ref{ext-eq4})}, i.e.
\begin{equation}\label{ext-eq2}
\xi_{F,f}(D)=1,
\end{equation}
and in the affirmative case,
\begin{equation}\label{ext-eq3}
\xi_{F,f}=\lambda_{F,f}.
\end{equation}
\end{corollary}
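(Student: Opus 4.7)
The plan is to verify each implication by directly leveraging Lemma~\ref{l-extr} (existence, uniqueness, and convergence of the extremal measure) together with the definitions of $w_{g_\alpha,f}(F)$ and of a minimizing net.

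\textbf{Sufficiency.} Suppose $\xi_{F,f}(D)=1$. Since by Lemma~\ref{l-extr} we already know $\xi_{F,f}\in\mathcal E^+_{g_\alpha}(F)$, the mass condition immediately places $\xi_{F,f}\in\mathcal E^+_{g_\alpha}(F,1)$. Then (\ref{ext-eq1}) reads $I_{g_\alpha,f}(\xi_{F,f})=w_{g_\alpha,f}(F)$, so by definition (\ref{sol}), $\xi_{F,f}$ is admissible and achieves the infimum. By the already-noted uniqueness of the minimizer of problem~(\ref{sol}) (proved in Section~\ref{sec-st}), we conclude $\xi_{F,f}=\lambda_{F,f}$.

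\textbf{Necessity.} Conversely, suppose $\lambda_{F,f}$ exists. Then the constant net $\mu_s:=\lambda_{F,f}$ lies in $\mathcal E^+_{g_\alpha}(F,1)$ and trivially satisfies
\[\lim_{s}\,I_{g_\alpha,f}(\mu_s)=I_{g_\alpha,f}(\lambda_{F,f})=w_{g_\alpha,f}(F),\]
hence $(\mu_s)\in\mathbb M_{g_\alpha,f}(F)$. By Lemma~\ref{l-extr}, this minimizing net converges strongly and vaguely to $\xi_{F,f}$. But the constant net converges (strongly and vaguely) only to $\lambda_{F,f}$, and both the strong topology on $\mathcal E_{g_\alpha}$ and the vague topology on $\mathfrak M(D)$ are Hausdorff, so $\xi_{F,f}=\lambda_{F,f}$. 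In particular, $\xi_{F,f}(D)=\lambda_{F,f}(D)=1$, as required.

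Combining the two implications gives both the equivalence $\lambda_{F,f}\text{ exists}\iff\xi_{F,f}(D)=1$ and the identity (\ref{ext-eq3}) in the affirmative case. There is no real obstacle: the entire content rests on Lemma~\ref{l-extr}, which already provides the strong/vague convergence of every minimizing net to a unique $\xi_{F,f}\in\mathcal E^+_{g_\alpha}(F)$ satisfying (\ref{ext-eq1}); the only subtle point is that $\xi_{F,f}$ need not \emph{a priori} have total mass~$1$ (the mass constraint is only preserved under vague limits as an inequality, cf.\ (\ref{ext-eq4})), so one must separately check that the equality case of this vague lower-semicontinuity is exactly what admits $\xi_{F,f}$ into the competition class.
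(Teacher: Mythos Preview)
Your proof is correct and follows essentially the same argument as the paper: sufficiency uses $\xi_{F,f}\in\mathcal E^+_{g_\alpha}(F)$ together with (\ref{ext-eq1}) to place $\xi_{F,f}$ in the admissible class as a minimizer, while necessity feeds the constant (trivial) minimizing net $(\lambda_{F,f})$ into Lemma~\ref{l-extr} and invokes the Hausdorff property of the strong topology to identify the limit. The only cosmetic difference is that the paper appeals just to the strong topology being Hausdorff, whereas you mention both topologies.
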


\begin{proof}
If (\ref{ext-eq2}) holds, then combining it with both $\xi_{F,f}\in\mathcal E_{g_\alpha}^+(F)$ and (\ref{ext-eq1}) gives (\ref{ext-eq3}). For the opposite, assume $\lambda_{F,f}$ exists. Since the trivial sequence $(\lambda_{F,f})$ is obviously minimizing, it must converge strongly to each of $\lambda_{F,f}$ and $\xi_{F,f}$ (Lemma~\ref{l-extr}), which immediately results in (\ref{ext-eq3}), the strong topology on $\mathcal E_{g_\alpha}$ being Hausdorff.
\end{proof}

\begin{lemma}\label{l-pot}
For the extremal measure $\xi:=\xi_{F,f}$, we have
\begin{equation}\label{e-pot1}
U^\xi_{g_\alpha,f}\geqslant C_\xi\quad\text{q.e.\ on $F$},
\end{equation}
where
\begin{equation}\label{Cxi}
C_\xi:=\int U^\xi_{g_\alpha,f}\,d\xi\in(-\infty,\infty).
\end{equation}
\end{lemma}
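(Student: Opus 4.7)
The plan is a standard first-order variational argument, carried out along a minimizing net $(\mu_s)\subset\mathbb M_{g_\alpha,f}(F)$ supplied by Lemma~\ref{l-extr}. I would first verify that $C_\xi$ is a finite real number: since $\xi\in\mathcal E^+_{g_\alpha}(F)$ one has $\|\xi\|_{g_\alpha}^2<\infty$, while combining (\ref{E}) with the identity (\ref{reprR}) yields
\[\int f\,d\xi=-\int U^{\vartheta^F_{g_\alpha}}_{g_\alpha}\,d\xi=-\langle\vartheta^F_{g_\alpha},\xi\rangle_{g_\alpha}\in\mathbb R,\]
so that $C_\xi=\|\xi\|_{g_\alpha}^2+\int f\,d\xi$ is indeed finite.

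For the main inequality I would argue by contradiction. Assuming (\ref{e-pot1}) fails, there exist a Borel set $E\subset F$ with $c_{g_\alpha}(E)>0$ and some $\epsilon>0$ such that $U^\xi_{g_\alpha,f}\leqslant C_\xi-\epsilon$ on $E$. By (\ref{ifff}) one can pick $\sigma\in\mathcal E^+_{g_\alpha}(E,1)$; being of finite $g_\alpha$-energy, $\sigma$ is $c_{g_\alpha}$-absolutely continuous, so $\int U^\xi_{g_\alpha,f}\,d\sigma\leqslant C_\xi-\epsilon$. For each $s$ and each $t\in(0,1]$, the convex combination $\mu_s^t:=(1-t)\mu_s+t\sigma$ lies in $\mathcal E^+_{g_\alpha}(F,1)$, hence $I_{g_\alpha,f}(\mu_s^t)\geqslant w_{g_\alpha,f}(F)$. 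A direct expansion then yields
\[2t\Bigl[\int U^{\mu_s}_{g_\alpha,f}\,d\sigma-C_{\mu_s}\Bigr]+t^2\|\sigma-\mu_s\|_{g_\alpha}^2\geqslant w_{g_\alpha,f}(F)-I_{g_\alpha,f}(\mu_s),\]
with $C_{\mu_s}:=\int U^{\mu_s}_{g_\alpha,f}\,d\mu_s$, and by (\ref{minn}) the right-hand side tends to $0$.

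I would then pass to the limit first in $s$ and afterwards in $t$. The strong convergence $\mu_s\to\xi$ together with $\sigma,\vartheta^F_{g_\alpha}\in\mathcal E^+_{g_\alpha}(F)$ and (\ref{reprR}) ensures $C_{\mu_s}\to C_\xi$ and $\int U^{\mu_s}_{g_\alpha,f}\,d\sigma\to\int U^\xi_{g_\alpha,f}\,d\sigma$, while $\|\sigma-\mu_s\|_{g_\alpha}^2$ stays bounded. Dividing the above inequality by $2t$ and letting $t\to0^+$ thus gives $\int U^\xi_{g_\alpha,f}\,d\sigma\geqslant C_\xi$, contradicting the bound $\int U^\xi_{g_\alpha,f}\,d\sigma\leqslant C_\xi-\epsilon$ secured at the outset.

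The main obstacle is ensuring that the linear functionals $\mu\mapsto\int f\,d\mu$ and $\mu\mapsto\langle\mu,\sigma\rangle_{g_\alpha}$ are continuous along the strongly convergent net, despite $\vartheta$ itself not belonging to $\mathcal E^+_{g_\alpha}$. This is precisely what the substitution $\int f\,d\mu=-\langle\vartheta^F_{g_\alpha},\mu\rangle_{g_\alpha}$ for $\mu\in\mathcal E^+_{g_\alpha}(F)$, valid in view of (\ref{reprR}) and already exploited in the derivation of (\ref{reprr}), is designed to provide.
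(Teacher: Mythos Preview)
Your argument is correct. The finiteness of $C_\xi$ is established just as in the paper, and the first–order variational step via the convex perturbation $\mu_s^t=(1-t)\mu_s+t\sigma$ is carried out properly: the expansion $I_{g_\alpha,f}(\mu_s^t)=I_{g_\alpha,f}(\mu_s)+2t\bigl[\int U^{\mu_s}_{g_\alpha,f}\,d\sigma-C_{\mu_s}\bigr]+t^2\|\sigma-\mu_s\|_{g_\alpha}^2$ is the right identity, and the passage to the limit along $s$ is justified precisely because the substitution $\int f\,d\mu=-\langle\vartheta^F_{g_\alpha},\mu\rangle_{g_\alpha}$ replaces the possibly non–$\mathcal E_{g_\alpha}$ measure $\vartheta$ by $\vartheta^F_{g_\alpha}\in\mathcal E^+_{g_\alpha}$. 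One small point worth making explicit in the write–up: to produce the set $E$ with $c_{g_\alpha}(E)>0$ and $U^\xi_{g_\alpha,f}\leqslant C_\xi-\epsilon$ on $E$, use that $U^\xi_{g_\alpha,f}\big|_F$ is l.s.c.\ (since $f|_F$ is continuous by~(\ref{dist})), so the sublevel sets are relatively closed in $F$, and then apply countable subadditivity of $c_{g_\alpha}(\cdot)$ to $\bigcup_j\{U^\xi_{g_\alpha,f}\leqslant C_\xi-1/j\}$.

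The paper proceeds differently. Rather than perturbing a minimizing net directly, it exhausts $F$ by compacta $K\in\mathfrak C_F$, observes that each $\lambda_{K,f}$ exists and satisfies the pointwise inequality~(\ref{1}) on $K$ by Theorem~\ref{th-ch2}, proves that $(\lambda_{K,f})_{K\in\mathfrak C_F}$ is itself a minimizing net, and then passes to the limit using the q.e.\ pointwise convergence $U^{\lambda_{K_j,f}}_{g_\alpha}\to U^\xi_{g_\alpha}$ along a subsequence (a consequence of strong convergence in $\mathcal E^+_{g_\alpha}$). Your route is more self–contained and avoids the pointwise–convergence machinery; the paper's route, on the other hand, establishes along the way that $(\lambda_{K,f})_{K\in\mathfrak C_F}\in\mathbb M_{g_\alpha,f}(F)$ and that $c_{K_j,f}\to C_\xi$, facts which are reused verbatim in the proof of Theorem~\ref{continuity}. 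So both arguments are valid, but the paper's choice is partly dictated by economy with the later convergence theorems.
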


\begin{proof} As noted in Section~\ref{sec-st}, problem (\ref{sol}) is (uniquely) solvable for every $K\in\mathfrak C_F$. Our first aim is to show that those solutions form a minimizing net, i.e.
\begin{equation}\label{min-net}
(\lambda_{K,f})_{K\in\mathfrak C_F}\in\mathbb M_{g_\alpha,f}(F).
\end{equation}
Since $\mathcal E^+_{g_\alpha}(K_1,1)\subset\mathcal E^+_{g_\alpha}(K_2,1)\subset\mathcal E^+_{g_\alpha}(F,1)$ for any $K_1,K_2\in\mathfrak C_F$ such that $K_1\leqslant K_2$, the net $\bigl(w_{g_\alpha,f}(K)\bigr)_{K\in\mathfrak C_F}$ decreases, and moreover
\begin{equation}\label{minS10}\lim_{K\in\mathfrak C_F}\,w_{g_\alpha,f}(K)=\lim_{K\in\mathfrak C_F}\,I_{g_\alpha,f}(\lambda_{K,f})\geqslant w_{g_\alpha,f}(F).\end{equation}
We are thus reduced to showing that
\begin{equation}\label{minS1}
w_{g_\alpha,f}(F)\geqslant\lim_{K\in\mathfrak C_F}\,w_{g_\alpha,f}(K).
\end{equation}
Applying \cite[Lemma~1.2.2]{F1} to each of the positive, l.s.c., $\mu$-int\-eg\-r\-ab\-le functions $U_{g_\alpha}^\mu$ and $U_{g_\alpha}^\vartheta$, we see that for every $\mu\in\mathcal E^+_{g_\alpha}(F,1)$,
\begin{equation}\label{minS2}I_{g_\alpha,f}(\mu)\geqslant\lim_{K\uparrow A}\,I_{g_\alpha,f}(\mu|_K).\end{equation}
(Regarding the $\mu$-int\-eg\-r\-ab\-ility of $U_{g_\alpha}^\vartheta$, see (\ref{reprR}).) Noting that $\mu(K)\uparrow\mu(F)=1$ as $K\uparrow F$, and letting $\mu$ range over $\mathcal E^+_{g_\alpha}(F,1)$, we get (\ref{minS1}) from (\ref{minS2}), whence (\ref{min-net}).

Thus, according to Lemma~\ref{l-extr},
\begin{equation}\label{ltoxip0}
\lambda_{K,f}\to\xi\quad\text{strongly and vaguely as $K\uparrow A$}.
\end{equation}
The strong topology on $\mathcal E_{g_\alpha}$ being first-countable, there exists a subsequence $(K_j)$ of the net $(K)_{K\in\mathfrak C_A}$ such that
\begin{equation}\label{ltoxip}U^{\lambda_{K_j,f}}_{g_\alpha}\to U^\xi_{g_\alpha}\quad\text{pointwise q.e.\ on $D$ as $j\to\infty$,}\end{equation}
cf.\ the paragraph following Lemma~4.3.3 in \cite{F1}. Now, applying (\ref{1}) and (\ref{cc}) to each of those $\lambda_{K_j,f}$, and then passing to the limits in the inequalities thereby obtained, we deduce (\ref{e-pot1}) from (\ref{ltoxip0}) and (\ref{ltoxip}), for
\begin{align*}\lim_{j\to\infty}\,c_{K_j,f}&=\lim_{j\to\infty}\,\Bigl(\|\lambda_{K_j,f}\|^2_{g_\alpha}-\int U^\vartheta_{g_\alpha}\,d\lambda_{K_j,f}\Bigr)=\lim_{j\to\infty}\,\Bigl(\|\lambda_{K_j,f}\|^2_{g_\alpha}-
\langle\vartheta^F_{g_\alpha},\lambda_{K_j,f}\rangle_{g_\alpha}\Bigr)\\
{}&=\|\xi\|^2_{g_\alpha}-\langle\vartheta^F_{g_\alpha},\xi\rangle_{g_\alpha}=\|\xi\|^2_{g_\alpha}-\int U^\vartheta_{g_\alpha}\,d\xi=\int U^\xi_{g_\alpha,f}\,d\xi=:C_\xi.\end{align*}
(While doing that, we have used the countable subadditivity of the outer $g_\alpha$-capacity, cf.\ \cite[Lemma~2.3.5]{F1}.)
\end{proof}

\section{Proof of Theorem~\ref{sol-1}}\label{pr-sol-1} If $\vartheta^F_{g_\alpha}(F)=1$, then $\vartheta^F_{g_\alpha}\in\mathcal E^+_{g_\alpha}(F,1)$, cf.\ (\ref{E}), and therefore $I_{g_\alpha,f}(\vartheta^F_{g_\alpha})\geqslant w_{g_\alpha,f}(F)$. Combined with (\ref{obs}), this implies the solvability of problem (\ref{sol}) as well as the first and the last relations in (\ref{eq-sol-1}). According to (\ref{bal1'}) with $\mu:=\vartheta$,
\[U_{g_\alpha}^\vartheta=U_{g_\alpha}^{\vartheta^F_{g_\alpha}}\quad\text{q.e.\ on $F$}\quad\text{(hence, $\vartheta^F_{g_\alpha}$-a.e.),}\] and substituting this equality into (\ref{cc}) with $\lambda:=\vartheta^F_{g_\alpha}$ gives $c_{F,f}=0$, whence (\ref{eq-sol-1}).

To verify the characterizations (i$_2$)--(iii$_2$) of the solution $\lambda_{F,f}$, we first show that
\begin{equation}\label{La2}
\Lambda_{F,f}=\Lambda_{F,\tilde{f}},
\end{equation}
$\Lambda_{F,\tilde{f}}$ being defined by (\ref{La}) with $f$ replaced by the dual external field $\tilde{f}$, cf.\ (\ref{field2}). In fact, $f=\tilde{f}$ q.e.\ on $F$, and so for any $\mu\in\mathcal E^+_{g_\alpha}(D)$,
\begin{equation}\label{U}
U^\mu_{g_\alpha,f}=U^\mu_{g_\alpha,\tilde{f}}\quad\text{q.e.\ on $F$}.
\end{equation}
As $c_{F,f}=c_{F,\tilde{f}}$ by (\ref{duall}), this results in (\ref{La2}) by use of the countable subadditivity of the outer $g_\alpha$-capacity, cf.\ \cite[Lemma~2.3.5]{F1}. Since, again by virtue of (\ref{duall}), $\lambda_{F,f}=\lambda_{F,\tilde{f}}$, utilizing \cite[Theorem~1.6, (i) and (ii)]{Z-Oh} with $\kappa:=g_\alpha$ and $f:=-U^{\vartheta_{g_\alpha}^F}_{g_\alpha}$ proves (ii$_2$) and (iii$_2$). (Recall that \cite[Theorem~1.6]{Z-Oh} is applicable here, for $\vartheta_{g_\alpha}^F\in\mathcal E^+_{g_\alpha}$.)

Finally, according to \cite[Theorem~1.6(iii)]{Z-Oh} applied to $\kappa:=g_\alpha$ and $f:=-U^{\vartheta_{g_\alpha}^F}_{g_\alpha}$, $\lambda_{F,\tilde{f}}$ is the unique measure in $\mathcal E^+_{g_\alpha}(D)$ such that $U_{g_\alpha,\tilde{f}}^{\lambda_{F,\tilde{f}}}=c_{F,\tilde{f}}$ q.e.\ on $F$. Therefore, employing (\ref{U}) with $\mu:=\lambda_{F,\tilde{f}}=\lambda_{F,f}$ yields the remaining assertion (i$_2$).

\section{Proof of Theorem~\ref{sol-fin}}\label{pr-sol-fin} As seen from Lemma~\ref{l-extr} and Corollary~\ref{l-extr5}, the solution $\lambda_{F,f}$  exists if and only if $\xi_{F,f}$, the extremal measure in problem (\ref{sol}), has unit total mass. Since
$\xi_{F,f}\in\mathcal E^+_{g_\alpha}(F)$ is the strong limit of any minimizing net $(\mu_s)\subset\mathcal E^+_{g_\alpha}(F,1)$ (Lemma~\ref{l-extr}), whereas $\mathcal E^+_{g_\alpha}(F,1)$ is strongly complete due to the assumption $c_{g_\alpha}(F)<\infty$
(see Theorem~\ref{th-comp}, applied to the perfect kernel $g_\alpha$), the required equality $\xi_{F,f}(F)=1$ follows.

In the rest of this proof, $\vartheta^F_{g_\alpha}(F)\leqslant1$. Then
\begin{equation}\label{leq}
\eta_{F,f}:=\frac{1-\vartheta^F_{g_\alpha}(F)}{c_{g_\alpha}(F)}\in[0,\infty),
\end{equation}
whence
\begin{equation}\label{La3}
\lambda:=\vartheta^F_{g_\alpha}+\eta_{F,f}\gamma_{F,g_\alpha}\in\mathcal E^+_{g_\alpha}(F,1).
\end{equation}
Moreover,
\[U^\lambda_{g_\alpha,f}=U^\lambda_{g_\alpha}+f=
\bigl(U_{g_\alpha}^{\vartheta^F_{g_\alpha}}-U_{g_\alpha}^\vartheta\bigr)+\eta_{F,f}U^{\gamma_{F,g_\alpha}}_{g_\alpha}=
\eta_{F,f}\quad\text{q.e.\ on $F$},\]
the last equality being derived from (\ref{eqpot}) and (\ref{bal1'}) by use of the countable subadditivity of the outer capacity. By virtue of Theorem~\ref{th-ch2}, we thus have
\[\lambda=\lambda_{F,f}\quad\text{and}\quad\eta_{F,f}=c_{F,f},\] which substituted into (\ref{leq}) and (\ref{La3}) proves (\ref{eta}) and (\ref{repr}), respectively.

Noting that under the accepted assumptions $c_{g_\alpha}(F)<\infty$ and $\vartheta^F_{g_\alpha}(F)\leqslant1$, \cite[Theorem~1.6]{Z-Oh} is fully applicable to $\kappa:=g_\alpha$ and $f:=-U^{\vartheta_{g_\alpha}^F}_{g_\alpha}$, we arrive at (i$_2$)--(iii$_2$) in the same manner as in the last two paragraphs of the preceding Section~\ref{pr-sol-1}.

\section{Proof of Theorem~\ref{sol-infin}}\label{pr-sol-infin} Assume that (\ref{necsuf}) holds, and that either $\alpha<2$ or $\Omega:=D\setminus F$ is connected. Then, by virtue of Lemma~\ref{capinf},
\begin{equation}\label{capinfeq}
c_{g_\alpha}(F)=\infty.
\end{equation}
Furthermore, according to Theorem~\ref{th-balM2}, $\vartheta(D)=\vartheta_{g_\alpha}^F(D)$,
and hence, in view of Theorem~\ref{sol-1}, we are reduced to the case where $\vartheta(D)\ne1$, or equivalently
\[\vartheta_{g_\alpha}^F(D)\ne1.\]

Suppose first that $q:=\vartheta_{g_\alpha}^F(D)<1$. The locally compact space $D$ being $\sigma$-com\-pact, there is a sequence of relatively compact, open sets $U_j$ with the union $D$ and such that ${\rm Cl}_D\,U_j\subset U_{j+1}$, see \cite[Section~I.9, Proposition~15]{B1}. Because of (\ref{capinfeq}), $c_{g_\alpha}(F\setminus U_j)=\infty$, $j\in\mathbb N$,\footnote{This follows from the subadditivity of the outer $g_\alpha$-capacity by use of the fact that, due to the strict positive definiteness of the kernel $g_\alpha$, the $g_\alpha$-capacity of any compact set $K\subset D$ is finite.}
and hence one can choose $(\tau_j)\subset\mathcal E^+_{g_\alpha}(F,1)$ such that
\begin{equation}\label{prop}
S(\tau_j;D)\subset D\setminus{\rm Cl}_D\,U_j,\quad\|\tau_j\|_{g_\alpha}<1/j.
\end{equation}
Define
\begin{equation}\label{muj}\mu_j:=\vartheta_{g_\alpha}^F+(1-q)\tau_j.\end{equation}
Clearly, $\mu_j\in\mathcal E^+_{g_\alpha}(F,1)$ for all $j$, and therefore, on account of Corollary~\ref{Cor},
\begin{align*}-\|\vartheta^F_{g_\alpha}\|^2_{g_\alpha}&\leqslant w_{g_\alpha,f}(F)\leqslant\lim_{j\to\infty}\,I_{g_\alpha,f}(\mu_j)=\lim_{j\to\infty}\,\Bigl(\|\mu_j\|^2_{g_\alpha}-2\int U^\vartheta_{g_\alpha}\,d\mu_j\Bigr)\\
{}&=\lim_{j\to\infty}\,\Bigl(\|\mu_j\|^2_{g_\alpha}-2\bigl\langle\vartheta_{g_\alpha}^F,\mu_j\bigr\rangle_{g_\alpha}\Bigr)=
\|\vartheta^F_{g_\alpha}\|^2_{g_\alpha}-2\|\vartheta^F_{g_\alpha}\|^2_{g_\alpha}=-\|\vartheta^F_{g_\alpha}\|^2_{g_\alpha},\end{align*}
the last but one equality being obtained from (\ref{muj}) and $\|\tau_j\|_{g_\alpha}<1/j$ by making use of the Cauchy--Schwarz (Bunyakovski) inequality. This implies that the sequence $(\mu_j)$ is minimizing in problem (\ref{sol}), i.e.\ $(\mu_j)\in\mathbb M_{g_\alpha,f}(F)$, and hence it converges strongly and vaguely in $\mathcal E^+_{g_\alpha}(F)$ to the (unique) extremal measure
$\xi:=\xi_{F,f}$ (Lemma~\ref{l-extr}). Since any compact subset of $D$ is contained in $U_j$ for all $j$ large enough, $\tau_j\to0$ vaguely in consequence of the former relation in (\ref{prop}), whence $\xi=\vartheta^F_{g_\alpha}$. Thus $\xi(D)=q<1$, and applying Corollary~\ref{l-extr5} shows that problem (\ref{sol}) is indeed unsolvable.

It thus remains to analyze the case where $\vartheta_{g_\alpha}^F(D)>1$. Assume first that $C_\xi\geqslant0$, $C_\xi$ being introduced by means of (\ref{Cxi}). Then, according to (\ref{e-pot1}),
\[U^\xi_{g_\alpha}\geqslant C_\xi+U^\vartheta_{g_\alpha}\quad\text{q.e.\ on $F$},\]                                                                                                                           whence
\[U^\xi_{g_\alpha}\geqslant U^{\vartheta^F_{g_\alpha}}_{g_\alpha}\quad\text{q.e.\ on $F$}.\]
Applying to $\xi,\vartheta^F_{g_\alpha}\in\mathcal E^+_{g_\alpha}(F)$ the refined principle of positivity of mass for $g_\alpha$-potentials as stated in \cite[Theorem~5.1]{Z-AMP}, which is possible due to (\ref{necsuf}), we therefore get
\[\xi(D)\geqslant\vartheta_{g_\alpha}^F(D)>1,\]
which however contradicts (\ref{ext-eq4}). Thus necessarily
\begin{equation}\label{Cxi2}
C_\xi<0.
\end{equation}

Integrating (\ref{e-pot1}) with respect to $\xi$, and then substituting (\ref{Cxi}) into the inequality thereby obtained, we have
\[C_\xi=\int U^\xi_{g_\alpha,f}\,d\xi\geqslant C_\xi\cdot\xi(D),\]
whence $\xi(D)\geqslant1$ in view of (\ref{Cxi2}). Combined with (\ref{ext-eq4}) this implies that, actually, $\xi(D)=1$, and an application of Corollary~\ref{l-extr5} then shows that problem (\ref{sol}) is indeed solvable with $\xi$ serving as the solution $\lambda_{F,f}$. The proof is complete.

\section{Proof of Theorem~\ref{sup-desc}}\label{pr-sup-desc} Unless $\alpha<2$, assume $\Omega$ is connected. Also assume that either
(\ref{C1}) or (\ref{C2}) is fulfilled. Then according to Theorems~\ref{sol-1} and \ref{sol-fin}, the solution $\lambda_{F,f}$ to problem (\ref{sol}) does exist, and moreover
\begin{equation}\label{RRR''}\lambda_{F,f}=\left\{
\begin{array}{cl}\vartheta^F_{g_\alpha}&\text{if \ (\ref{C1}) holds},\\
\vartheta^F_{g_\alpha}+c_{F,f}\gamma_{F,g_\alpha}&\text{otherwise},\\ \end{array} \right.
\end{equation}
where $c_{F,f}$, the $f$-weighted equilibrium constant for the set $F$, is ${}\geqslant0$.

On account of (\ref{bal2}), we conclude from \cite[Theorem~7.2]{Z-bal}, providing a description of the support $S(\mu^Q_{\kappa_\alpha};\mathbb R^n)$, where $Q\subset\mathbb R^n$ is closed and $\mu\in\mathfrak M^+(\mathbb R^n)$, that
\begin{equation}\label{desc1}S(\vartheta^F_{g_\alpha};D)=\left\{
\begin{array}{cl}\check{F}&\text{if \ $\alpha<2$},\\
\partial_D\check{F}&\text{otherwise}.\\ \end{array} \right.
\end{equation}
In case $c_{g_\alpha}(F)<\infty$, the same formula (\ref{desc1}) holds true for the $g_\alpha$-equilibrium measure $\gamma_{F,g_\alpha}$ in place of $\vartheta^F_{g_\alpha}$, which can be seen with the aid of arguments similar to those in \cite[Proof of Theorem~7.2]{Z-bal}, now applied to the functions
$U_{\kappa_\alpha}^{\gamma_{F,g_\alpha}}$ and $1+U_{\kappa_\alpha}^{(\gamma_{F,g_\alpha})^Y_{\kappa_\alpha}}$,  $\alpha$-superharmonic and $\alpha$-harmonic on $D$, respectively. While doing that, we utilize a description of the $g_\alpha$-equilibrium potential $U^{\gamma_{F,g_\alpha}}_{g_\alpha}$, provided by \cite[Lemma~6.5]{Z-AMP}, as well as the representation (\ref{g1}) for $g_\alpha$-potentials of extendible measures on $D$.

In view of (\ref{RRR''}), all this results in (\ref{RRR}), thereby completing the proof.

\section{Proof of Theorem~\ref{continuity}}\label{pr-cont} As noticed in Section~\ref{sec-st}, problem (\ref{sol}) is (uniquely) solvable for every $K\in\mathfrak C_F$. Moreover, it is shown in the proof of Lemma~\ref{l-pot}, see (\ref{min-net}), that those solutions form a minimizing net, i.e.\ $(\lambda_{K,f})_{K\in\mathfrak C_F}\in\mathbb M_{g_\alpha,f}(F)$. Thus, by virtue of Lemma~\ref{l-extr},
\begin{equation}\label{m-extrKF}
\lambda_{K,f}\to\xi\quad\text{strongly and vaguely in $\mathcal E_{g_\alpha}^+(F)$ as $K\uparrow F$},
\end{equation}
$\xi:=\xi_{F,f}$ being the extremal measure. On the other hand, problem (\ref{sol}) is assumed to be solvable; therefore, according to Corollary~\ref{l-extr5},
\begin{equation}\label{xiL}
\xi=\lambda_{F,f},
\end{equation}
which substituted into (\ref{m-extrKF}) leads to (\ref{Cont}).

In the same manner as in the proof of (\ref{ltoxip}), we deduce from (\ref{m-extrKF}) that there exists a subsequence $(K_j)$ of the net $(K)_{K\in\mathfrak C_A}$ such that
\[U^{\lambda_{K_j,f}}_{g_\alpha}\to U^\xi_{g_\alpha}\quad\text{pointwise q.e.\ on $D$ as $j\to\infty$,}\]
which combined with (\ref{xiL}) results in (\ref{Cont2}).

It follows from (\ref{cc}) that
\[c_{K,f}=\int U_{g_\alpha,f}^{\lambda_{K,f}}\,d\lambda_{K,f}=\|\lambda_{K,f}\|^2_{g_\alpha}+\int U_{g_\alpha}^\vartheta\,d\lambda_{K,f}=
\|\lambda_{K,f}\|^2_{g_\alpha}+\bigl\langle\vartheta^F_{g_\alpha},\lambda_{K,f}\bigr\rangle_{g_\alpha},\]
and similarly
\[c_{F,f}=
\|\lambda_{F,f}\|^2_{g_\alpha}+\bigl\langle\vartheta^F_{g_\alpha},\lambda_{F,f}\bigr\rangle_{g_\alpha}.\]
By the strong convergence of $(\lambda_{K,f})_{K\in\mathfrak C_F}$ to $\lambda_{F,f}$, this yields (\ref{Cont3}).

To complete the proof, assume now that $\vartheta^F_{g_\alpha}(F)\leqslant1$. For any two relatively closed $F_1,F_2\subset D$ such that $F_1\subset F_2$, and any bounded $\mu\in\mathfrak M^+(D\setminus F_2;D)$, we have \[\mu_{g_\alpha}^{F_1}=\bigl(\mu_{g_\alpha}^{F_2}\bigr)_{g_\alpha}^{F_1}\]
(balayage "with a rest", see \cite[Corollary~2.5]{Z-AMP}), and therefore, by virtue of (\ref{bal4g}),
\[\mu_{g_\alpha}^{F_1}(D)\leqslant\mu_{g_\alpha}^{F_2}(D).\]
Thus the net $\bigl(\vartheta^K_{g_\alpha}(D)\bigr)_{K\in\mathfrak C_F}$ increases and does not exceed $\vartheta^F_{g_\alpha}(D)$  $\bigl({}\leqslant1\bigr)$, which implies that Theorem~\ref{sol-fin} is applicable to any $K\in\mathfrak C_F$. This gives
\[c_{K,f}=\frac{1-\vartheta^K_{g_\alpha}(D)}{c_{g_\alpha}(K)},\]
and so $(c_{K,f})_{K\in\mathfrak C_F}$ decreases. In view of (\ref{Cont3}), we get (\ref{Cont3'}), whence the theorem.

\section{Proof of Theorem~\ref{continuity2}}\label{pr-cont2}

We first observe from the monotonicity of the net $(F_s)_{s\in S}$ that $\bigl(w_{g_\alpha,f}(F_s)\bigr)_{s\in S}$ is bounded and increasing, and moreover
\begin{equation}\label{lp1}
 \lim_{s\in S}\,w_{g_\alpha,f}(F_s)\leqslant w_{g_\alpha,f}(F).
\end{equation}

Since both $c_{g_\alpha}(F_{s_1})$ and $c_{g_\alpha}(F)$ are nonzero and finite, so are $c_{g_\alpha}(F_s)$ for all $s\geqslant s_0$, whence the minimizers $\lambda_{F_s,f}$ do exist (Theorem~\ref{sol-fin}). Furthermore, for any $t\geqslant s_0$,
\[(\lambda_{F_s,f})_{s\geqslant t}\subset\mathcal E^+_{g_\alpha}(F_t,1).\]
By the convexity of $\mathcal E^+_{g_\alpha}(F_t,1)$, $(\lambda_{F_t,f}+\lambda_{F_s,f})/2\in\mathcal E^+_{g_\alpha}(F_t,1)$ for all  $s\geqslant t$, hence
\[4w_{g_\alpha,f}(F_t)\leqslant4I_{g_\alpha,f}\biggl(\frac{\lambda_{F_t,f}+\lambda_{F_s,f}}{2}\biggr)=
\|\lambda_{F_t,f}+\lambda_{F_s,f}\|_{g_\alpha}^2+4\int f\,d(\lambda_{F_t,f}+\lambda_{F_s,f}).\]
Using the parallelogram identity in the pre-Hilbert space $\mathcal E_{g_\alpha}$ therefore gives
\begin{align*}
0\leqslant\|\lambda_{F_t,f}-\lambda_{F_s,f}\|_{g_\alpha}^2&\leqslant-4w_{g_\alpha,f}(F_t)+2I_{g_\alpha,f}(\lambda_{F_t,f})+
2I_{g_\alpha,f}(\lambda_{F_s,f})\\{}&=2w_{g_\alpha,f}(F_s)-2w_{g_\alpha,f}(F_t),
\end{align*}
whence the net $(\lambda_{F_s,f})_{s\geqslant t}\subset\mathcal E^+_{g_\alpha}(F_t,1)$ is strongly Cauchy. Since $\mathcal E^+_{g_\alpha}(F_t,1)$ is complete in the (induced) strong topology (cf.\ Theorem~\ref{th-comp} with $\kappa:=g_\alpha$),  $(\lambda_{F_s,f})_{s\geqslant t}$ converges strongly (hence vaguely, the kernel $g_\alpha$ being perfect) to some unique $\lambda\in\mathcal E^+_{g_\alpha}(F_t,1)$. Noting that this holds for each $t\geqslant s_0$, we thus get $\lambda\in\mathcal E^+_{g_\alpha}(F,1)$, and therefore
\begin{equation}\label{Lim}
\lim_{s\in S}\,w_{g_\alpha,f}(F_s)=\lim_{s\in S}\,I_{g_\alpha,f}(\lambda_{F_s,f})=I_{g_\alpha,f}(\lambda)\geqslant w_{g_\alpha,f}(F),
\end{equation}
the latter equality being derived from the strong continuity of $I_{g_\alpha,f}(\cdot)$ on $\mathcal E_{g_\alpha}(F_{s_1})$ (Lemma~\ref{str-cont}). Combining (\ref{Lim}) with (\ref{lp1}) proves (\ref{wup}) as well as $\lambda=\lambda_{F,f}$.

It has thus been shown that $\lambda_{F_s,f}\to\lambda_{F,f}$ strongly and vaguely in $\mathcal E_{g_\alpha}^+$ as $s$ ranges through $S$. The rest of the proof runs in a way similar to that in the proof of Theorem~\ref{continuity} (see the last three paragraphs in the preceding section).

\section{Proof of Theorem~\ref{th-ex}}\label{th-ex-pr}

To begin with, we first observe that under the requirements of Theorem~\ref{th-ex}, the set $F$ must be unbounded in $\mathbb R^n$, whence the following lemma holds true.

\begin{lemma}\label{l-inf}We have
\begin{equation}\label{eq-l-inf}
\lim_{|x|\to\infty, \ x\in F}\,U^\vartheta_{\kappa_\alpha}(x)=0.
\end{equation}
\end{lemma}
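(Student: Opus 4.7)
The plan is to reduce the statement to a routine application of the dominated convergence theorem, using two structural facts that are already in hand: the positive distance between $S(\vartheta;D)$ and $F$ asserted in (\ref{dist}), and the boundedness of $\vartheta$.

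First I would justify the assertion preceding the lemma, namely that $F$ is unbounded in $\mathbb R^n$. This is immediate from the definition of $\alpha$-thinness at infinity in Section~\ref{sec-thin}: for a bounded set $Q\subset\mathbb R^n$, all but finitely many of the shells $Q_j:=Q\cap\{y:q^j<|y|\leqslant q^{j+1}\}$ are empty, so the defining series trivially converges. Hence the hypothesis of Example~\ref{ex1} that $F$ be not $\alpha$-thin at infinity forces $F$ to be unbounded, and the statement $|x|\to\infty$, $x\in F$, is not vacuous.

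Next I would exploit (\ref{dist}). For every $x\in F$ and every $y\in S(\vartheta;D)$ we have $|x-y|\geqslant\varrho>0$, and so
\[\kappa_\alpha(x,y)=|x-y|^{\alpha-n}\leqslant\varrho^{\alpha-n}.\]
Since $\vartheta(D)<\infty$, the constant function $\varrho^{\alpha-n}$ serves as a $\vartheta$-integrable majorant of $y\mapsto\kappa_\alpha(x,y)$ that is uniform in $x\in F$. On the other hand, for any sequence $(x_k)\subset F$ with $|x_k|\to\infty$ and for every fixed $y\in\mathbb R^n$ we have $|x_k-y|\geqslant|x_k|-|y|\to\infty$, whence $\kappa_\alpha(x_k,y)\to0$ pointwise. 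Applying Lebesgue's dominated convergence theorem to $U^\vartheta_{\kappa_\alpha}(x_k)=\int\kappa_\alpha(x_k,y)\,d\vartheta(y)$ yields (\ref{eq-l-inf}). I would phrase the conclusion sequentially, which is sufficient since $\mathbb R^n$ is metrizable.

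There is no real obstacle here; the only mild point is to record the step showing $F$ is unbounded, so that the limit $|x|\to\infty$ in $F$ has genuine content. All other ingredients (the uniform lower bound $|x-y|\geqslant\varrho$, the finiteness of $\vartheta(D)$, and $\alpha<n$) are explicitly available from the standing hypotheses of Section~\ref{sec-st} and the assumption $\alpha\in(0,n)$.
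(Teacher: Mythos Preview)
Your argument is correct. Both the paper and you rely on the same two ingredients~--- the uniform lower bound $|x-y|\geqslant\varrho$ for $x\in F$, $y\in S(\vartheta;D)$, and the finiteness of $\vartheta(D)$~--- but package them differently. The paper performs a manual tail/bulk split: it chooses a neighborhood $U$ of $\infty_{\mathbb R^n}$ with $\vartheta(U)<\varepsilon\varrho^{n-\alpha}$ so that $\int_U\kappa_\alpha(x,\cdot)\,d\vartheta<\varepsilon$ uniformly in $x\in F$, and then handles the compactly supported remainder $\vartheta-\vartheta|_U$ by taking $|x|$ large. Your route via dominated convergence with the global majorant $\varrho^{\alpha-n}\in L^1(\vartheta)$ collapses this two-step estimate into a single invocation of a standard theorem, which is cleaner; the paper's explicit split, on the other hand, avoids appealing to any measure-theoretic machinery and makes the uniformity in $x$ completely transparent. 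Neither approach offers a genuine advantage in generality here.
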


\begin{proof} This only needs to be verified when $S(\vartheta;D)$ is unbounded in $\mathbb R^n$. Since $\vartheta(D)$ is finite, for any $\varepsilon\in(0,\infty)$, there is an open neighborhood $U$ of $\infty_{\mathbb R^n}$ such that
\[\vartheta(U)<\varepsilon\varrho^{n-\alpha},\]
$\varrho\in(0,\infty)$ being defined by means of (\ref{dist}), whence
\begin{equation}\label{eq-l-inf1}
\int\kappa_\alpha(x,y)\,d\vartheta|_U(y)<\varepsilon\quad\text{for all $x\in F$}.
\end{equation}
On the other hand, one can choose a neighborhood $U'$ of $\infty_{\mathbb R^n}$ so that $U'\subset U$ and
\[\int\kappa_\alpha(x,y)\,d(\vartheta-\vartheta|_U)(y)<\varepsilon\quad\text{for all $x\in U'$},
\]
which together with (\ref{eq-l-inf1}) proves (\ref{eq-l-inf}).
\end{proof}

We further note that, since $c_{\kappa_\alpha}(Y)=0$ while
$F$ is not $\alpha$-thin at infinity,
\begin{equation}\label{harm1}\omega_\alpha(x,\overline{\mathbb R^n}\setminus D;\Omega)=0\quad\text{for all $x\in\Omega$},\end{equation}
cf.\ definitions (\ref{def-h}), (\ref{Def}) and Corollary~\ref{cor-infty}. Therefore, according to Theorem~\ref{sol-infin}, the minimizer $\lambda_{F,f}$ exists if and only if $\vartheta(D)\geqslant1$. Furthermore, if $\vartheta(D)=1$, then, in consequence of (\ref{harm1}), $\vartheta^F_{g_\alpha}(D)=1$ (see Theorem~\ref{th-balM2}), and applying Theorem~\ref{sup-desc} shows that a description of $S(\lambda_{F,f};D)$ is given, indeed, by means of (\ref{RRR}).

It remains to analyze the case $\vartheta(D)>1$. Assume, to the contrary, that $S(\lambda_{F,f};D)$ is unbounded, and so there is a sequence $(x_j)\subset S(\lambda_{F,f};D)\subset F$ approaching $\infty_{\mathbb R^n}$. Noting that $U_{g_\alpha}^\vartheta\bigl|_F$ is continuous while $U_{g_\alpha}^{\lambda_{F,f}}\bigl|_F$ is l.s.c., we deduce from (\ref{2}) that
\begin{equation}\label{desc7}
U_{g_\alpha,f}^{\lambda_{F,f}}(x_j)=U_{g_\alpha}^{\lambda_{F,f}}(x_j)-U_{g_\alpha}^\vartheta(x_j)\leqslant c_{F,f}\quad\text{for all $j$},
\end{equation}
$c_{F,f}$ being the $f$-weighted equilibrium constant. According to Corollary~\ref{l-extr5},
\[\lambda_{F,f}=\xi_{F,f}=:\xi,\]
where $\xi_{F,f}$ is the extremal measure, introduced in Lemma~\ref{l-extr}. Comparing (\ref{cc}) and (\ref{Cxi}) therefore gives
$c_{F,f}=C_\xi$, whence, by virtue of (\ref{Cxi2}),
\[c_{F,f}<0.\]
Substituting this into (\ref{desc7}) and then letting $j\to\infty$, we see from Lemma~\ref{l-inf} that
\[\liminf_{x\to\infty_{\mathbb R^n},\ x\in F}\,U_{\kappa_\alpha}^{\lambda_{F,f}}(x)=\liminf_{x\to\infty_{\mathbb R^n},\ x\in F}\,U_{g_\alpha}^{\lambda_{F,f}}(x)<0,\]
the equality being valid since $g^\alpha_D(x,y)=\kappa_\alpha(x,y)$ for all $(x,y)\in D\times\mathbb R^n$ because of the assumption $c_{\kappa_\alpha}(Y)=0$. This, however, contradicts \cite[Remark~4.12]{KM}, the set $F$ not being $\alpha$-thin at infinity. The proof is complete.

\section{Acknowledgements} The author thanks Krzysztof Bogdan, Vladimir Eiderman, and Artur Rutkowski for useful discussions on the content of the paper.

\section{A data availability statement} This manuscript has no associated data.

\section{Funding and Competing interests} The author has no relevant financial or non-financial interests to disclose.

\end{document}